\newcommand\Ex{{\mathbb E}}
\newcommand\Prob{{\mathbb P}}
\newcommand\Normal{{\mathcal N}}
\newcommand\cA{{\mathcal A}}
\newcommand\cB{{\mathcal B}}
\newcommand\cC{{\mathcal C}}
\newcommand\cF{{\mathcal F}}
\newcommand\cK{{\mathcal K}}
\newcommand\cP{{\mathcal P}}
\newcommand\cV{{\mathcal V}}
\newcommand\cW{{\mathcal W}}
\newcommand\cX{{\mathcal X}}
\newcommand\cO{{\mathcal O}}
\newcommand\cE{{\mathcal E}}
\newcommand\N{{\mathbb N}}
\newcommand\Q{{\mathbb Q}}
\newcommand\Z{{\mathbb Z}}
\newcommand\R{{\mathbb R}}
\newcommand\E{{\mathbb E}}
\newcommand\I{{\mathbb I}}
\newcommand\Fi{{\mathbf F}}
\newcommand\bM{{\mathbf M}}
\newcommand\bN{{\mathbf N}}
\newcommand\bS{{\mathbf S}}
\newcommand\bX{{\mathbf X}}
\newcommand\dto{\overset{d}{\to }}
\newcommand\Pto{\overset{\Prob}{\to}}
\newcommand\hatPto{\overset{\hat \Prob}{\to }}
\newcommand\zero{{o}}
\newcommand\one{{\bf 1}}
\newcommand\lr[1]{\langle #1 \rangle}
\DeclareMathOperator{\Image}{Im}
\DeclareMathOperator{\Var}{Var}
\DeclareMathOperator{\Cov}{Cov}
\DeclareMathOperator{\rank}{rank}
\newtheorem{theorem}{Theorem}[section]
\newtheorem{corollary}[theorem]{Corollary}
\newtheorem{lemma}[theorem]{Lemma}
\newtheorem{proposition}[theorem]{Proposition}
\theoremstyle{definition}
\newtheorem{definition}[theorem]{Definition}
\theoremstyle{remark}
\newtheorem{remark}[theorem]{Remark}
\title{Random connection models in the thermodynamic regime: central limit theorems for add-one cost stabilizing functionals}
\author{Van Hao Can
\footnote{Department of Statistics and Applied Probability, National University of Singapore, and Institute of Mathematics, Vietnam Academy of Science and Technology.
\newline
Email: cvhao89@gmail.com
}		
\and 
Khanh Duy Trinh
\footnote{Global Center for Science and Engineering, Waseda University, Japan.
\newline
Email: trinh@aoni.waseda.jp 
} 
}
\begin{document}
\maketitle

\begin{abstract}
The paper deals with a random connection model, a random graph whose vertices are given by a homogeneous Poisson point process on $\R^d$, and edges are independently drawn with probability depending on the locations of the two end points. We establish central limit theorems (CLT) for general functionals on this graph under minimal assumptions that are a combination of  the weak stabilization for the-one cost and a $(2+\delta)$-moment condition. As a consequence, CLTs for  isomorphic subgraph counts, isomorphic component counts, the number of connected components are then derived. In addition, CLTs for Betti numbers and the size of biggest component are also proved for  the first time.

\medskip

	\noindent{\bf Keywords:} random connection model ; central limit theorem ; weak stabilization ; clique complex ; Betti numbers
	
\medskip
	
	\noindent{\bf AMS Subject Classification:} Primary 60F05 ; 60D05

\end{abstract}

\section{Introduction}

Given a configuration $\cP$ of a homogeneous Poisson point process on the $d$-dimensional Euclidean space $\R^d$ with density $\lambda > 0$, and a measurable symmetric connection function $\varphi \colon \R^d \to [0,1]$, connect any two distinct points $x, y \in \cP$ with probability $\varphi(x - y)$ independently of the other pairs. The resulting random graph $G_\varphi(\cP)$ is called a random connection model (RCM) with parameters $(\lambda, \varphi)$. For a special choice of $\varphi$ that $ \varphi(x) = \I(|x| \le r)$, where $\I$ is the indicator function and $|x|$ is the Euclidean norm of $x$, $G_\varphi(\cP)$ becomes a random geometric graph, where two vertices are connected, if their distance is less than or equal to the threshold $r > 0$. Figure~\ref{f1} illustrates a RCM and a random geometric graph built on the same set of vertices. RCMs, including general models where a point process is taken in an abstract space, have been known as a very useful model with many applications in physics, epidemiology and telecommunications, see e.g.\ \cite{H1}. Therefore, they have gained a great interest from many scientists in different branches of science \cite{H4, H2,H3}. In particular, for the mathematical side, problems such as connectivity, diameter,  degree counts and the number of connected components have been studied \cite{H5, Giles-2016, Iyer-2018, Last-Nestmann-Schulte-2018}.

\begin{figure}
\centering
\begin{subfigure}[b]{0.45\textwidth}
\fbox{\includegraphics[width=\textwidth]{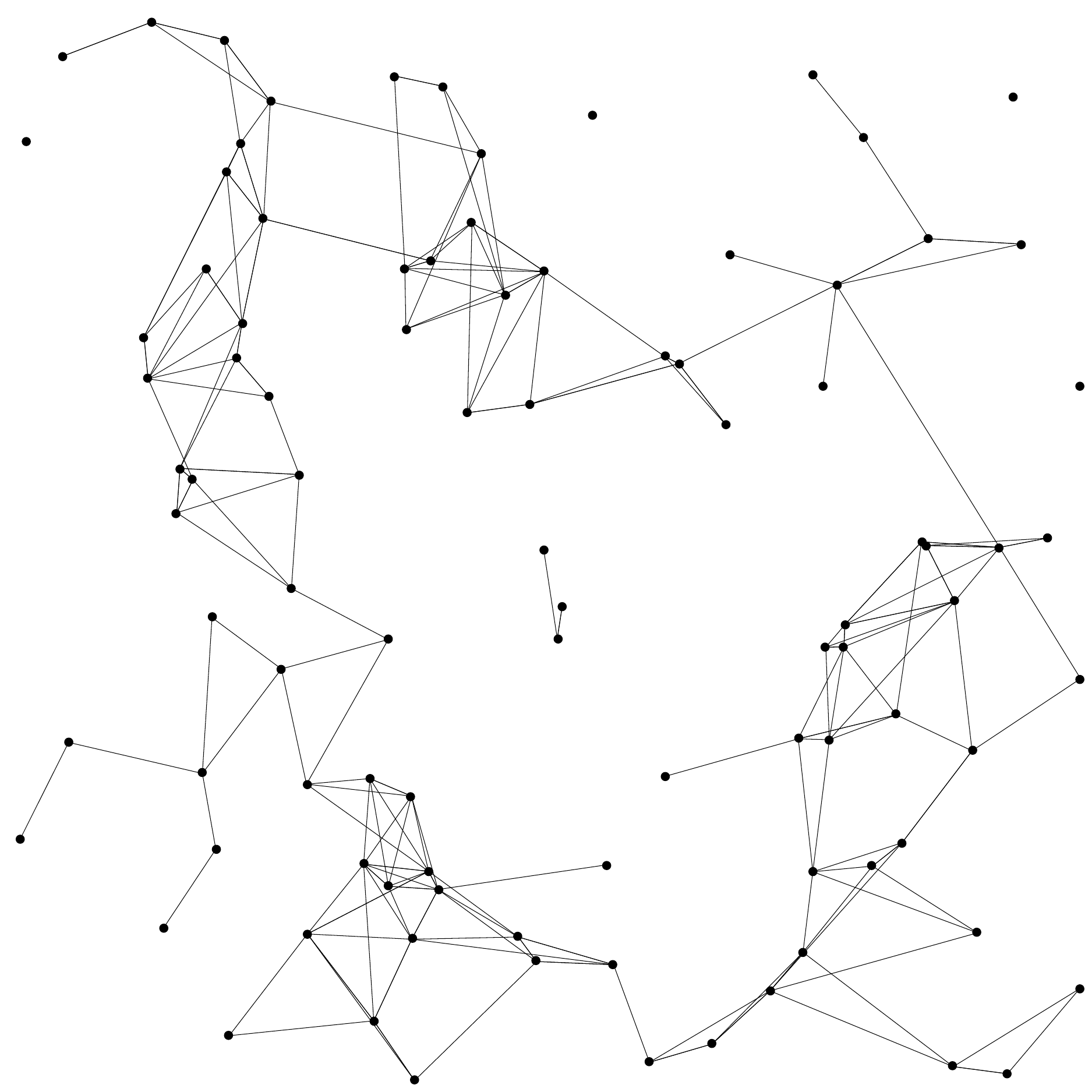}}
\caption{Random connection model}
\end{subfigure}
\quad
\begin{subfigure}[b]{0.45\textwidth}
\fbox{\includegraphics[width=\textwidth]{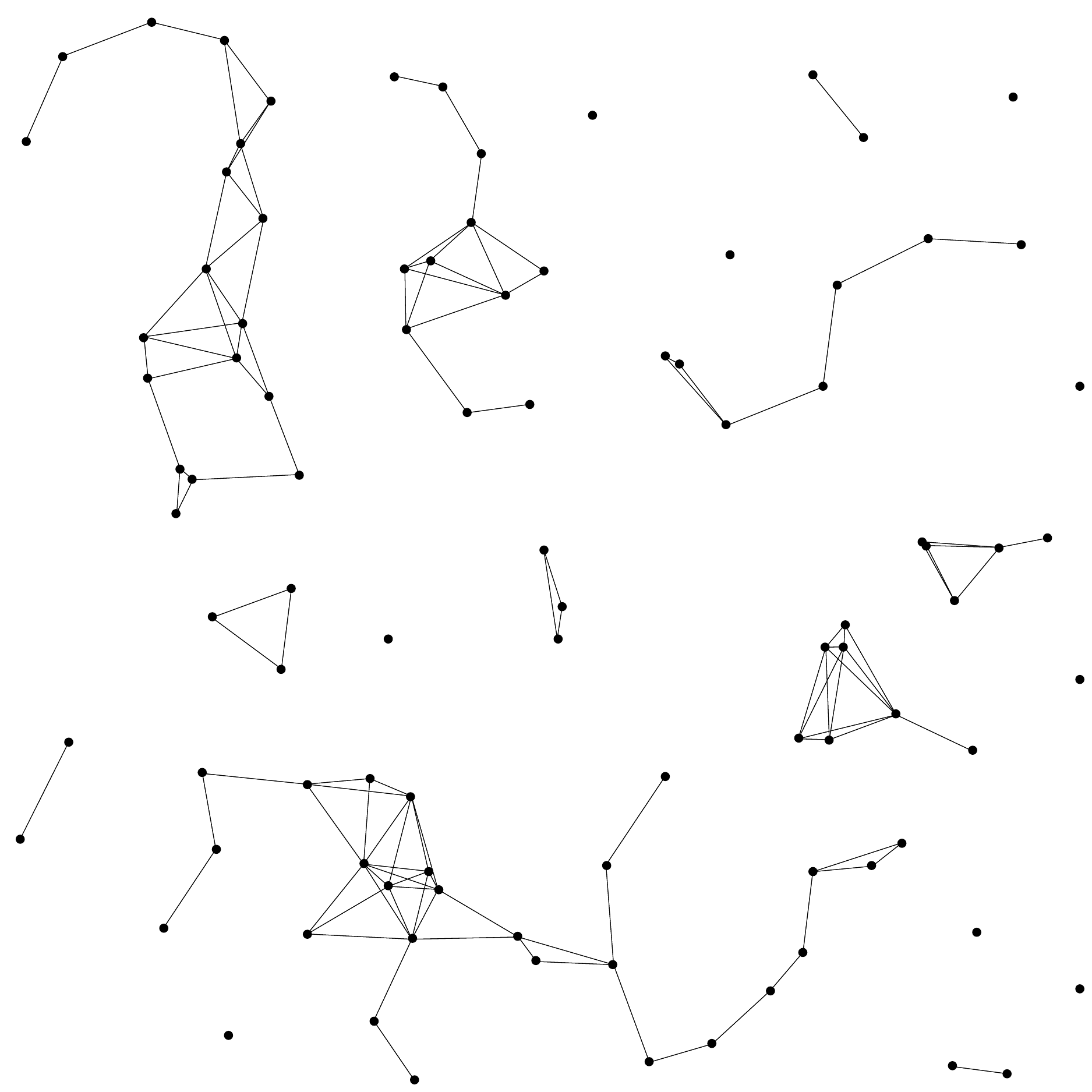}}
\caption{Random geometric graph}
\end{subfigure}
\caption{Illustration of (a) a random connection model with $\lambda = 1$ and $\varphi (x) = e^{-|x|^2}$ restricted on the rectangle $[-5,5]^2$, and (b) a random geometric graph on the same set of vertices with $r = 1$.}
\label{f1} 
\end{figure}

In this paper, we focus  on studying the  asymptotic behavior of general functionals on the RCM for fixed parameters $(\lambda, \varphi)$.  Assume that $f$ is a functional defined on finite graphs. For a bounded window $W \subset \R^d$, let $G_\varphi(\cP)|_W$ be the restriction of the random graph $G_\varphi(\cP)$ on the set of vertices lying in $W$. Our aim is to establish a central limit theorem (CLT) for $f(G_\varphi(\cP)|_W)$ as the window $W$ tends to $\R^d$. The result here generalizes CLTs in \cite{PY-2001, Trinh-2019} for stabilizing functionals on homogeneous Poisson point processes. We first extend the concept of weakly stabilization which is original from \cite{PY-2001} to this setting. Then a CLT holds under assumptions that the functional is weakly stabilizing and satisfies a moment condition. Our result should be a counterpart to a general result on normal approximation in \cite{Last-Nestmann-Schulte-2018}.

Let us introduce the result in more details. Let $G(\cP \cup \{\zero\})$ be a random graph obtained from $G(\cP) = G_\varphi(\cP)$ by adding the origin $\{\zero\}$ and edges $\{\zero, x\}, x \in \cP$ independently with probability $\varphi(x)$. We define the add-one cost of $f$ as 
\[
	D_\zero f(W) := f(G(\cP \cup \{\zero\})|_W) - f(G(\cP)|_W),  
\]
where $W$ is a bounded subset of $\R^d$. This is the cost paid by adding a point at the origin. Then the functional $f$ is said to be {\it weakly stabilizing} if there is a random variable $\Delta$, called the limit add-one cost, such that for any sequence of cubes $\{W_n\}$ tending to $\R^d$, 
\[
	D_\zero f(W_n) \to \Delta \quad \text{in probability as } n \to \infty.
\]
Here by a cube, we mean a subset of the form $\prod_i^d [x_i, x_i + l), x_i \in \R, l > 0$. Our CLT is stated as follows. 
\begin{theorem} \label{theo:1}
Assume that the functional $f$ is weakly stabilizing and satisfies the following moment condition
\[
	\sup_{\zero \in W: \text{cube}} \Ex[|D_\zero f(W)|^p] < \infty,
\]
for some $p > 2$. 
Then as the sequence of cubes $W$'s tends to $\R^d$, 
\[
	\frac{f(G(\cP)|_W) - \Ex[f(G(\cP)|_W)]}{\sqrt{|W|}} \dto \Normal(0, \sigma^2), \quad \frac{\Var[f(G(\cP)|_W)]}{|W|} \to \sigma^2. 
\]
Here `$\dto$' denotes the convergence in distribution, $|W|$ is the volume of the cube $W$ and $\Normal(0, \sigma^2)$ denotes the normal distribution with mean zero and variance $\sigma^2$. Moreover, the limiting variance $\sigma^2$ is positive, if the limit add-one cost is non-trivial, that is, $\Prob(\Delta \neq 0) > 0$.
\end{theorem}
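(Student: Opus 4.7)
The plan is to adapt the martingale-difference approach of Penrose--Yukich \cite{PY-2001} to the RCM by treating the Poisson points together with their Bernoulli edge labels as a single marked point process. Partition $W$ into subcubes $Q_1,\ldots,Q_N$ of fixed side-length $L$, and let $\F_k$ be the $\sigma$-algebra generated by $\cP\cap(Q_1\cup\cdots\cup Q_k)$ together with all edge indicators incident to at least one of these points. The centered functional decomposes as the martingale sum
\[
 f(G(\cP)|_W)-\Ex[f(G(\cP)|_W)] \;=\; \sum_{k=1}^N D_k, \qquad D_k=\Ex[f\mid\F_k]-\Ex[f\mid\F_{k-1}],
\]
and one applies the standard martingale CLT by verifying the conditional Lindeberg condition and convergence of the normalized conditional variance.

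The Lindeberg condition will follow from a uniform $L^p$-bound on the increments. A resampling coupling in which $\cP\cap Q_k$ and its incident edges are replaced by an independent copy writes $D_k$ as a telescoping sum of at most $|\cP\cap Q_k|$ single add-one costs evaluated in sub-configurations of $W$. Since $\Ex[|\cP\cap Q_k|^p]\le C_L$ and the hypothesis gives $\sup_{\zero\in W}\Ex[|D_\zero f(W)|^p]<\infty$, Minkowski's inequality yields $\sup_k\Ex[|D_k|^p]\le C_L'$ uniformly in $W$, from which the Lindeberg condition is an immediate Chebyshev estimate because $p>2$.

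The main step is to identify $\sigma^2:=\lim_W |W|^{-1}\sum_k\Ex[D_k^2\mid\F_{k-1}]$. The key observation is that for a typical point $x\in Q_k$, after translation by $-x$, the increment $D_k$ is well-approximated by the add-one cost $D_\zero f(W-x)$ of inserting the origin into a configuration whose ``past'' agrees with $\cP\cap((Q_1\cup\cdots\cup Q_{k-1})-x)$. Combining translation invariance of the marked Poisson process with weak stabilization and the $L^p$-bound (which promotes convergence in probability to convergence in $L^2$ since $p>2$), each such localized add-one cost converges in $L^2$ to $\Delta$. A Mecke-type identity for the joint point-plus-edge process then turns $|W|^{-1}\sum_k\Ex[D_k^2\mid\F_{k-1}]$ into a spatial average that converges to a limit involving $\Ex[\Delta^2]$. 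I expect the principal technical obstacle to be controlling the ``future'' contribution from cubes $Q_j$ with $j>k$ uniformly in $k$; this is precisely where the upgrade of weak stabilization to $L^2$-stabilization, using the moment hypothesis, becomes essential, and where the joint randomness of points and edges must be handled with care in the Mecke calculation.

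Finally, positivity of $\sigma^2$ when $\Prob(\Delta\ne 0)>0$ is handled by the standard outside-conditioning argument: bound $\Var[f(G(\cP)|_W)]$ below by $\Ex[\Var[f\mid\cG_R]]$, where $\cG_R$ is the $\sigma$-algebra generated by the points and edges outside a ball $B_R(\zero)$, and use weak stabilization inside $B_R$ to extract a contribution to the variance that, as $R\to\infty$, becomes a positive multiple of $\Ex[\Delta^2]$, so that $\sigma^2>0$ whenever $\Delta$ is non-trivial.
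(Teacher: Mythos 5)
Your plan is the martingale-difference route of Penrose--Yukich, whereas the paper deliberately avoids it: it realizes the RCM as the image of an edge-marked Poisson process under the marking map $T$, approximates $f$ by a sum of i.i.d.\ block contributions $f(T(\hat\eta|_{C_i}))$, and controls the error with the Last--Nestmann--Schulte Poincar\'e inequality; the limiting variance comes from their exact variance identity $\Var[f]=\lambda\int_W\int_0^1\hat\Ex[\Ex[D_{(x,t,M)}|\hat\eta_t]^2]\,dt\,dx$. This difference is not cosmetic, and your route has a genuine gap at its central step. To apply the martingale CLT you must show that the \emph{random} quantity $|W|^{-1}\sum_k\Ex[D_k^2\mid\F_{k-1}]$ converges \emph{in probability} to the constant $\sigma^2$. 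A Mecke-type identity only computes expectations, so at best it identifies $\lim |W|^{-1}\sum_k\Ex[D_k^2]$; to upgrade this to stochastic convergence of the conditional variances one needs a weak law of large numbers for the dependent array $\{D_k^2\}$, and the standard second-moment argument for that (bounding $\Var[\sum_k D_k^2]$) requires fourth moments of the increments. This is exactly why \cite{PY-2001} assumes $p=4$; the whole point of the $p>2$ hypothesis here (following \cite{Trinh-2019}) is that the martingale route is abandoned in favor of the block/Poincar\'e argument, which only ever needs second moments of add-one costs plus uniform integrability. As written, your proposal cannot close this step under $p>2$.

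Two further points. First, your filtration ``generated by $\cP\cap(Q_1\cup\dots\cup Q_k)$ together with all edge indicators incident to at least one of these points'' is not well defined: an edge incident to a revealed point and an unrevealed point either does not yet exist or leaks information about the unrevealed configuration. The clean formalization is precisely the paper's marked process $\hat\eta$ on $\R^d\times[0,1]\times[0,1]^{\N\times\N}$ with the map $T$, and then one must confront the fact that removing or adding a point re-indexes the edge marks of the remaining points, which is why the paper defines the add-one cost as $f(T(\eta\cup\{(x,t,M)\}))-f(T(\eta\cup\{(x,t,M)\})\setminus\{x\})$ rather than as a difference against $f(T(\eta))$. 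Second, your positivity argument by conditioning on the configuration outside $B_R(\zero)$ implicitly requires a stabilization radius, i.e.\ \emph{strong} stabilization; under mere weak stabilization there is no such radius, and indeed PY--2001's positivity statement is proved only under strong stabilization. The paper instead proves positivity from the variance representation $\sigma^2=\lambda\int_0^1\hat\Ex[\Ex[\Delta_t|\hat\eta_t]^2]\,dt$ by establishing continuity of the integrand at $t=1$, where the conditioning becomes trivial and the integrand approaches $\hat\Ex[\Delta_1^2]>0$. You would need either to strengthen your hypotheses or to import an identity of this type to get positivity from non-triviality alone.
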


An extended version of Theorem~\ref{theo:1} is stated as Theorem \ref{thm:homo-mark}. It is worth mentioning that this is a result in the thermodynamic regime where the connection function $\varphi$ is fixed (and will be assumed to satisfy the condition 
$
	\int_{\R^d} \varphi(x) dx \in (0, \infty)$).
The terminology is based on the study of random geometric graphs \cite{Penrose-book} in which three main regimes are divided according to the limit of the radius $r = r(W)$: sparse regime ($r(W) \to 0$), critical or thermodynamic regime ($r(W) \to const$) and dense regime ($r(W) \to \infty$).

For the proof, we use the idea of generating the random connection model $(\lambda, \varphi)$ from a marked Poisson point process in \cite{Last-Nestmann-Schulte-2018}. We actually establish general CLTs for weakly stabilizing functionals on  marked Poisson point processes from which the above theorem is just a particular case. Examples of weakly stabilizing functionals include isomorphic subgraph counts, (isomorphic) component counts,  Betti numbers of the clique complex of a graph, and the size of the biggest component. Thus, CLTs for those quantities are obtained from the above general result. Note that by the approach in \cite{Last-Nestmann-Schulte-2018}, CLTs with rate of convergence for isomorphic component counts were established. By approximation, a CLT (without rate) for the total number of connected components was then derived. CLTs for Betti numbers in this paper are generalizations of that result, because the zeroth Betti number is nothing but the number of connected components. Moreover, in Section 4, we establish the CLT for the size of the biggest cluster of the  random graph provided that $\lambda$ is large enough and $\varphi$ satisfies two conditions (C1) and (C2). Roughly speaking, the condition (C1) requires that $\varphi$ is a radial function with $\lim_{x\rightarrow 0} \varphi(x)=1$, while  (C2) is a moment condition on $\varphi$. As far as we are concerned, our results for Betti numbers and the size of the biggest component of random connection models  have not been known before.

The paper is organized as follows. In Section 2, we establish a general result for weakly stabilizing functionals on marked Poisson point processes, and then the CLT  for random connection models with general functionals is derived. Thenceforth,  we apply these results to establish CLTs for isomorphic subgraph counts and Betti numbers in Section 3, and for the size of the biggest component in Section 4.

\section{General results}
\subsection{CLT for weakly stabilizing functionals on marked Poisson point processes}
In this sub-section, we consider a random graph with marks built on a special marked Poisson point process under which the random connection model $(\lambda, \varphi)$ can be generated.  
Let $\hat \eta$ be a Poisson point process on $\bS:= \R^d \times [0,1] \times [0,1]^{\N \times \N}$ with the intensity measure $\lambda \ell_d \otimes \ell \otimes \Q$, where $\lambda>0$ is a constant, $\ell_d$ is the Lebesgue measure on $\R^d$, $\ell$ is the Lebesgue measure on $[0,1]$ and $\Q = \ell^{\otimes\N \times \N}$ is the product measure of $\ell$ on $\bM := [0,1]^{\N \times \N}$. To a point $(x, t, M) = (x,t,(u_{i,j})) \in \bS$, the first component points out the location in $\R^d$, the second one is regarded as its birth time and the third one is a double sequence of marks.

Let $\{B_k\}_{k\in \N}$ be an enumeration of all unit cubes from the lattice $\Z^d$. To be more precise, each $B_k$ is of the form $\prod_{i=1}^d [n_i, n_i +1), n_i \in \Z$. For a locally finite set $\eta \subset \bS$ (the number of points of $\eta$ in any compact set is finite) whose birth times are all different, the edge marking mapping $T$ associated with $\{B_k\}$ is constructed as follows \cite{Last-Nestmann-Schulte-2018}. We first order the points of $\eta$ in each cube $B_k$ according to their birth times. Then for two points $s_1=(x,t,(u_{i,j}))$ and $s_2=(y, s, (v_{i,j}))$ in $\eta$ with $t < s$ and $s_1$ the $m$th oldest point in $B_n$, the edge $\{x, y\}$ is marked with $v_{m,n}$. The resulting image $T(\eta)$ consisting of points of the form $(\{x, y\}, u)$ is viewed as a graph with marks on the set of the first components of $\eta$. Formally, $T$ is a measurable map from $\bN(\bS)$ to $\bN((\R^d)^{[2]} \times [0,1])$, where $\bN(\bX)$ denotes the space of locally finite subsets of a topological space $\bX$ and $(\R^d)^{[2]}$ denotes the space of undirected edges. Given a connection function $\varphi$, which is a measurable symmetric function $\varphi \colon \R^d \to [0,1]$, a random connection model with parameters $(\lambda, \varphi)$ can be generated from $\hat \eta$ by 
\begin{equation}\label{RCM}
	\hat \eta \mapsto T(\hat \eta) \mapsto \Big\{\{x, y\} : (\{x, y\}, u) \in T(\hat \eta),  u < \varphi(x - y) \Big\}.
\end{equation}
We will study more about RCMs in the next section.

For $\eta \in \bN(\bS)$, and $W\subset \R^d$, denote by $\eta |_W$ the restriction of $\eta$ on $\{(x, t, M) : x \in W\}$ and $T(\eta)|_W$ the induced subgraph of $T(\eta)$ with vertices in $W$. Note that by the construction $T(\hat \eta)|_W = T(\hat \eta |_W)$, if $W$ is a union of sets from the collection $\{B_k\}$. For general $W$, two graphs  $T(\hat \eta)|_W$ and $T(\hat \eta |_W) $ have the same set of vertices, but edges may be different. However, they have the same distribution. This is because conditional on the configuration of points in $W$, each edge is independently marked with a random variable uniformly distributed on $[0,1]$.

 Let $f$ be a (measurable) functional defined on finite subsets of $(\R^d)^{[2]} \times [0,1]$. Then the add-one cost of $f$, the functional on finite subsets of $\bS$, is defined as
\begin{equation}\label{add-one}
	D_{(x,t,M)}(\eta) = f(T(\eta \cup \{(x,t,M)\})) - f(T(\eta \cup \{(x,t,M)\}) \setminus \{x\} ),  \quad \eta \subset \bS.
\end{equation}
Here for $(x,t,M) \in \eta$, the graph $T(\eta) \setminus \{x\}$ is obtained from $T(\eta)$ by removing the vertex $x$ and all corresponding edge marks.

Set $\hat \Omega = \Omega \times \bM$ and $\hat \Prob = \Prob \otimes \Q$, where $(\Omega, \cF, \Prob)$ is the underlying probability space for the point process $\hat \eta$. We will use $\hat \Ex$ to denote the expectation with respect to $\hat \Prob$.

\begin{definition}\label{defn:weakly-stabilizing}
\begin{itemize}
\item[(i)] The functional $f$ is said to be translation invariant if for any $z \in \R^d$, and any finite set $\{(x_i, y_i, u_i)\}_{i \in I} \subset (\R^d)^{[2]} \times [0,1]$, 
 \[
 	f(\{(x_i, y_i, u_i)\}_{i \in I}) = f(\{(z + x_i, z+ y_i, u_i)\}_{i \in I}).
 \]
 Here for simplicity, $(x, y, u)$ denotes an element in $(\R^d)^{[2]}\times [0,1]$.
 
 \item[(ii)] The functional $f$ is said to be weakly stabilizing if it is translation invariant and there is a random variable $\Delta_1 = \Delta_1(\omega, M)$ (defined on $\hat \Omega$) such that for any sequence of cubes $\{W_n\}_n$ tending to $\R^d$, 
	\[
		D_{(\zero,1,M)} (\hat \eta |_{W_n}) \hatPto \Delta_1.
	\]
Here `$\hatPto$' denotes the convergence in probability with respect to $\hat \Prob$ and $\zero = (0, \dots, 0)\in \R^d$ denotes the origin.
\end{itemize}
\end{definition}

\begin{remark}
Note that $T(\hat \eta |_{W} \cup \{\zero,1,M\})$ is obtained from $T(\hat \eta |_{W})$ by adding the vertex $\zero$ and new edges connected to it. Thus, 
\[
	D_{(\zero,1,M)} (\hat \eta |_{W}) = f(T(\hat \eta |_{W} \cup \{\zero,1,M\})) - f(T(\hat \eta |_{W})).
\]
\end{remark}

From now on, assume that the functional $f$ is translation invariant.
The following criterion might be useful to check the weak stabilization.
\begin{proposition}\label{prop:increasing-sequence}
	Assume that for any increasing sequence of cubes $\cW = \{W_n\}_{n = 1}^\infty$ tending to $\R^d$, the sequence $\{D_{(\zero,1,M)}(\hat \eta|_{W_n})\}$ converges in probability to a limit $\Delta^{(\cW)}$. Then the functional $f$ is weakly stabilizing. 
\end{proposition}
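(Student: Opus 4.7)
The plan is to establish weak stabilization in two stages: first show that the limit $\Delta^{(\cW)}$ in the hypothesis does not depend on the choice of the increasing sequence $\cW$, and then upgrade convergence from increasing sequences of cubes to arbitrary sequences of cubes tending to $\R^d$.

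For the first stage, given two increasing sequences $\cW^{(1)} = \{W_n^{(1)}\}$ and $\cW^{(2)} = \{W_n^{(2)}\}$ of cubes tending to $\R^d$, I would construct a single increasing sequence $\cV = \{V_k\}$ by interleaving them. Set $V_1 = W_1^{(1)}$; inductively pick $V_{2j}$ to be some $W_{m_j^{(2)}}^{(2)}$ with $m_j^{(2)}$ strictly greater than every previously used index in $\cW^{(2)}$ and with $W_{m_j^{(2)}}^{(2)} \supseteq V_{2j-1}$, which is possible because $\cW^{(2)}$ eventually engulfs any bounded cube, and pick $V_{2j+1}$ from $\cW^{(1)}$ analogously. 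Then $\cV$ is an increasing sequence of cubes tending to $\R^d$, so by hypothesis $D_{(\zero,1,M)}(\hat\eta|_{V_k}) \hatPto \Delta^{(\cV)}$. By construction the odd-indexed subsequence of $\cV$ is a subsequence of $\cW^{(1)}$, which is itself $\hatPto$-convergent to $\Delta^{(\cW^{(1)})}$; uniqueness of limits in probability yields $\Delta^{(\cV)} = \Delta^{(\cW^{(1)})}$ almost surely, and symmetrically $\Delta^{(\cV)} = \Delta^{(\cW^{(2)})}$. Thus a single random variable $\Delta_1$ serves as the limit for every increasing sequence.

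For the second stage, let $\{W_n\}$ be an arbitrary sequence of cubes tending to $\R^d$ and suppose for contradiction that $D_{(\zero,1,M)}(\hat\eta|_{W_n})$ does not converge to $\Delta_1$ in $\hat\Prob$. Then there exist $\varepsilon,\delta>0$ and a subsequence $\{W_{n_k}\}$ with $\hat\Prob(|D_{(\zero,1,M)}(\hat\eta|_{W_{n_k}})-\Delta_1|>\varepsilon)\geq \delta$ for every $k$. Since $W_n \to \R^d$ means every bounded cube is eventually contained in $W_n$, I can extract an increasing sub-subsequence $\{W_{n_{k_j}}\}_j$ by choosing $k_{j+1}$ so large that $W_{n_{k_{j+1}}}\supseteq W_{n_{k_j}}$. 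This is again an increasing sequence of cubes tending to $\R^d$, so by the first stage it converges in probability to $\Delta_1$, contradicting the lower bound $\delta$.

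The main obstacle is the first stage: cubes are not closed under unions or intersections, so the familiar trick of comparing two sequences via the common dominating sequence $W_n^{(1)}\cup W_n^{(2)}$ is not available. The interleaving construction avoids this by relying only on nested containment, which is automatic once at least one sequence tends to $\R^d$. The second stage is then a routine subsequence-of-a-subsequence argument for convergence in probability.
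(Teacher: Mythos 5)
Your proof is correct and follows essentially the same route as the paper's: interleaving two increasing sequences into a single nested sequence to identify a common limit $\Delta_1$, then handling an arbitrary sequence tending to $\R^d$ by extracting an increasing subsequence from a hypothetical bad subsequence to reach a contradiction. The only difference is that you spell out the interleaving and the subsequence-of-a-convergent-sequence step in more detail than the paper does.
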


\begin{proof}
	We first show that the limit $\Delta^{(\cW)}$ is unique. Let  $\cV=\{V_n\}$ and $\cW=\{W_n\}$ be two increasing sequences of cubes tending to $\R^d$. Then by the assumption, 
\[
	D_{(\zero,1,M)} (\hat \eta |_{W_n}) \hatPto \Delta^{(\cW)}, \quad 	D_{(\zero,1,M)} (\hat \eta |_{V_n}) \hatPto \Delta^{(\cV)} .
\]
We form a new sequence from subsequences of $\cV$ and $\cW$ in a way that 
		\[
		V_1 \subset W_{i_1} \subset V_{j_1} \subset W_{i_2} \subset \cdots \nearrow \R^d.
	\]
Along this sequence, the limit of the add-one cost exists, implying that $\Delta^{(\cW)} = \Delta^{(\cV)}$ ($\hat \Prob$-almost surely). Thus, there is a random variable $\Delta$ such that for any increasing sequence $\{W_n\}$ tending to $\R^d$, 
\[
		D_{(\zero,1,M)} (\hat \eta |_{W_n}) \hatPto \Delta.
\]

Now let $\{W_n\}$ be an arbitrary sequence of cubes tending to $\R^d$. Assume for contradiction that $\{D_{(\zero,1,M)} (\hat \eta |_{W_n})\}$ does not converge in probability to $\Delta$. Then there are $\varepsilon > 0, \delta > 0$, and a subsequence $\{W_{n_k}\}$ such that 
\[	
	\hat\Prob (| 	D_{(\zero,1,M)} (\hat \eta |_{W_{n_k}}) - \Delta| \ge \varepsilon) > \delta.
\]
Since the sequence $\{W_{n_k}\}$ tends to $\R^d$, we can always extract a further increasing subsequence along which  the sequence of the add-one cost converges to $\Delta$, making a contradiction. The proof is complete. 
\end{proof}

\begin{definition}
The functional $f$ is said to satisfy a moment condition if for some $p > 2$,
\begin{equation}\label{moment-condition}
	\sup_{\zero \in W: \text{cube}}  \hat \Ex [|D_{(\zero, 1, M)} (\hat \eta |_{W})| ^ p]< \infty.
\end{equation}
\end{definition}

Assume that the functional $f$ is weakly stabilizing and satisfies the above moment condition. Then for any sequence of cubes $\{W_n\}$ tending to $\R^d$, 
\begin{equation}\label{Lq-convergence}
	D_{(\zero, 1, M)}(\hat \eta |_{W_n}) \to \Delta_1 \quad \text{in $L^{q}(\hat \Omega)$},
\end{equation}
for $1\le q<p$.
This is a consequence of a fundamental result in probability theory (the corollary following Theorem 25.12 in \cite{Billingsley}).

Our main result in this paper is the following central limit theorem.
\begin{theorem}\label{thm:main}
	Assume that the functional $f$ is weakly stabilizing and satisfies the moment condition~\eqref{moment-condition}. Then for any sequence of cubes $W$'s tending to $\R^d$, 
	\[
	\frac{f(T(\hat \eta|_{W})) - \Ex[f(T(\hat \eta|_{W}))]}{\sqrt{|W|}} \dto \Normal(0, \sigma^2),
	\]
for a constant $\sigma^2 \ge 0$ given in~\eqref{limiting-variance} below. Here recall that $|W|$ denotes the volume, or the Lebesgue measure $\ell_d(W)$ of $W$. Moreover, the limiting variance $\sigma^2$ is positive, if the limit add-one cost $\Delta_1$ is non-trivial, that is, $\hat \Prob(\Delta_1 \neq 0) > 0$.
\end{theorem}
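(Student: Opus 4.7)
The plan is to prove Theorem~\ref{thm:main} by a martingale-difference argument in the style of Penrose--Yukich~\cite{PY-2001}, adapted to the marked-Poisson setting. First I would reduce the problem to windows $W$ that are finite unions of the base cells $\{B_k\}$; the cells crossing $\partial W$ number $O(|W|^{1-1/d})$, and each contributes an additive term of bounded $L^p$-norm by the moment condition~\eqref{moment-condition}, so both their mean and variance contributions are $o(|W|)$. For such $W = B_{k_1}\cup\cdots\cup B_{k_N}$ with $N = N(W) \asymp |W|$, set
\[
  \cF_j = \sigma\bigl(\hat\eta \cap (B_{k_1}\cup\cdots\cup B_{k_j})\bigr),\qquad F_W = f(T(\hat\eta|_W)),
\]
and telescope
\[
  F_W - \hat\Ex[F_W] = \sum_{j=1}^N D_j,\qquad D_j = \hat\Ex[F_W\mid\cF_j] - \hat\Ex[F_W\mid\cF_{j-1}],
\]
to obtain an $(\cF_j)$-martingale-difference array.

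Next, I would establish a uniform $L^p$ bound $\sup_{W,j} \hat\Ex[|D_j|^p] < \infty$ by independent resampling: let $\hat\eta'$ be an i.i.d.\ copy of $\hat\eta$ and let $\hat\eta^{(j)}$ replace the points in $B_{k_j}$ by those of $\hat\eta'$; then $D_j$ is the conditional expectation of $F_W - F_W^{(j)}$ with respect to a suitable enlarged $\sigma$-algebra, and $F_W - F_W^{(j)}$ expands as a bounded random number of add-one/subtract-one costs indexed by the points of $(\hat\eta\cup\hat\eta')\cap B_{k_j}$. A Minkowski argument combined with the Poisson moments of $|\hat\eta\cap B_{k_j}|$ and~\eqref{moment-condition} closes the bound. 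Since $p>2$, the normalized array $D_j/\sqrt{|W|}$ then automatically satisfies the Lindeberg condition.

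The heart of the argument is the convergence of the conditional variance
\[
  s_W^2 := \frac{1}{|W|}\sum_{j=1}^N \hat\Ex[D_j^2\mid \cF_{j-1}] \hatPto \sigma^2.
\]
For an interior cube $B_{k_j}$ (at distance $\gg 1$ from $\partial W$), Palm theory for the marked Poisson process $\hat\eta$ expresses $\hat\Ex[D_j^2\mid\cF_{j-1}]$ as an integral, over the location, birth-time and marks $(x,t,M)$ with $x\in B_{k_j}$, of (products of) add-one costs $D_{(x,t,M)}(\hat\eta|_W)$. Weak stabilization together with the $L^q$ upgrade~\eqref{Lq-convergence} forces these costs to converge in $L^2$ to translated copies of $\Delta_1$; by translation invariance the resulting contribution is $j$-independent, and averaging over the $N\asymp|W|$ cubes yields $s_W^2 \hatPto \sigma^2$ (the boundary cubes being negligible by the reduction step). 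Plugging $\{D_j/\sqrt{|W|}\}$ into the Dvoretzky--Brown martingale CLT then delivers the stated normal limit and supplies the explicit formula for $\sigma^2$ referenced in~\eqref{limiting-variance}.

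The main obstacle I expect is this last step: weak stabilization only guarantees convergence in probability of a single add-one cost, whereas I need convergence of its $\cF_{j-1}$-conditional second moment uniformly enough across $O(|W|)$ cubes to survive the averaging. I would handle it by combining~\eqref{Lq-convergence} with a truncation argument and with the spatial independence of $\hat\eta$ restricted to disjoint cubes. For the positivity claim, assuming $\hat\Prob(\Delta_1\neq 0)>0$, I would adapt the scheme of \cite{PY-2001}: bound $\sigma^2$ from below by the conditional variance of a single interior cube contribution in which the location and marks of one Poisson point near the origin are explicitly revealed, and use non-triviality of $\Delta_1$ to extract a strictly positive lower bound.
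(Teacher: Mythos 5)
Your route (martingale differences over a cell-by-cell filtration, Lindeberg/McLeish-type CLT, as in \cite{PY-2001}) is genuinely different from the paper's, which instead follows \cite{Trinh-2019}: it compares $f(T(\hat\eta|_{W_n}))$ with the sum $X_{n,L}$ of i.i.d.\ block contributions $f(T(\hat\eta|_{C_i}))$ over lattice cubes of volume $L$, applies the classical CLT to $X_{n,L}$ for fixed $L$, and controls the error by the Poincar\'e inequality of \cite{Last-Nestmann-Schulte-2018}, which is stated directly in terms of the add-one cost $D_{(x,1,M)}$ of \eqref{add-one}; the variance limit and its positivity come from the exact variance formula of \cite{Last-Nestmann-Schulte-2018} together with a continuity argument at $t=1$ (Lemma~\ref{lem:limit-variance}).

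There is a genuine gap in your plan at the resampling/telescoping step, and it is precisely the difficulty the paper flags in Remark~2.7(ii). In this marked model the map $T$ is \emph{not} monotone under point insertion: the mark of an edge $\{x,y\}$ is $v_{m,n}$ where $m$ is the rank (by birth time) of the older endpoint within its cell $B_n$, so inserting or deleting a point of $B_{k_j}$ re-indexes the marks of edges between \emph{other} points, i.e.\ $T(\eta)$ is not a subgraph of $T(\eta\cup\{(x,t,M)\})$. Consequently your expansion of $F_W - F_W^{(j)}$ into ``a bounded random number of add-one/subtract-one costs'' produces increments of the form $f(T(\cdot\cup\{p\}))-f(T(\cdot))$, which are \emph{not} the add-one costs \eqref{add-one} (those compare $T(\eta\cup\{p\})$ with its induced subgraph after deleting the vertex, not with $T(\eta)$). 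Neither weak stabilization nor the moment condition \eqref{moment-condition} controls these naive increments, so both your uniform $L^p$ bound on $D_j$ and the convergence of the conditional variance $s_W^2$ are unsupported. The same issue undermines the positivity step: the PY-style ``reveal one point near the origin'' argument yields positivity under non-degeneracy of $\Delta$ (and strong stabilization), whereas the theorem asserts $\sigma^2>0$ under the weaker hypothesis $\hat\Prob(\Delta_1\neq 0)>0$; the paper obtains this from the explicit formula $\sigma^2=\lambda\int_0^1\hat\Ex[\Ex[\Delta_t\mid\hat\eta_t]^2]\,dt$ by proving $\hat\Ex[\Ex[\Delta_t\mid\hat\eta_t]^2]\to\hat\Ex[\Delta_1^2]>0$ as $t\to 1$, and your argument does not produce that formula. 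To repair your approach you would essentially have to re-derive conditional-variance identities in terms of the add-one cost \eqref{add-one}, which is what the variance formula and Poincar\'e inequality of \cite{Last-Nestmann-Schulte-2018} already provide.
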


\begin{remark}

\begin{itemize}
\item[(i)]
For marked Poisson point processes, central limit theorems have been established for functionals $h$ of the form 
\[
	h(\eta) = \sum_{i \in I} \xi((x_i, t_i, M_i), \eta )),
\]
defined on finite subset $\eta = \{(x_i, t_i, M_i)\}_{i \in I} $ of $\bS$, provided that the functional $\xi$ is stabilizing plus some moment conditions \cite{BY-2005, PY-2005}. Isomorphic subgraph counts are typical examples of such functionals in which $\xi((x_i, t_i, M_i), \eta ))$ is the number of isomorphic subgraphs containing the point $x_i$, divided by a constant. For RCMs studied in the next section, we will show that subgraph counts are weakly stabilizing. However, when the connection function satisfies $\varphi \in (0,1)$, one may immediately see that in the simplest case where the number of edges is considered, the corresponding functional $\xi$ is not stabilizing (in the sense of \cite[\S 2.3.1]{BY-2005} or \cite[Definition 2.1]{PY-2005}). And thus, those general results are not applicable to the RCM~\eqref{RCM} constructed from a marked Poisson point process.

\item[(ii)] A CLT for weakly stabilizing functionals (in the case without marks) was first established in \cite{PY-2001} under a fourth moment condition (a similar condition as the moment condition \eqref{moment-condition} with $p = 4$). It was slightly improved to the case $p > 2$ in \cite{Trinh-2019}. We extend the approach in \cite{Trinh-2019} to prove Theorem~\ref{thm:main}. It is worth noting that a direct generalization of CLTs from the above two papers to a marked  case would lead to a CLT for a functional $h$ defined on finite subset $\eta = \{(x_i, t_i, M_i)\}_{i \in I} $ of $\bS$ with the stabilization concept being defined by using the add-one cost 
\[
	D_{(x, t, M)}h(\eta) = h(\eta \cup \{(x, t, M)\}) - h(\eta).
\]
To apply to the RCM~\eqref{RCM}, we would consider a functional of the form $h= f \circ T$, and thus the add-one cost is given by
\[
	D_{(x, t, M)}h(\eta)  = f(T(\eta \cup \{(x, t, M)\})) - f(T(\eta)).
\]
From the construction of $T$, it is clear that $T(\eta)$ is not a subgraph of $T(\eta \cup \{(x, t, M)\})$, in general, which causes a difficulty in this direction. The idea here is to use an add-one cost defined in the equation~\eqref{add-one}, the difference of $f$ on $T(\eta \cup \{(x, t, M)\})$ and its subgraph $T(\eta \cup \{(x, t, M)\}) \setminus \{x\}$, which is originated from \cite{Last-Nestmann-Schulte-2018}  to define the weak stabilization.
The main contribution of this paper is to introduce a suitable generalized concept of weak stabilization and to establish the limiting variance formula stated in Lemma~\ref{lem:limit-variance}. 
\end{itemize}
\end{remark}

\begin{remark}[A quenched CLT] We state here  a quenched version of Theorem \ref{thm:main}.  For simplicity, assume that the underlying probability space is written as the product 
\[
	(\Omega, \cF, \Prob) = (\Omega_1, \cF_1, \Prob_1) \times (\Omega_2, \cF_2, \Prob_2)
\] for which the first component of $\hat \eta$ is defined on $\Omega_1$, and the second and the third ones are defined on $\Omega_2$, that is,
\[
	\hat \eta(\omega) = \{(x(\omega_1), t(\omega_2), M(\omega_2))\}.
\]
Assume that the functional $f$ satisfies the conditions in Theorem~\ref{thm:main}.
Let 
\[
Z_n(\omega_1, \omega_2)=\frac{f(T(\hat \eta|_{W_n})) - \Ex_2[f(T(\hat \eta|_{W_n}))]}{\sqrt{n}},
\]
where $\Ex_2$ denotes the expectation with respect to $\Prob_2$.
Then there exists $0\leq \sigma_q^2 \leq \sigma^2$ (the variance in Theorem \ref{thm:main}), such that with high probability (in $\omega_1$) the random variables $(Z_n(\omega_1, \cdot))_{n\geq 1}$ converge weakly to $\Normal(0,\sigma_q^2)$. The detailed statement (see \eqref{qclt-zn}) and its proof are given in the Appendix A.
\end{remark}

We need some preparations before proving the main result. A cube is called a lattice cube if it is of the form $\prod_{i=1}^d [n_i, n_i + m)$, with $n_i \in \Z$ and $m \in \N$. Clearly, a lattice cube is a union of cubes from the collection $\{B_k\}$.

\begin{lemma}\label{lem:add-t}
Assume that the functional $f$ is weakly stabilizing. Then there is a random variable $\Delta_t$ (defined on $\hat \Omega$) such that for any sequence of cubes $\{W_n\}$ tending to $\R^d$, 
	\[
		D_{(\zero,t,M)} (\hat \eta |_{W_n}) \hatPto \Delta_t.
	\]

\end{lemma}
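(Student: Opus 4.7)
My approach is to deduce convergence for general $t$ from the weak stabilization hypothesis (which only gives the case $t=1$) via a measure-preserving coupling on $\hat\Omega$, combined with the subsequence argument of Proposition~\ref{prop:increasing-sequence}. By imitating that proof (interleaving two increasing sequences and using uniqueness of the combined limit), it suffices to show convergence in probability of $D_{(\zero,t,M)}(\hat\eta|_{W_n})$ along every increasing sequence $W_n\nearrow\R^d$. Let $B_0$ denote the lattice cube of $\{B_k\}$ containing $\zero$, set $K=|\hat\eta\cap B_0|$ and $R=|\{(x,s,M')\in\hat\eta\cap B_0: s<t\}|+1$. Once $W_n\supset B_0$, the rank of origin in $B_0$ under $T(\hat\eta|_{W_n}\cup\{(\zero,t,M)\})$ equals $R$ and is eventually constant in $n$.

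The central step is to construct a measurable, $\hat\Prob$-preserving bijection $\Psi:\hat\Omega\to\hat\Omega$ that fixes point locations and birth times but permutes the entries of origin's matrix $M$ and of every point matrix $M_y$ (for $y\in\hat\eta$) in a configuration-dependent way, so that
\[
T(\hat\eta|_{W_n}\cup\{(\zero,t,M)\})(\omega) \;=\; T(\hat\eta|_{W_n}\cup\{(\zero,1,M)\})(\Psi(\omega))
\]
as marked graphs whenever $W_n\supset B_0$. Concretely, $\Psi$ should cyclically shift the rows $\{R,R+1,\ldots,K+1\}$ of column $0$ of each $M_y$ (to compensate for the $+1$ rank shift of $B_0$-points with $s_y>t$ when origin is inserted at rank $R$ rather than $K+1$), and exchange entries between the $(m_y,n_y)$-position of origin's matrix and the $(R,0)$-position of $M_y$ for each $y\in\hat\eta$ with $s_y>t$ (reconciling the differing sources of origin-incident edge marks). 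Because every matrix entry is i.i.d.\ uniform on $[0,1]$ and the permutation is measurable in $\hat\eta$ but independent of the entries being permuted, $\Psi$ preserves $\hat\Prob$. From the coupling we get $D_{(\zero,t,M)}(\hat\eta|_{W_n})(\omega)=D_{(\zero,1,M)}(\hat\eta|_{W_n})(\Psi(\omega))$, and applying weak stabilization at $t=1$ gives $D_{(\zero,t,M)}(\hat\eta|_{W_n}) \hatPto \Delta_t:=\Delta_1\circ\Psi$. The extension to arbitrary (not necessarily increasing) sequences is then automatic by the same subsequence argument as in Proposition~\ref{prop:increasing-sequence}.

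The main obstacle is specifying $\Psi$ as a genuine bijection that simultaneously aligns the marks of all affected edges---origin-incident edges (whose source matrices differ between the $t$ and $t=1$ cases) as well as non-origin edges touching $B_0$-points with $s>t$ (whose row-index in the edge-marking scheme shifts by one). Although measure-preservation is immediate from the i.i.d.\ uniformity of entries, careful bookkeeping is needed to verify that the required substitutions involve distinct source and target positions and fit together into a single measurable bijection on $\hat\Omega$.
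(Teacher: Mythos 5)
Your coupling idea is close in spirit to the paper's observation that $T(\hat\eta|_W\cup\{(\zero,t,M)\})$ and $T(\hat\eta|_W\cup\{(\zero,1,M)\})$ have the same \emph{distribution}, but the pathwise identity you claim, namely $T(\hat\eta|_{W_n}\cup\{(\zero,t,M)\})(\omega)=T(\hat\eta|_{W_n}\cup\{(\zero,1,M)\})(\Psi(\omega))$ for a single measure-preserving $\Psi$ and \emph{all} cubes $W_n\supset B_0$, is false for non-lattice cubes, and this is a genuine gap. In the $(\zero,1,M)$-construction the origin is the youngest point, so the mark of the edge $\{\zero,y\}$ is read from origin's matrix at position $(m_y,n_y)$, where $m_y$ is the rank of $y$ among the points of $B_{n_y}\cap W_n$. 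For a point $y$ lying in a unit cube $B_{n_y}$ that straddles $\partial W_n$, this rank changes as $n$ grows, so the source entry of $M$ changes with $n$; meanwhile in the $(\zero,t,M)$-construction the same edge (for $s_y>t$) always draws its mark from the fixed entry $M_y(R,n_0)$. A single entry-permutation $\Psi$ would therefore have to make several distinct entries of $M^{\Psi\omega}$ all equal to $M_y^\omega(R,n_0)$, which is impossible. Your construction does work when every $W_n$ is a union of cubes from $\{B_k\}$ (then all ranks are the full ranks and are $n$-independent), but almost surely it fails for every non-lattice cube, and the discrepant origin-incident edges need not be negligible for a general functional $f$. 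For the same reason the final claim that the extension to arbitrary cubes is ``automatic by the subsequence argument of Proposition~\ref{prop:increasing-sequence}'' does not hold: that argument passes from increasing sequences to arbitrary sequences, not from lattice cubes to general cubes.

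The repair is essentially what the paper does. Keep your coupling (or just the equality in distribution) to get convergence of $D_{(\zero,t,M)}(\hat\eta|_{V_n})$ along lattice cubes $V_n$, with limit $\Delta_t=\Delta_1\circ\Psi$; then, for a general cube $W$ containing a lattice cube $V\ni\zero$, use the distributional identity
\[
D_{(\zero,t,M)}(\hat\eta|_W)-D_{(\zero,t,M)}(\hat\eta|_V)\overset{d}{=}D_{(\zero,1,M)}(\hat\eta|_W)-D_{(\zero,1,M)}(\hat\eta|_V),
\]
which holds because this increment is a function of the marked graph $T$ on $W$ alone and the two marked graphs agree in law. Since the right-hand side tends to $0$ in probability by weak stabilization at $t=1$, so does the left-hand side, and combining with convergence along lattice cubes (as in the proof of Proposition~\ref{prop:D-tilde}) yields $D_{(\zero,t,M)}(\hat\eta|_{W_n})\hatPto\Delta_t$ for arbitrary sequences of cubes. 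Without this increment step your argument does not establish the lemma as stated.
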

\begin{proof}
The proof is based on the following two observations 
\begin{itemize}
	\item[\rm (i)] under $\hat \Prob$, the two graphs $T(\hat \eta|_W \cup \{(\zero, t, M)\})$ and $T(\hat \eta|_W \cup \{(\zero, 1, M)\})$ have the same distribution;
	\item[\rm (ii)] for any lattice cube $V$ with $\zero \in V \subset W$, 
	\[
		D_{(\zero,t,M)} (\hat \eta |_W) - D_{(\zero,t,M)} (\hat \eta |_V) \overset{d}{=} D_{(\zero,1,M)} (\hat \eta |_W) - D_{(\zero,1,M)} (\hat \eta |_V). 
	\] 
\end{itemize}
Here `$\overset{d}{=}$' denotes the equality in distribution.
Then similar arguments as those will be used in the proof of Proposition~\ref{prop:D-tilde} work to show the weak stabilization of $D_{(\zero, t, M)}$ for any $t \in [0,1]$. Let us omit the details to continue the main stream. 
\end{proof}

For $\eta \in \bN(\R^d \times [0,1] \times \bM)$ and $t \in [0,1]$, we write $\eta_t$ for the restriction of $\eta$ to $\R^d \times [0,t) \times \bM$, and $\Ex[\cdot | \hat \eta_t]$ denotes the conditional expectation with respect to the sigma-field generated by $\hat \eta_t$.

\begin{lemma}\label{lem:limit-variance}
Assume that the functional $f$ is weakly stabilizing and satisfies the moment condition~\eqref{moment-condition}. Then for any sequence of cubes $W$'s tending to $\R^d$, 
\begin{equation}\label{limiting-variance}
	 \frac{\Var[f(T(\hat \eta|_{W}))]}{|W|}\to \lambda  \int_0^1 \hat \Ex[ \Ex[\Delta_t | \hat \eta_t]^2 ]   dt =: \sigma^2.
\end{equation}
The limiting variance $\sigma^2$ is positive, if $\hat \Prob(\Delta_1 \neq 0) > 0$. 
\end{lemma}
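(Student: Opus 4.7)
The plan is to establish a Clark--Ocone-type martingale variance formula along the birth-time axis, then transport the resulting integrand to the origin by translation invariance and pass to the limit using Lemma~\ref{lem:add-t}.

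Write $F := f(T(\hat\eta|_W))$ and $\F_t := \sigma(\hat\eta_t)$. Since $\hat\eta$ is a Poisson random measure on $\bS$ with intensity $\lambda\,dx\otimes dt\otimes \Q(dM)$ and $F$ is $\F_1$-measurable, the It\^o isometry applied to the martingale $t\mapsto\hat\Ex[F\mid\F_t]$ should yield
\[
\Var[F] \;=\; \lambda \int_W \int_0^1 \hat\Ex\bigl[\,\Ex[D_{(x,t,M)}f(\hat\eta|_W)\mid\hat\eta_t]^{2}\,\bigr]\,dt\,dx,
\]
where the inner conditional expectation averages only over $\hat\eta\setminus\hat\eta_t$ with the mark $M$ held as a parameter. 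A small justification is needed, because the It\^o formula naturally produces the ordinary add-one cost $f(T(\hat\eta|_W\cup\{z\}))-f(T(\hat\eta|_W))$ rather than the modified $D_{(x,t,M)}f$ of~\eqref{add-one}. The discrepancy $f(T(\hat\eta|_W\cup\{z\})\setminus\{x\})-f(T(\hat\eta|_W))$ comes solely from rank re-indexings inside the cube $B_{n_x}$, and since the mark sequences $(u_{i,j})$ are i.i.d.\ uniform of infinite length, the construction $T$ always produces i.i.d.\ uniform edge marks given the vertex positions and birth times, so the two graphs on the common vertex set $\hat\eta|_W$ are identically distributed and the correction vanishes under $\Ex[\,\cdot\mid\hat\eta_t]$.

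Translation invariance of $f$ and of the law of $\hat\eta$ then gives
\[
\frac{\Var[F]}{|W|} \;=\; \lambda\int_0^1\Bigl(\frac{1}{|W|}\int_W \hat\Ex\bigl[\,\Ex[D_{(\zero,t,M)}f(\hat\eta|_{W-x})\mid\hat\eta_t]^2\,\bigr]\,dx\Bigr)\,dt.
\]
Fix $t$. For each $x\in W$ whose distance to $\bd W$ tends to infinity, $W-x\to\R^d$, so Lemma~\ref{lem:add-t} gives $D_{(\zero,t,M)}f(\hat\eta|_{W-x})\hatPto\Delta_t$. The moment bound~\eqref{moment-condition} together with~\eqref{Lq-convergence} promotes this to $L^2(\hat\Prob)$ convergence, and because conditional expectation is an $L^2$-contraction, the inner integrand converges pointwise in $t$ to $\hat\Ex[\,\Ex[\Delta_t\mid\hat\eta_t]^2\,]$. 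A boundary-layer estimate --- the set of $x\in W$ within any fixed distance of $\bd W$ forms a vanishing fraction of $W$ --- turns this into convergence of the $|W|^{-1}\int_W\cdots\,dx$ average, and dominated convergence in $t$, justified by~\eqref{moment-condition}, yields~\eqref{limiting-variance}.

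For positivity when $\hat\Prob(\Delta_1\neq 0)>0$, note that at $t=1$ one has $\hat\eta_1=\hat\eta$ almost surely, so $\Ex[\Delta_1\mid\hat\eta_1]=\Delta_1$ (with $M$ retained as parameter, as dictated by the It\^o identity above). Hence $\hat\Ex[\,\Ex[\Delta_1\mid\hat\eta_1]^2\,]=\hat\Ex[\Delta_1^2]>0$, and by $L^2$-continuity of the map $t\mapsto\Ex[\Delta_t\mid\hat\eta_t]$ (inherited from the moment bound and the distributional identity $\Delta_t\overset{d}{=}\Delta_1$), the integrand is bounded below on a neighbourhood of $t=1$, forcing $\sigma^2>0$. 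I expect the main technical obstacle to be the careful justification of the variance identity above, specifically (a) verifying that replacing the ordinary add-one cost by the modified $D_{(x,t,M)}f$ is legitimate at the level of conditional expectations, and (b) the precise bookkeeping of ``$M$ fixed versus integrated'' throughout the inner and outer expectations.
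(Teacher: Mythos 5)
Your convergence argument follows the paper's route essentially step for step: the variance identity (which the paper simply cites as Theorem~5.1 of Last--Nestmann--Schulte rather than rederiving it, so your Clark--Ocone sketch replaces a citation), then translation invariance, the $L^2(\hat\Prob)$ convergence $D_{(\zero,t,M)}(\hat\eta|_{W-x})\to\Delta_t$ from Lemma~\ref{lem:add-t} together with the moment condition, the $L^2$-contraction property of conditional expectation, and a boundary-layer estimate. That part is sound.

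The gap is in the positivity argument. You justify the key step by ``$L^2$-continuity of $t\mapsto\Ex[\Delta_t\mid\hat\eta_t]$, inherited from the moment bound and the distributional identity $\Delta_t\overset{d}{=}\Delta_1$.'' The distributional identity only controls unconditional moments: it gives $\hat\Ex[\Delta_t^2]=\hat\Ex[\Delta_1^2]$, but $\hat\Ex[\Ex[\Delta_t\mid\hat\eta_t]^2]$ depends on the \emph{joint} law of $(\Delta_t,\hat\eta_t)$, and by Jensen it may be strictly smaller than $\hat\Ex[\Delta_t^2]$; indeed the conditioning field $\sigma(\hat\eta_t)$ shrinks as $t$ decreases from $1$, so the conditional second moment could a priori drop discontinuously away from $t=1$. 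Establishing the continuity $\hat\Ex[\Ex[\Delta_t\mid\hat\eta_t]^2]\to\hat\Ex[\Delta_1^2]$ as $t\to1$ is the actual content of the paper's positivity proof, and it needs two separate limits: (i) $\Delta_t\to\Delta_1$ in $L^2(\hat\Prob)$ as $t\to1$, which the paper obtains by combining a uniform-in-$t$ approximation of $\Delta_t$ by $D_{(\zero,t,M)}(\hat\eta|_V)$ over lattice cubes $V$ (the $L^2$ error there does not depend on $t$) with the fact that $D_{(\zero,t,M)}(\hat\eta|_V)\to D_{(\zero,1,M)}(\hat\eta|_V)$ as $t\to1$ for each fixed $V$ (no points arrive in $V\times[t,1]\times\bM$ with probability tending to one); and (ii) $\Ex[\Delta_1\mid\hat\eta_t]\to\Delta_1$ in $L^2$ as $t\to1$, which the paper proves in its Steps 4--5 (and which could alternatively be deduced from L\'evy's upward martingale convergence theorem, since $\sigma(\hat\eta_t)$ increases to $\sigma(\hat\eta)$ and $\Delta_1$ is $\sigma(\hat\eta)$-measurable for each fixed $M$). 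Without supplying (i) and (ii), your claim that the integrand is bounded below near $t=1$ is unsupported.
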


\begin{remark}
In the case without marks as in \cite{PY-2001, Trinh-2019}, the limit $\Delta = \Delta_1$ does not depend on $t$ and $M$, and thus the limiting variance is written as
\[
	\sigma^2 = \lambda \int_0^1 \Ex[\Ex[\Delta | \hat \eta_t]^2] dt.
\]
The above lemma shows that $\sigma^2 > 0$, if $\Prob(\Delta \neq 0) > 0$. Note that under the assumption of strong stabilization, Theorem 2.1 in \cite{PY-2001} states that the limiting variance $\sigma^2$ is positive, if $\Delta$ is nondegenerate, that is, $\Delta$ is not a constant. 
\end{remark} 

The proof of the above lemma relies on the following variance formula.

\begin{lemma}[{\cite[Theorem~5.1]{Last-Nestmann-Schulte-2018}}]
Let $f \colon \bN((\R^d)^{[2]} \times [0,1]) \to \R$ be measurable with $\Ex[f(T(\hat \eta|_W))^2] < \infty$, where $W \subset \R^d$. Then 
\[
	\Var[f(T(\hat \eta|_W))] = \lambda \int_{W} \int_0^1 \hat\Ex[\Ex[ D_{(x,t,M)}(\hat \eta|_{W}) | \hat \eta_t]^2] dt dx.
\]
\end{lemma}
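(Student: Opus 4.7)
The plan is to establish this identity via a Clark--Ocone-type martingale representation along the birth-time coordinate of the marked process. Using the product form $\lambda \ell_d \otimes \ell \otimes \Q$ of the intensity, the compensated random measure
\[
	\tilde N(dx, dt, dM) = N(dx, dt, dM) - \lambda\, dx\, dt\, \Q(dM)
\]
is a martingale measure with respect to the filtration $\cF_t := \sigma(\hat \eta_t)$. Writing $F := f(T(\hat \eta|_W))$ and assuming $\Ex[F^2] < \infty$, the process $M_t := \Ex[F \mid \cF_t]$ is a square-integrable $\cF_t$-martingale with $M_0 = \Ex[F]$ and $M_1 = F$ (for a suitable right-continuous version).

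The key step is to show the representation
\[
	M_1 - M_0 = \int_0^1 \int_{\R^d} \int_{\bM} G(x, t, M)\, \tilde N(dx, dt, dM),
\]
and to identify the predictable integrand as $G(x, t, M) = \Ex\bigl[D_{(x,t,M)}(\hat \eta|_W) \mid \cF_t\bigr]$. Heuristically, when an atom $(x,t,M)$ with $x \in W$ arrives at time $t$, the jump of $M$ equals the $\cF_t$-conditional mean of the increment to $F$. The point is that inserting a birth at time $t$ does not alter the ordering of any already-born points inside any cube $B_k$, so the edge marks of $T$ on the previous vertices are preserved; consequently the correct increment produced by $x$ is captured by comparing $T(\hat \eta|_W \cup \{(x,t,M)\})$ to the same graph with vertex $x$ and its incident edges removed, which is exactly the nonstandard add-one cost $D_{(x,t,M)}(\hat \eta|_W)$ from~\eqref{add-one}. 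Contributions from $x \notin W$ vanish because $T(\hat \eta|_W)$ ignores such atoms, so the outer integration collapses to $W$.

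Given the representation, the orthogonality (It\^o isometry) of the compensated Poisson integral yields
\[
	\Var[F] = \Ex\bigl[(M_1 - M_0)^2\bigr] = \lambda \int_W \int_0^1 \hat \Ex\bigl[G(x,t,M)^2\bigr]\, dt\, dx,
\]
which is the stated formula upon substituting the identification of $G$. The main obstacle is the rigorous justification of the representation and, crucially, the identification of the integrand for the nonstandard operator $D_{(x,t,M)}$: the cleanest route is either (a) to derive the formula from the Fock-space chaos decomposition of Last--Penrose for Poisson functionals applied to $F = f \circ T$, then verify that the first-order difference operator, once conditioned on $\cF_t$, reduces to $D_{(x,t,M)}$ thanks to the birth-time-preserving property of $T$; or (b) to first establish the representation on a dense class of tame functionals (cylindrical in finitely many atoms), identify $G$ there by a direct computation of the jumps of $M_t$, and then extend by $L^2$-continuity in $F$.
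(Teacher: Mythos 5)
This lemma is not proved in the paper at all: it is quoted verbatim from Theorem~5.1 of \cite{Last-Nestmann-Schulte-2018}, so there is no internal proof to compare against. Your strategy --- a Clark--Ocone/martingale representation along the birth-time coordinate followed by the It\^o isometry for the compensated marked Poisson measure --- is in fact the same route taken in that reference, so as a plan it is aimed correctly, and the reduction of the spatial integral to $W$ (since atoms outside $W$ do not affect $f(T(\hat \eta|_W))$) is fine.

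However, as a proof your proposal has a genuine gap exactly at the step you flag as ``the main obstacle'', and the heuristic you offer for it is not correct as stated. Inserting an atom $(x,t,M)$ does preserve the ranks of points born \emph{before} $t$, but it shifts by one the rank of every point born \emph{after} $t$ in the lattice cube containing $x$; by the marking rule of $T$, the edge marks of all pairs whose older endpoint is such a point are re-indexed. Hence $T(\eta \cup \{(x,t,M)\}) \setminus \{x\}$ is \emph{not} pathwise equal to $T(\eta)$, and the jump of the martingale $M_t = \Ex[F \mid \cF_t]$ is not pathwise given by $D_{(x,t,M)}(\hat \eta|_W)$. The identification of the integrand as $\Ex[D_{(x,t,M)}(\hat \eta|_W) \mid \hat \eta_t]$ only holds because, \emph{conditionally on} $\hat \eta_t$, the re-indexed marks of the later-born points are again i.i.d.\ uniform and independent of the past, so that $T(\hat\eta \cup \{(x,t,M)\})\setminus\{x\}$ and $T(\hat\eta)$ have the same conditional law given $\hat\eta_t$; this conditional-distribution argument, together with a properly quoted martingale representation theorem for the marked process (or the Fock-space/chaos route you mention as option (a)), is precisely the content of the cited Theorem~5.1 and is what your sketch defers rather than supplies. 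Without carrying out that identification (and the $L^2$-extension from a dense class in option (b)), the displayed representation with integrand $G(x,t,M)=\Ex[D_{(x,t,M)}(\hat\eta|_W)\mid\cF_t]$ is asserted, not proved.
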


\begin{proof}[Proof of Lemma~{\rm\ref{lem:limit-variance}}]
Similar to the convergence \eqref{Lq-convergence}, the weak stabilization and the moment condition imply that 
	\[
		\hat \Ex [| D_{(\zero, t, M)} (\hat \eta |_{W_n}) - \Delta_t|^{2}] \to 0,  
	\]
along any sequence of cubes $\{W_n\}$ tending to $\R^d$. It follows that 
	\[
		 \int_0^1 \hat \Ex \left[ |D_{(\zero, t, M)} (\hat \eta|_{W_n}) - \Delta_t|^2 \right] dt  \to 0.
	\]
Then, by using Jensen's inequality for conditional expectation, we obtain that 
\[
	\int_0^1 \hat \Ex \left[ | \Ex[ D_{(\zero, t, M)} (\hat \eta|_{W_n}) | \hat \eta_t ] - \Ex[ \Delta_t | \hat \eta_t ] |^2 \right] dt  \to 0.
\]	
Consequently, 
\[
	h(W_n) : =  \int_0^1 \hat \Ex \left[ \Ex[ D_{(\zero, t, M)} (\hat \eta|_{W_n}) | \hat \eta_t ]^2 \right] dt  \to  \int_0^1 \hat \Ex \left[ \Ex[ \Delta_t | \hat \eta_t ]^2 \right] dt =:a^2.
\]
The convergence holds for any sequence of cubes $\{W_n\}$ tending to $\R^d$. Thus, it is straightforward to show that for any $\varepsilon > 0$, there is a radius $r > 0$ such that 
\begin{equation}\label{epsilon}
	| h(V) - a^2 | < \varepsilon, \quad \text{if  $B_r(\zero) \subset V$.}
\end{equation}
Here $B_r(\zero) = \{x \in \R^d: |x| \le r\}$ denotes the closed ball centered at $\zero$ of radius $r$.

It now follows from the variance formula and the translation invariance that
\begin{align*}
	\Var[f(T(\hat \eta|_{W}))] &= \lambda \int_W  \int_0^1 \hat \Ex[\Ex[D_{(x,t,M)} (\hat \eta|_{W}) | \hat \eta_t]^2] dt  dx \\
	&= \lambda \int_W h(W-x) dx.
\end{align*}
For given $\varepsilon > 0$, take $r$ such that the condition \eqref{epsilon} holds. Then divide the above integral into two parts according to $B_r(\zero) \subset W-x$ or not. For the part with $B_r(\zero) \subset W-x$, the integrand $h(W-x)$ is different from $a^2$ by at most $\varepsilon$, while the integral over the other part divided by $|W|$ clearly vanishes as $W$ tends to $\R^d$. Consequently, 
\[
    	\frac{\Var[f(T(\hat \eta|_{W}))]}{|W|} = \frac{\lambda}{|W|} \int_W h(W-x) dx  \to \lambda a^2=\sigma^2 \quad \text{as} \quad W \to \R^d,
\]
which proves the desired convergence \eqref{limiting-variance}.

Next, we show the positivity of $\sigma^2$ under the condition that $\hat \Prob(\Delta_1 \neq 0) > 0$. Our aim is to show the continuity of $\hat \Ex[\Ex[\Delta_t |\hat \eta_t]^2]$ at $t=1$, that is, 
\begin{equation}\label{continuity-at-1}
	\hat \Ex[\Ex[\Delta_t |\hat \eta_t]^2]  \to \hat \Ex[\Delta_1^2] \quad \text{as} \quad t \to 1.
\end{equation}
This clearly implies the positivity of $\sigma^2$, because $\Ex[\Delta_1^2] >0$. We will show the continuity through several steps.

{\bf Step 1.}
Recall that as $W$ tends to $\R^d$,  
\[
	\hat \Ex [| D_{(\zero, t, M)} (\hat \eta |_W) - \Delta_t|^{2}] \to 0,
\]
and moreover the expectation does not depend on $t$, if $W$ is a lattice cube.

{\bf Step 2.}
For any finite cube $W$,
\[
	D_{(\zero, t, M)}(\hat \eta|_{W}) \to D_{(\zero,1,M)}(\hat \eta|_{W}) \quad \text{in probability as} \quad t \to 1.
\]
This is because the two functionals coincide on the event that there is no point in $W \times [t,1] \times \bM$ whose probability tends to $1$ as $t \to 1$. Then the convergence in $L^{2}$ holds as a consequence of the moment condition.

{\bf Step 3.} The results in Step 1 and Step 2, together with the triangular inequality, imply that as $t \to 1$,
\[
	\Delta_t  \to \Delta_1 \quad \text{in $L^{2}$.}
\]
Then using Jensen's inequality for conditional expectation, we obtain that
\[
	\hat \Ex[\Ex[\Delta_t - \Delta_1 | \hat \eta_t]^2] \to 0 \quad \text{as} \quad t \to 1,
\]
and thus,
\[
	\hat \Ex[\Ex[\Delta _t | \hat \eta_t]^2] - \hat \Ex [\Ex[\Delta_1 | \hat \eta_t]^2]   \to 0 \quad \text{as} \quad t \to 1.
\]

{\bf Step 4.} We claim that for any finite cube $W$,
\[
	\Ex[D_{(\zero,1,M)} (\hat \eta |_{W}) | \hat \eta_t] \to D_{(\zero,1,M)}(\hat \eta|_W) \quad \text{in probability, and then in $L^{2}$ as }t \to 1.
\]
It suffices to show that for each $M$, the above convergence holds in probability with respect to $\Prob$. Let $M$ be fixed. First we write the conditional expectation as 
\begin{align*}
	\Ex[D_{(\zero,1,M)} (\hat \eta |_{W}) | \hat \eta_t] = \Ex^{\hat \eta^t} [D_{(\zero, 1, M)} (\hat \eta_t |_W + \hat \eta^t|_W)],
\end{align*}
where $\Ex^{\hat \eta^t}$ denotes the expectation  with respect to a Poisson point process $\hat \eta^t$ on $\R^d \times [t,1] \times \bM$ independent of $\hat \eta_t$. Then by expressing the conditional expectation  further as 
\[
	D_{(\zero, 1, M)} (\hat \eta_t |_W) \Prob(A_t) +  \Ex^{\hat \eta^t} [D_{(\zero, 1, M)} (\hat \eta_t |_W + \hat \eta^t|_W) \one_{A_t^c}],
\]
where $A_t$ is the event that $\hat \eta^t$ has no point in $W\times [t,1]\times \bM$, we see that
\begin{align*}
	&\Ex[D_{(\zero,1,M)} (\hat \eta |_{W}) | \hat \eta_t] - D_{(\zero, 1, M)} (\hat \eta_t |_W) \\
	&=D_{(\zero, 1, M)} (\hat \eta_t |_W) (\Prob(A_t) -1) +  \Ex^{\hat \eta^t} [D_{(\zero, 1, M)} (\hat \eta_t |_W + \hat \eta^t|_W) \one_{A_t^c}] \\
	&\to 0 \quad \text{in probability as } t \to 1.
\end{align*}
Here H\"older's inequality has been used to show the second term in the second last equation converges to zero.
In addition, $D_{(\zero,1,M)} (\hat \eta_t |_{W}) \to D_{(\zero,1,M)}(\hat \eta|_W)$ in probability (by the same reason as in Step 2). These imply the desired convergence.

{\bf Step 5.} Take the limit as $W\to \R^d$ in Step 4, we obtain 
\[
	\Ex[\Delta_1 | \hat \eta_t] \to \Delta_1 \quad \text{in $L^{2}$ as } t \to 1,
\]
which, together with Step 3, yields the continuity~\eqref{continuity-at-1}. The proof is complete.
\end{proof}

We also need the following Poincar\'e inequality which is a direct consequence of the variance formula by using Jensen's inequality.

\begin{lemma}[\cite{Last-Nestmann-Schulte-2018}]
Let $f$ be a functional defined on finite subsets of $(\R^d)^{[2]}\times [0,1]$. Assume that 
\[
	\Ex[f(T(\hat \eta|_W))^2] < \infty.
\]
Then the following Poincar\'e inequality holds
\[
	\Var[f(T(\hat \eta|_W))] \le \lambda \int_W \hat \Ex[D_{(x,1,M)}(\hat \eta|_W)^2] dx.
\]

\end{lemma}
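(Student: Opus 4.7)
The strategy is to derive the Poincar\'e inequality directly from the exact variance formula of Last, Nestmann and Schulte by bounding each conditional expectation through Jensen and then eliminating the $t$-dependence.

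First I would start from the identity
\[
\Var[f(T(\hat \eta|_W))] = \lambda \int_{W} \int_0^1 \hat\Ex\bigl[\Ex[ D_{(x,t,M)}(\hat \eta|_{W}) \mid \hat \eta_t]^2\bigr]\, dt\, dx
\]
already recalled in the excerpt. Applying the conditional Jensen inequality $\Ex[X\mid\mathcal{G}]^2\le \Ex[X^2\mid\mathcal{G}]$ pointwise (with $\mathcal{G}$ the $\sigma$-field generated by $\hat\eta_t$) and then using the tower property under $\hat\Ex$ gives
\[
\hat\Ex\bigl[\Ex[ D_{(x,t,M)}(\hat \eta|_{W}) \mid \hat \eta_t]^2\bigr] \le \hat\Ex\bigl[D_{(x,t,M)}(\hat \eta|_{W})^2\bigr].
\]

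The second step is to get rid of the $t$ in the bound. This is the observation (i) invoked in the proof of Lemma~\ref{lem:add-t}: under $\hat\Prob$, adding a point $(x,t,M)$ to $\hat\eta|_W$ produces a marked graph $T(\hat\eta|_W\cup\{(x,t,M)\})$ whose distribution does not depend on $t$, because the role of $t$ is only to place the new point in the birth-time order and thereby select which entries of the i.i.d.\ uniform mark array $M$ label its incident edges; the joint law of those edge marks is uniform regardless of $t$. Consequently the distribution of $D_{(x,t,M)}(\hat\eta|_W)$ under $\hat\Prob$ coincides with that of $D_{(x,1,M)}(\hat\eta|_W)$, so
\[
\hat\Ex\bigl[D_{(x,t,M)}(\hat \eta|_{W})^2\bigr] = \hat\Ex\bigl[D_{(x,1,M)}(\hat \eta|_{W})^2\bigr]
\]
for every $t\in[0,1]$. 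Plugging this into the variance formula and integrating the (now constant in $t$) bound over $t\in[0,1]$ yields the claimed inequality.

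There is no real obstacle here beyond justifying the $t$-invariance in distribution, which is the same elementary observation that underlies the argument for Lemma~\ref{lem:add-t}; a reader who accepts that step accepts the Poincar\'e inequality as an immediate Jensen-type corollary of the variance formula. Under the stated hypothesis $\Ex[f(T(\hat\eta|_W))^2]<\infty$, the integrand on the right-hand side is nonnegative and measurable, so no additional integrability verification is required.
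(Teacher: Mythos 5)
Your proof is correct and follows exactly the route the paper intends: the paper presents this lemma as ``a direct consequence of the variance formula by using Jensen's inequality,'' and your argument is precisely that, with the additional (correct and necessary) step of invoking the $t$-invariance in distribution of $T(\hat\eta|_W\cup\{(x,t,M)\})$ --- the same observation (i) used in the proof of Lemma~\ref{lem:add-t} --- to replace $\hat\Ex[D_{(x,t,M)}(\hat\eta|_W)^2]$ by $\hat\Ex[D_{(x,1,M)}(\hat\eta|_W)^2]$ before integrating out $t$. Nothing further is needed.
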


\begin{proof}[Proof of Theorem~{\rm\ref{thm:main}}]
We sketch some key steps in the argument because it is similar to the proof of Theorem~3.1 in \cite{Trinh-2019}. Let us consider the sequence of cubes $W_n := [-n^{1/d}/2, n^{1/d}/2)^d$, where $n$ needs not be an integer number.
Because of the translation invariance, it suffices to show that as $n \to \infty$,
\begin{equation}\label{aim}
	\frac{f(T(\hat \eta|_{W_n})) - \Ex[f(T(\hat \eta|_{W_n}))]}{\sqrt{n}} \dto \Normal(0, \sigma^2), \quad \frac{\Var[f(T(\hat \eta|_{W_n}))]}{n} \to \sigma^2,
\end{equation}
for some $\sigma^2 \ge 0$. For $L>0$ with $L^{1/d}$ an integer number and for each $n$, divide the cube $W_n$ according to the lattice $L^{1/d} \Z^d$ and let $\{C_i\}_{i = 1}^{\ell_n}$ be the lattice cubes entirely contained in $W_n$. Then it follows from the construction of the graph $T$ that 
\begin{align*}
	X_{n,L} &:= \frac{1}{\sqrt n} \sum_{i = 1}^{\ell_n} \Big(  f(T(\hat \eta|_{C_i})) - \Ex[f(T(\hat \eta|_{C_i}))]	\Big) \\
	& = \frac{1}{\sqrt n} \sum_{i = 1}^{\ell_n} \Big(  f(T(\hat \eta|_{W_n})|_{C_i} ) - \Ex[f(T(\hat \eta|_{W_n})|_{C_i})]	\Big).
\end{align*}

The first expression shows that $X_{n,L}$ is a sum of i.i.d.~(independent identically distributed) random variables. Thus, for fixed $L$, a central limit theorem for $\{X_{n, L}\}$ holds, that is, 
\begin{equation}\label{CLT-XnL}
	X_{n, L} \dto \Normal(0, \sigma_L^2), \quad \Var[X_{n, L}] \to \sigma_L^2 = L^{-1} \Var[f(T(\hat \eta |_{C_i}))].
\end{equation}

The second expression helps us to make use of the Poincar\'e inequality
\begin{align*}
	\Var &
	\left [
		\frac{f(T(\hat \eta|_{W_n})) - \Ex[f(T(\hat \eta|_{W_n}))]}{\sqrt{n}} - X_{n, L}  
	\right ] \\
	&\le \frac{\lambda}{n} \int_{W_n}  \hat \Ex\bigg[ \bigg|D_{(x, 1, M)}(\hat \eta|_{W_n})  - \sum_{i=1}^{\ell_n} D_{(x,1,M)}(\hat \eta|_{C_i})\I{( x \in C_i}) \bigg|^2 \bigg] dx\\
	&=\frac{\lambda}{n} \int_{W_n \setminus \cup_i C_i}  \hat \Ex  [  |D_{(x, 1, M)}(\hat \eta|_{W_n})|^2 ] dx \\
	&\quad + \sum_{i = 1}^{\ell_n} \int_{C_i}  \hat \Ex  [  |D_{(x, 1, M)}(\hat \eta|_{W_n})  -   D_{(x,1,M)}(\hat \eta|_{C_i})    |^2  ] dx.
\end{align*}
Here $\I$ denotes the indicator function.
Then using the weak stabilization together with the moment condition, we can argue in exactly the same way as in  the proof of Theorem~3.1 in \cite{Trinh-2019} to show that 
\begin{equation}\label{CLT-approximation}
	\lim_{L \to \infty} \limsup_{n \to \infty} \Var
	\left [
		\frac{f(T(\hat \eta|_{W_n})) - \Ex[f(T(\hat \eta|_{W_n}))]}{\sqrt{n}} - X_{n, L}  
	\right ] = 0.
\end{equation}
The two equations \eqref{CLT-XnL} and \eqref{CLT-approximation} imply our desired CLT~\eqref{aim} (see \cite[Lemma 2.2]{Trinh-2019}). The proof is complete.
\end{proof}

\begin{corollary}\label{cor:multi}
	Assume that functionals $\{f_i\}_{i = 1}^m$ are weakly stabilizing and satisfy the moment condition.
	Then as the sequence of cubes $W$'s tends to $\R^d$,
	\[
		\left( \frac{f_i(T(\hat \eta|_{W})) - \Ex[f_i(T(\hat \eta|_{W}))]}{\sqrt{|W|}}\right)_{i=1}^m \dto \Normal(0,  \Sigma),
	\]
where $\Sigma= (\sigma_{ij})_{i,j=1}^m$ is a nonnegative definite matrix,
\begin{equation}\label{sigma-ij}
	\sigma_{ij} = \lim_{n \to \infty} \frac{\Cov [f_i(T(\hat \eta|_{W})), f_j(T(\hat \eta|_{W}))]}{|W|}.
\end{equation}
Here $ \Normal(0,  \Sigma)$ denotes the multidimensional Gaussian distribution with mean zero and covariance matrix $\Sigma$.
\end{corollary}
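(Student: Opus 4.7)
The proof proceeds by the Cramér--Wold device, reducing the joint statement to the one-dimensional CLT of Theorem~\ref{thm:main}. Concretely, for any fixed vector $\mathbf{a}=(a_1,\dots,a_m)\in\R^m$, set $f_{\mathbf{a}}:=\sum_{i=1}^m a_i f_i$. The goal will be to show that $f_{\mathbf{a}}$ itself is weakly stabilizing and satisfies the moment condition~\eqref{moment-condition}, so that Theorem~\ref{thm:main} can be applied directly to the single functional $f_{\mathbf{a}}$.

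The first step is to observe that the add-one cost operator is linear: for every $W$ and every realization,
\[
	D_{(\zero,1,M)}f_{\mathbf{a}}(\hat\eta|_W)=\sum_{i=1}^m a_i\, D_{(\zero,1,M)}f_i(\hat\eta|_W).
\]
Since each $D_{(\zero,1,M)}f_i(\hat\eta|_{W_n})\hatPto\Delta_1^{(i)}$ along any sequence of cubes tending to $\R^d$, the linear combination converges in probability to $\Delta_1^{(\mathbf{a})}:=\sum_i a_i\Delta_1^{(i)}$, so $f_{\mathbf{a}}$ is weakly stabilizing. Translation invariance is immediate from linearity. The moment condition is verified via Minkowski's inequality:
\[
	\bigl(\hat\Ex[|D_{(\zero,1,M)}f_{\mathbf{a}}(\hat\eta|_W)|^p]\bigr)^{1/p}\le \sum_{i=1}^m|a_i|\bigl(\hat\Ex[|D_{(\zero,1,M)}f_i(\hat\eta|_W)|^p]\bigr)^{1/p},
\]
whose right-hand side is bounded uniformly in cubes $W\ni\zero$ by the hypotheses on the individual $f_i$'s.

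Applying Theorem~\ref{thm:main} to $f_{\mathbf{a}}$ yields a scalar CLT together with convergence of its normalized variance to a limit $\tau^2(\mathbf{a})\ge 0$. By Lemma~\ref{lem:add-t} applied componentwise, each $D_{(\zero,t,M)}f_i(\hat\eta|_{W_n})$ converges in probability to some $\Delta_t^{(i)}$, and the linearity argument above shows that the corresponding limit for $f_{\mathbf{a}}$ is $\Delta_t^{(\mathbf{a})}=\sum_i a_i\Delta_t^{(i)}$. Plugging this into the variance formula \eqref{limiting-variance} of Lemma~\ref{lem:limit-variance} and expanding the square gives
\[
	\tau^2(\mathbf{a})=\lambda\int_0^1\hat\Ex\Bigl[\Bigl(\sum_{i=1}^m a_i\Ex[\Delta_t^{(i)}|\hat\eta_t]\Bigr)^{\!2}\Bigr]dt=\sum_{i,j=1}^m a_i a_j\,\sigma_{ij},
\]
where $\sigma_{ij}:=\lambda\int_0^1\hat\Ex[\Ex[\Delta_t^{(i)}|\hat\eta_t]\Ex[\Delta_t^{(j)}|\hat\eta_t]]dt$. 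A polarization identity (comparing $\tau^2(e_i+e_j)$ with $\tau^2(e_i)$ and $\tau^2(e_j)$) then shows that $\Cov[f_i(T(\hat\eta|_W)),f_j(T(\hat\eta|_W))]/|W|\to\sigma_{ij}$, matching the formula \eqref{sigma-ij}.

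Finally, since the CLT for $f_{\mathbf{a}}$ and the formula for its limiting variance hold for every $\mathbf{a}\in\R^m$, the Cramér--Wold theorem delivers the multidimensional CLT with limit law $\Normal(0,\Sigma)$, where $\Sigma=(\sigma_{ij})$ is automatically nonnegative definite as the limit of covariance matrices. There is no real obstacle here beyond bookkeeping; the only point requiring a moment of care is the identification of the limiting covariance matrix, which is handled cleanly by the variance formula of Lemma~\ref{lem:limit-variance} combined with the polarization step above.
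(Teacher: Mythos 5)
Your proposal is correct and follows essentially the same route as the paper: the Cramér--Wold device applied to $f=\sum_i a_i f_i$, with weak stabilization and the moment condition for the linear combination following from linearity of the add-one cost (plus Minkowski), and the covariance limit \eqref{sigma-ij} identified by polarization of the variance limits. The only cosmetic difference is that you additionally write out the explicit integral formula for $\sigma_{ij}$ via Lemma~\ref{lem:limit-variance}, whereas the paper simply polarizes the variance convergence for $f_i\pm f_j$; both are fine.
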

\begin{proof}
Observe that the desired multidimensional CLT follows, if we can show that for any ${\mathbf a} =(a_1, \dots, a_m) \in \R^m$, the following hold for 
	$
		f = \sum_{i = 1}^m a_i f_i,
	$ 
\[
\frac{f(T(\hat \eta|_{W})) - \Ex[f(T(\hat \eta|_{W}))]}{\sqrt{|W|}} \dto \Normal(0,  \sigma^2_f), \quad  \frac{\Var[f(T(\hat \eta|_{W}))]}{|W|} \to   \sigma_f^2 = {\mathbf a}^t \Sigma {\mathbf a}.
\]

The functional $f$ is clearly weakly stabilizing and satisfies the moment condition, and hence a CLT for $f$ follows from Theorem~\ref{thm:main}. To see the convergence of the covariance and the formula $\sigma_f^2 = {\mathbf a}^t \Sigma {\mathbf a}$, it remains to show the convergence~\eqref{sigma-ij}. However, it is an easy consequence of the convergence of variances when applying Theorem~\ref{thm:main} to the functionals $(f_i \pm f_j)$ by noting that $\Cov [X, Y] = \frac{1}{4} (\Var[X+Y]  - \Var[X-Y])$. The proof is complete.
\end{proof}

We conclude this sub-section by discussing further equivalent conditions for the weak stabilization. Consider the add one-cost functional in a slightly different way
\[
	\tilde D_{(\zero,1,M)} (W) = f(T(\hat \eta \cup \{(\zero,1,M)\} ) |_{W}) - f(T(\hat \eta) |_{W}) .
\]
Here we first construct the infinite graph $T(\hat \eta \cup \{(\zero,1,M)\} )$ and then take the restriction. Its advantage is the increasing property of a sequence of graphs.
The two add-one cost functionals coincide, if $W$ is a union of cubes from the collection $\{B_k\}$.
\begin{proposition}\label{prop:D-tilde}
	The following are equivalent 
	\begin{itemize}
	 \item[\rm (i)] the functional $f$ is weakly stabilizing; 
	 \item[\rm (ii)] for any sequence of cubes $\{W_n\}$ tending to $\R^d$,
	 \[
	 	\tilde D_{(\zero,1,M)} (W_n) \hatPto \Delta_1; 
	 \]
	 \item[\rm (iii)] the sequence $\{\tilde D_{(\zero,1,M)} (W_n)\}$ converges in probability to a limit for any sequence of increasing cubes $\{W_n\}$ tending to $\R^d$.
	 \end{itemize}
\end{proposition}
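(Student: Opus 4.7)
The three conditions are equivalent, which I would prove via the cycle $(ii) \Rightarrow (iii) \Rightarrow (i) \Rightarrow (ii)$. The implication $(ii) \Rightarrow (iii)$ is immediate, as it amounts to restricting attention to increasing sequences of cubes. The structural observation underlying both non-trivial implications is that, on a lattice cube $V$ (a finite union of cells from $\{B_k\}$), the two add-one cost functionals coincide:
\[
\tilde D_{(\zero,1,M)}(V) = D_{(\zero,1,M)}(\hat \eta|_V) \quad \hat \Prob\text{-a.s.},
\]
because $T(\hat\eta)|_V = T(\hat\eta|_V)$ in that case; the ranking of the points inside each cell $B_k \subset V$ is unaffected by points outside $V$.

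For $(iii) \Rightarrow (i)$, I would first observe that the interleaving argument in the proof of Proposition~\ref{prop:increasing-sequence} is purely formal: it only uses that any two increasing sequences of cubes tending to $\R^d$ can be merged into a common increasing sequence. Applied verbatim to $\tilde D$, it produces a random variable $\tilde \Delta$ such that $\tilde D_{(\zero,1,M)}(W_n) \hatPto \tilde\Delta$ along every sequence of cubes tending to $\R^d$. Specializing to sequences of lattice cubes and invoking the identity above yields $D_{(\zero,1,M)}(\hat\eta|_{V_n}) \hatPto \tilde\Delta$ along every such sequence. To conclude $(i)$, one extends this to arbitrary sequences of cubes via a sandwich: for any cube $W \ni \zero$, let $V^-(W) \subset W \subset V^+(W)$ be the largest (resp.\ smallest) lattice cube contained in (resp.\ containing) $W$ with $\zero \in V^-(W)$; both tend to $\R^d$ as $W$ does. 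Using the construction of $T$, one couples $T(\hat\eta|_W)$, $T(\hat\eta|_{V^-(W)})$, and $T(\hat\eta|_{V^+(W)})$ so that the three graphs agree on all edges whose older endpoint sits in a cell $B_k \subset V^-(W)$, and a subsequence argument forces $D_{(\zero,1,M)}(\hat\eta|_{W_n}) \hatPto \tilde\Delta$.

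For $(i) \Rightarrow (ii)$, the argument is dual. Condition $(i)$ specialized to lattice cube sequences, together with the identity $\tilde D = D$ on such cubes, gives $\tilde D_{(\zero,1,M)}(V_n) \hatPto \Delta_1$ along every sequence of lattice cubes. The interleaving argument applied to $\tilde D$ then extends this convergence to arbitrary sequences of cubes, yielding $(ii)$ with the same limit $\Delta_1$.

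The main obstacle is the sandwich/coupling step in the $(iii) \Rightarrow (i)$ direction: upgrading convergence along lattice cube sequences to convergence along arbitrary cube sequences without any additional stability hypothesis on $f$. The opposite direction $(i) \Rightarrow (ii)$ is by comparison a direct application of the formal interleaving argument to $\tilde D$ combined with the equality $\tilde D = D$ on lattice cubes.
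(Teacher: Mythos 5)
Your structural observations---that $\tilde D_{(\zero,1,M)}(V) = D_{(\zero,1,M)}(\hat\eta|_V)$ almost surely when $V$ is a lattice cube, and that the interleaving argument of Proposition~\ref{prop:increasing-sequence} is purely formal and applies to $\tilde D$---are correct and are also used in the paper. But both of your non-trivial implications hinge on a step that is not justified and, as described, does not work: comparing $D$ and $\tilde D$ on cubes that are \emph{not} lattice cubes. In the direction (iii)$\Rightarrow$(i), the fact that $T(\hat\eta|_W)$, $T(\hat\eta|_{V^-})$ and $T(\hat\eta|_{V^+})$ agree on all edges whose older endpoint lies in a cell $B_k\subset V^-$ is automatic from the construction of $T$, but it gives no control on differences of $f$-values: $f$ is an arbitrary measurable translation-invariant functional with no monotonicity or locality, so a ``sandwich'' has nothing to bite on, and no subsequence argument can extract convergence of $D_{(\zero,1,M)}(\hat\eta|_{W_n})$ from it. In the direction (i)$\Rightarrow$(ii) the same gap is present but hidden: the interleaving argument only merges sequences along which convergence is \emph{already known}, so knowing $\tilde D(V_n)\hatPto\Delta_1$ along lattice-cube sequences does not let you interleave with a general cube sequence $\{W_n\}$---the merged increasing sequence contains non-lattice cubes, for which you have no hypothesis on $\tilde D$.

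The missing ingredient is the paper's identity \eqref{VW}: for a lattice cube $V\subset W$ with $\zero\in V$,
\[
D_{(\zero,1,M)}(\hat\eta|_W)-D_{(\zero,1,M)}(\hat\eta|_V)\ \overset{d}{=}\ \tilde D_{(\zero,1,M)}(W)-\tilde D_{(\zero,1,M)}(V),
\]
which holds because both increments are the \emph{same} measurable function of a marked graph, evaluated at $T(\hat\eta|_W\cup\{(\zero,1,M)\})$ and at $T(\hat\eta\cup\{(\zero,1,M)\})|_W$ respectively, and these two graphs are equal in distribution. Since convergence in probability to $0$ depends only on the distribution, this identity transfers the statement ``the increment from $V_n$ to $W_n$ vanishes in probability'' between the two add-one costs; combined with the equality $\tilde D = D$ on lattice cubes and the convergence along $\{V_n\}$, it closes each implication in two lines. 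Without \eqref{VW}, or an equivalent distributional comparison, your argument cannot bridge from lattice cubes to general cubes in either direction.
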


\begin{proof}
The equivalence of (ii) and (iii) is quite similar to Proposition~\ref{prop:increasing-sequence}, and hence its proof is omitted. We now prove the equivalence of (i) and (ii).

Let $V \subset W$ be a lattice cube.
Note that the two graphs  $T(\hat \eta |_W \cup \{(\zero,1,M)\})$ and  $T(\hat \eta \cup \{(\zero, 1, M)\})|_W$ have the same distribution. In addition, since $V$ is a lattice cube, $D_{(\zero,1,M)}(\hat \eta |_W) - D_{(\zero,1,M)}(\hat \eta |_V)$ can be written as a function of $T(\hat \eta \cup \{(\zero, 1, M)\})|_W$ by restriction. In the same manner, $\tilde D_{(\zero,1,M)} (W) - \tilde D_{(\zero,1,M)} (V)$ can be written as the same function of $T(\hat \eta \cup \{(\zero, 1, M)\})|_W$. Consequently, 
\begin{equation}\label{VW}
	D_{(\zero,1,M)}(\hat \eta |_W) - D_{(\zero,1,M)}(\hat \eta |_V) \overset{d}{=} \tilde D_{(\zero,1,M)} (W) - \tilde D_{(\zero,1,M)} (V).
\end{equation}

	Let us show that (i) implies (ii). Assume that (i) holds.  Let $\{W_n\}$ be any sequence of cubes tending to $\R^d$. We take a sequence of lattice cubes $\{V_n\}$ tending to $\R^d$ such that $V_n \subset W_n$, for each $n$. We assume without loss of generality that $\zero \in V_n$, for any $n$. 
The condition~(i) implies that 
\[
	D_{(\zero,1,M)}(\hat \eta |_{W_n}) \hatPto \Delta_1, \quad 	D_{(\zero,1,M)}(\hat \eta |_{V_n}) \hatPto \Delta_1.
\]
It then follows that 
\[
	D_{(\zero,1,M)}(\hat \eta |_{W_n}) - D_{(\zero,1,M)}(\hat \eta |_{V_n}) \hatPto 0,
\]
and hence 
\begin{equation}\label{tildeDW}
	\tilde D_{(\zero,1,M)} (W_n) - \tilde D_{(\zero,1,M)} (V_n) \hatPto 0,
\end{equation}
by the identity in distribution~\eqref{VW}. In addition, since $V_n$ is a lattice cube, 
	\begin{equation}\label{tildeDV}
		 \tilde D_{(\zero,1,M)} (V_n) = D_{(\zero,1,M)}(\hat \eta |_{V_n}) \hatPto \Delta_1. 
	\end{equation}
Adding the two equations \eqref{tildeDW} and \eqref{tildeDV}, we get the desired convergence in the statement of (ii). The converse can be proved similarly.
The proof is complete.
\end{proof}

\subsection{CLT for random connection models}

Let $\cP$ be a homogeneous Poisson point process on $\R^d$ with density $\lambda > 0$. Let 
	$\varphi \colon \R^d \to [0,1]$
be a measurable, symmetric function, that is, $\varphi(x) = \varphi(-x)$.
	Given a configuration $\cP$ which is a locally finite subset in $\R^d$ (almost surely), connect any two points $x, y \in \cP$ independently with probability $\varphi(x-y)$. (In general, we can consider a connectivity function $\varphi \colon \R^d \times \R^d \to [0,1]$ with $\varphi(x,y) = \varphi(y,x)$ the probability of connecting two points $x$ and $y$. The model here is the case where the translation invariance is assumed.) The resulting graph,  denoted by $G(\cP)$, is called a random connection model with parameters $(\lambda, \varphi)$. If we take $\varphi$ as
\[
	\varphi(x) = \begin{cases}
		1,	& |x| \le r,\\
		0,	& \text{otherwise},
	\end{cases} 
\]
for some $r > 0$, then the RCM  $G(\cP)$ reduces to  a random geometric graph.

	The graph $G(\cP)$ can be generated by using the random graph with marks in the previous section as follows \cite{Last-Nestmann-Schulte-2018}. Let $\hat \eta$ be a Poisson point process on $\R^d \times [0,1] \times [0,1]^{\N \times \N}$ with the intensity measure $\lambda \ell_d \otimes \ell \otimes \Q$. We regard $\cP$ as the projection of $\hat \eta$ to $\R^d$. Then define the graph $G(\cP)$ as the one with the vertex set $\cP$, and edges $\{x, y\}$, if $
		u < \varphi(x-y)$,
for $(x, y, u) \in T(\hat \eta)$.  In other words, $G(\cP) = \iota(T(\hat \eta))$ is the image of $T(\hat \eta)$ under some mapping $\iota$ defined on $\bN((\R^d)^{[2]} \times [0,1])$.

	For a bounded subset $W \subset \R^d$, let $G(\cP)|_W$ be the induced subgraph obtained from $G(\cP)$  by restricting the graph on the vertex set in $\cP|_W$. Note that $G(\cP)|_W$ has the same distribution with the graph $G(\cP|_W)$ generated by connecting a pair $x, y \in \cP|_W$ with probability $\varphi(x-y)$ independent of the others. Let $f$ be a functional defined on finite graphs. Then 
\[
	f(G(\cP)|_W) = f(\iota (T(\hat \eta) |_{W} )).
\]
Clearly, the functional $f \circ \iota$ is translation invariant. 

The functional $f$ is said to be weakly stabilizing on $G(\cP)$ if $f \circ \iota$ is weakly stabilizing as in Definition~\ref{defn:weakly-stabilizing}. In this model, this concept is equivalent to the following. Let $G(\cP \cup \{\zero\})$ be a random graph obtained from $G(\cP)$ by adding the vertex $\{\zero\}$ and new edges $(\zero, x), x \in \cP$ independently with probability $\varphi(x)$. For a bounded subset $W \subset \R^d$, let  
	\[
		D_\zero f (W) = f( G(\cP \cup \{\zero\})|_{W})  - f( G(\cP)|_{W})
	\]
be the add-one cost of $f$. Then using equivalent conditions in Proposition~\ref{prop:D-tilde}, the functional $f$ is weakly stabilizing on $G(\cP)$, if and only if one of the following two conditions holds
\begin{itemize}
	\item[(i)] there is a random variable $\Delta$ such that 
	\[
		D_\zero f (W_n) \Pto \Delta,
	\]
for any sequence of cubes $\{W_n\}_{n = 1}^\infty$ tending to $\R^d$;

\item[(ii)] for any increasing sequence of cubes $\{W_n\}$, the sequence $\{D_\zero f(W_n)\}$ converges in probability to a limit.
\end{itemize}

The following result follows directly from Theorem~\ref{thm:main} and Corollary~\ref{cor:multi}.
\begin{theorem}\label{thm:homo-mark}
\begin{itemize}
\item[\rm(i)]	Assume that a functional $f$ is weakly stabilizing on $G(\cP)$. Assume further that for some $p > 2$,
			\begin{equation}\label{p-moment}
				\sup_{\zero \in W:\text{cube}}\Ex[|D_\zero f (W)|^p] <\infty.
			\end{equation}
	Then as the sequence of cubes $W$'s tends to $\R^d$,
	\[
		\frac{f(G(\cP|_W)) - \Ex[f(G(\cP|_W))]}{\sqrt{|W|}} \dto \Normal(0,  \sigma^2), \quad  \frac{\Var[f(G(\cP|_W))]}{|W|} \to   \sigma^2.
	\]
The limiting variance is positive $(\sigma^2 >0)$, if $\Prob(\Delta \neq 0) > 0$.
\item[\rm(ii)] Assume that functionals $\{f_i\}_{i = 1}^m$ are weakly stabilizing on $G(\cP)$ and satisfy the above moment condition.	Then as the sequence of cubes $W$'s tends to $\R^d$,
	\[
		\left( \frac{f_i(G(\cP|_W)) - \Ex[f_i(G(\cP|_W))]}{\sqrt{|W|}} \right)_{i=1}^m \dto \Normal(0,  \Sigma),
	\]
where $\Sigma= (\sigma_{ij})_{i,j=1}^m$ is a nonnegative definite matrix,
\[
	\sigma_{ij} = \lim_{W \to \R^d} \frac{\Cov [f_i(G(\cP|_W)), f_j(G(\cP|_W )) ]}{|W|}.
\]
\end{itemize}
\end{theorem}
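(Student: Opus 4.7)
The plan is to reduce both parts to Theorem~\ref{thm:main} and Corollary~\ref{cor:multi} via the coupling $G(\cP) = \iota(T(\hat \eta))$ that is described immediately before the statement. Define $\tilde f := f \circ \iota$, viewed as a measurable functional on finite subsets of $(\R^d)^{[2]} \times [0,1]$. Because translations of $\R^d$ act only on the spatial components and leave the marks untouched, $\iota$ commutes with translation and hence $\tilde f$ is translation invariant. The identity $f(G(\cP)|_W) = \tilde f(T(\hat \eta)|_W)$ is immediate from the construction, so any CLT for $\tilde f(T(\hat \eta)|_W)$ transfers verbatim to $f(G(\cP)|_W)$.

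Next I would verify the two hypotheses of Theorem~\ref{thm:main} for $\tilde f$. The weak-stabilization hypothesis is built in: the paper defines $f$ to be weakly stabilizing on $G(\cP)$ precisely as weak stabilization of $\tilde f = f \circ \iota$ in the sense of Definition~\ref{defn:weakly-stabilizing}. For the moment condition, the key observation is that the add-one cost $D_{(\zero,1,M)}(\hat \eta|_W)$ associated with $\tilde f$ through \eqref{add-one} has the same distribution as $D_\zero f(W)$. Indeed, $\iota(T(\hat \eta|_W \cup \{(\zero,1,M)\}))$ has vertex set $\cP|_W \cup \{\zero\}$ with each pair $\{x,y\}$ connected independently with probability $\varphi(x-y)$, and is therefore distributed as $G(\cP \cup \{\zero\})|_W$; deleting the vertex $\zero$ and its incident edges then gives a graph distributed as $G(\cP)|_W$. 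Hence the assumption \eqref{p-moment} transfers directly to the moment condition \eqref{moment-condition} for $\tilde f$.

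With both hypotheses in place, Theorem~\ref{thm:main} applied to $\tilde f$ gives the CLT and the variance convergence in part (i); the positivity of $\sigma^2$ then follows because the limit add-one cost $\Delta_1$ of $\tilde f$ has the same distribution as $\Delta$, so $\hat \Prob(\Delta_1 \neq 0) = \Prob(\Delta \neq 0) > 0$. For part (ii), the same reduction applied simultaneously to $\{f_i \circ \iota\}_{i=1}^m$ lets us invoke Corollary~\ref{cor:multi}, yielding both the joint Gaussian limit and the covariance-convergence formula.

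The only step that deserves to be spelled out, and the main place to be careful, is the distributional identification mentioned above: one must check that appending the extra point $(\zero,1,M)$ to $\hat \eta$ supplies fresh edge marks (via the edge-marking map $T$) that are independent of, and do not disturb, the marks already assigned to edges among the previously existing points; in particular the deletion operation appearing in \eqref{add-one} must agree in law with the direct construction of $D_\zero f(W)$ on the RCM side. Once this coupling is laid out, no further analysis is required and Theorem~\ref{thm:homo-mark} follows at once from the general results of Section~2.1.
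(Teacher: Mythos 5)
Your proposal is correct and follows essentially the same route as the paper, which likewise derives this theorem directly from Theorem~\ref{thm:main} and Corollary~\ref{cor:multi} via the coupling $G(\cP)=\iota(T(\hat\eta))$, with the weak stabilization built into the definition and the moment condition transferred through the distributional identity of the two add-one costs (cf.\ Proposition~\ref{prop:D-tilde}). No gaps.
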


\section{Isomorphic subgraph counts and Betti numbers}
\subsection{Isomorphic subgraph counts}

Consider the random connection model $(\lambda, \varphi)$ with the assumption that 
\[
	0< m_\varphi=	\int_{\R^d} \varphi (x) dx < \infty.
\]

Let $A$ be a connected graph on $(k+1)$ vertices. For given $(k+1)$ distinct points $\{x_1, x_2, \dots, x_{k+1}\}$ in $\R^d$, denote by $\Gamma(x_1, x_2, \dots, x_{k+1})$ the random graph generated by independently drawing an edge between any two vertices $x_i, x_j$  with probability $\varphi(x_i - x_j)$. Let 
\[
	\psi_A(x_1, x_2, \dots, x_{k+1}) = \begin{cases}
	 \Prob(\Gamma(x_1, x_2, \dots, x_{k+1}) \simeq A),	&\text{if $\{x_i\}$ are distinct,}\\
	 0, &\text{otherwise},
	 \end{cases}
\]
where `$\simeq$' denotes the isomorphism of graphs.
Then it is clear that $\psi$ is translation invariant, that is, 
\[
	\psi_A(z + x_1, z+ x_2, \dots,z+ x_{k+1}) = \psi_A(x_1, x_2, \dots, x_{k+1}), \quad \text{for any $z \in \R^d$}.
\]

\begin{lemma}\label{lem:finite}
Let $A$ be a connected graph on $(k+1)$ vertices. Then the expected number of induced subgraphs containing the origin $\zero$ in $G(\cP \cup \{\zero\})$ isomorphic to $A$ is given by 
\begin{align}
	h_A &:= \frac{\lambda^k}{k!} \idotsint_{(\R^d)^k} \Prob(\Gamma(\zero, x_1, \dots, x_k) \simeq A) dx_1 \cdots dx_k \nonumber\\
	&=  \frac{\lambda^k}{k!} \idotsint_{(\R^d)^k} \psi_A(\zero, x_1, \dots, x_k) dx_1 \cdots dx_k < \infty. \label{hA}
\end{align}

\end{lemma}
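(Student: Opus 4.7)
The lemma contains two assertions: an explicit formula for $h_A$ as an integral, and the finiteness of that integral. My plan is to handle each part with a standard tool from Poisson point process theory and geometric graph counting.

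For the formula, the plan is to apply the multivariate Mecke (Slivnyak) formula to the Poisson process $\cP$. Write the count of induced subgraphs through $\zero$ isomorphic to $A$ as
\[
N_A = \frac{1}{k!} \sum^{\neq}_{x_1,\dots,x_k \in \cP} \I(\Gamma(\zero, x_1,\dots,x_k) \simeq A),
\]
where the sum is over ordered $k$-tuples of distinct points and the $1/k!$ compensates for ordering (by the symmetry of $\psi_A$ under permutations). Since the edge coin flips defining $\Gamma$ are independent of $\cP$, Mecke's formula yields
\[
\Ex[N_A] = \frac{\lambda^k}{k!} \idotsint \Prob(\Gamma(\zero, x_1,\dots,x_k) \simeq A)\, dx_1 \cdots dx_k,
\]
which is the claimed identity (the second equality in \eqref{hA} is just the definition of $\psi_A$ on tuples of distinct points).

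For finiteness, the main step is to dominate the integrand using the connectedness of $A$. Since $A$ is connected, the event $\{\Gamma(\zero, x_1,\dots,x_k) \simeq A\}$ forces $\Gamma$ to be connected, and hence to contain a spanning tree on $\{\zero, x_1, \dots, x_k\}$. By a union bound over the set $\mathcal{T}$ of spanning trees,
\[
\Prob(\Gamma(\zero, x_1,\dots,x_k) \simeq A) \le \sum_{T \in \mathcal{T}} \prod_{\{i,j\} \in T} \varphi(x_i - x_j),
\]
with the convention $x_0 = \zero$. The integrand is now a sum of tree-weighted products, and each tree integral can be evaluated by leaf-stripping: integrating out a leaf variable $x_v$ against its unique edge $\varphi(x_v - x_{u})$ produces a factor $m_\varphi = \int_{\R^d}\varphi(x)dx$ by translation invariance, and reduces the tree by one edge; iterating across all $k$ edges gives $m_\varphi^k$.

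Combining with Cayley's formula $|\mathcal{T}| = (k+1)^{k-1}$ yields the explicit bound
\[
h_A \le \frac{\lambda^k}{k!} (k+1)^{k-1} m_\varphi^k,
\]
which is finite by the standing assumption $m_\varphi < \infty$. The only delicate point is book-keeping in the leaf-stripping recursion (one must process leaves before their neighbors so that $m_\varphi$ actually factors out), but this is a purely combinatorial induction on the number of vertices and presents no real obstacle.
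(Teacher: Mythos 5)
Your proof is correct, and the first half (the Mecke-formula computation of the expectation) is exactly the paper's argument. For the finiteness, however, you take a noticeably different route. The paper first bounds $\Prob(\Gamma(\zero,x_1,\dots,x_k)\simeq A)$ by a union bound over all $(k+1)!$ permutations $\pi$, retaining the \emph{full} edge set $E$ of $A$ in each term, and then integrates out vertices one at a time; to do this it needs the small combinatorial claim that a connected graph has at least two vertices whose removal leaves it connected (so that one can always avoid removing the distinguished vertex $\zero$), proved via the two leaves of a spanning tree. You instead relax $\{\Gamma\simeq A\}$ to $\{\Gamma\text{ connected}\}$ immediately, union-bound over the $(k+1)^{k-1}$ spanning trees of the complete graph on $\{\zero,x_1,\dots,x_k\}$, and evaluate each tree integral exactly as $m_\varphi^k$ by stripping leaves distinct from $\zero$ (always possible since a tree has at least two leaves). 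Your version is self-contained and yields the clean explicit bound $h_A\le \frac{\lambda^k}{k!}(k+1)^{k-1}m_\varphi^k$; the paper's version keeps more information about $A$ (it bounds each embedding separately by $m_\varphi^k$, which is the form reused later for the covariance and third-moment estimates in Lemma~\ref{lem:covariance} and Theorem~\ref{thm:CLT-subgraph}). Both leaf-stripping recursions are justified by Tonelli since all integrands are nonnegative, so the book-keeping concern you raise is indeed harmless.
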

\begin{proof}
We first show that the integral in \eqref{hA} is finite. Although this result was already proved in Theorem~7.1 in \cite{Last-Nestmann-Schulte-2018}, we give here a slightly different proof. We claim that for a connected graph $A$, there are at least two vertices such that after removing each of them together with all edges connected to it, the remaining graph is still connected. Indeed, let $B$ be a spanning tree of $A$, that is, a connected subgraph of $A$ with exactly $k$ edges. Then the sum of degrees of all vertices in $B$ is $2k$, implying that at least two vertices have degree one. Note that by removing a vertex of degree one from the tree, the remaining is still a tree, which proves our claim. 

Now let $A = (V, E)$ be a connected graph on $V=[k+1]:= \{1,2, \dots, k+1\}$. The graph $\Gamma(\zero, x_1, \dots, x_k)$ is isomorphic to $A$, if there is a permutation $\pi \in \mathbb S_{k+1}$ such that $\{i,j\}$ is an edge on $A$, if and only if $\{x_{\pi_i}, x_{\pi_j}\}$ is an edge on $\Gamma(x_{k+1}=\zero, x_1, \dots, x_k)$. Therefore 
\begin{align*}
&\idotsint_{(\R^d)^k} \Prob(\Gamma(\zero, x_1, \dots, x_k) \simeq A) dx_1 \cdots dx_k  \\
&\le\sum_{\pi \in \mathbb S_{k+1}} \idotsint_{(\R^d)^k} \prod_{ \{i, j\} \in E} \varphi(x_{\pi_i} - x_{\pi_j}) dx_1 \cdots dx_{k}.
\end{align*}
Then it suffices to show that 
\[
	 \idotsint_{(\R^d)^k} \prod_{ \{i, j\} \in E} \varphi(x_{i} - x_{j}) dx_1 \cdots dx_{k} < \infty.
\]
Let $m\neq k+1$ be a vertex such that the induced subgraph $A' = (V', E')$, where $V' = V \setminus \{m\}$, is still connected. Let $n$ be a vertex connected to $m$. Then 
\begin{align*}
& \idotsint_{(\R^d)^k} \prod_{ \{i, j\} \in E} \varphi(x_{i} - x_{j}) dx_1 \cdots dx_{k} \\
 &\le \idotsint_{(\R^d)^{k-1}} \bigg( \prod_{\{i, j\} \in E'} \varphi(x_i - x_j) \bigg) \bigg( \int_{\R^d} \varphi(x_n - x_m) dx_m \bigg) \prod_{l \neq m} dx_l  \\
 &= m_\varphi \times \idotsint_{(\R^d)^{k-1}}  \prod_{\{i, j\} \in E'} \varphi(x_i - x_j)  \prod_{l \neq m} dx_l  .
\end{align*}
Since $A'$ is again a connected graph, we continue this way to see that the above integral is bounded by $(m_\varphi)^k < \infty$. 

Next by the multivariate Mecke equation (Theorem~4.4 in \cite{Last-Penrose-book}), the expected number of induced subgraphs containing the origin $\zero$ in $G(\cP \cup \{\zero\})$ isomorphic to $A$ can be written as 
\begin{align*}
	&\Ex\bigg[ \sum_{ \{x_1, x_2, \dots, x_{k} \} \subset\cP}  \psi_A(\zero, x_1, x_2, \dots, x_{k})  \bigg] \\
	&= \frac{\lambda^k}{k!} \idotsint_{(\R^d)^k} \psi_A(\zero, x_1, \dots, x_k) dx_1 \cdots dx_k,
\end{align*}
which completes the proof.
\end{proof}

The graph $A$ is said to be feasible if $h_A > 0$. Equivalently, the graph $A$ is feasible, if the probability $\Prob(\Gamma(\zero, x_1, \dots, x_k) \simeq A)$ is positive on some set in $(\R^d)^k$ with positive Lebesgue measure.  In particular, in case $\varphi \in (0,1)$, any connected graph is feasible. Let $\xi_n^{(A)}$ be the number of induced subgraphs in $G(\cP_n)$ isomorphic to $A$, where $\cP_n = \cP|_{W_n}$ with $W_n = [-\frac{n^{1/d}}2, \frac {n^{1/d}}2)^d$. By direct calculation using the Mecke formula, we can show the following asymptotic behaviors, natural extensions of those for random geometric graphs in \cite[Chapter~3]{Penrose-book}.

\begin{lemma}\label{lem:covariance}
\begin{itemize}
\item[\rm(i)]	Let $A$ be a feasible connected graph on $(k+1)$ vertices. Then as $n \to \infty$,
	\[
		\frac{\Ex[\xi_n^{(A)} ] }{n}  \to  \frac{\lambda}{k+1} h_A = \frac{\lambda^{k+1}}{(k+1)!} \idotsint_{(\R^d)^k} \Prob(\Gamma(\zero, x_{[k]}) \simeq A) dx_{[k]}.
	\]
Here $x_{[k]}$ denotes the set $\{x_1, \dots, x_k\}$ and $dx_{[k]}$ stands for $dx_1 \cdots dx_k$.

\item[\rm (ii)] Let $A$ and $B$ be two feasible connected graphs on $(k+1)$ vertices and $(l+1)$ vertices with $k \le l$, respectively. Then 
\begin{align*}
	&\lim_{n \to \infty} \frac{\Cov [\xi_n^{(A)}, \xi_n^{(B)} ]}{n} \quad (=: \sigma_{A,B}) \\
	&= \sum_{m=1}^{k+1}\frac{\lambda^{k+l+2-m}}{m! (k+1-m)!(l+1-m)!} \\
	&\quad\times \idotsint_{(\R^d)^{k+l+1-m}} \Prob(\Gamma(\zero, x_{[k]}) \simeq A, \Gamma(\zero, x_{[m-1]}, y_{[l +1 - m ]} ) \simeq B) \\
	&\quad \quad \times dx_{[k]} dy_{[l+1-m]},
\end{align*}
where the two graphs $\Gamma(\zero, x_{[k]})$ and $\Gamma(\zero, x_{[m-1]}, y_{[l +1 - m ]})$ are coupling as induced subgraphs of $\Gamma(\zero, x_{[k]}, y_{[l + 1 - m]})$. 
\end{itemize}
\end{lemma}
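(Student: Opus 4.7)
The plan is to apply the multivariate Mecke formula to both $\Ex[\xi_n^{(A)}]$ and $\Ex[\xi_n^{(A)}\xi_n^{(B)}]$, use translation invariance to convert each integral into $|W_n|$ times a local integral, and pass to the limit by dominated convergence, with integrability supplied by Lemma~\ref{lem:finite}. Concretely, writing
$$
\xi_n^{(A)} = \sum_{\{x_0,\ldots,x_k\}\subset\cP_n} \I\bigl(\Gamma(x_0,\ldots,x_k)\simeq A\bigr),
$$
Mecke gives $\Ex[\xi_n^{(A)}] = \frac{\lambda^{k+1}}{(k+1)!}\int_{W_n^{k+1}}\psi_A(x_0,\ldots,x_k)\,dx_0\cdots dx_k$; after substituting $y_i=x_i-x_0$ and using translation invariance of $\psi_A$ I would rewrite this as
$$
\Ex[\xi_n^{(A)}] = \frac{\lambda^{k+1}}{(k+1)!}\int_{W_n}\left(\int_{(W_n-x_0)^k}\psi_A(\zero,y_{[k]})\,dy_{[k]}\right)dx_0.
$$
Since $\psi_A(\zero,\cdot)$ is integrable on $(\R^d)^k$ by Lemma~\ref{lem:finite} and the domain $W_n-x_0$ exhausts $\R^d$ for $x_0$ in the bulk of $W_n$ (boundary contributions being of order $n^{(d-1)/d}=o(n)$), dividing by $n$ and invoking dominated convergence yields~(i).

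\textbf{Part (ii).} I would expand
$$
\xi_n^{(A)}\xi_n^{(B)} = \sum_{m=0}^{\min(k+1,l+1)}\sum_{(R,S',T')}\I_A(R\cup S')\,\I_B(R\cup T'),
$$
where for each $m$, $R,S',T'$ range over pairwise disjoint subsets of $\cP_n$ with $|R|=m$, $|S'|=k+1-m$, $|T'|=l+1-m$, corresponding to the splitting $S=R\cup S'$, $T=R\cup T'$ with $|S\cap T|=m$. The multivariate Mecke formula then produces a sum in $m$ of integrals of the joint induced-isomorphism probability over $W_n^{k+l+2-m}$. For $m=0$ the two vertex sets are disjoint, so the random edges are independent, the joint probability factors as $\psi_A\cdot\psi_B$, and the $m=0$ term equals $\Ex[\xi_n^{(A)}]\Ex[\xi_n^{(B)}]$ exactly, hence cancels in the covariance. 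For each $m\ge 1$, I would translate one common vertex to $\zero$ (absorbing a factor $|W_n|=n$), relabel the remaining $m-1$ common vertices as $x_1,\ldots,x_{m-1}$, the remaining $A$-only vertices as $x_m,\ldots,x_k$ and the $B$-only vertices as $y_1,\ldots,y_{l+1-m}$, so that the integrand becomes precisely $\Prob(\Gamma(\zero,x_{[k]})\simeq A,\,\Gamma(\zero,x_{[m-1]},y_{[l+1-m]})\simeq B)$. Dividing by $n$ and passing to the limit by dominated convergence as in~(i) then delivers the stated formula for $\sigma_{A,B}$.

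\textbf{Main obstacle.} The technical heart is verifying, uniformly in $n$, integrability of the joint isomorphism probability in each $m\ge 1$ term, which is what powers the dominated-convergence step. This should follow by extending the spanning-tree argument of Lemma~\ref{lem:finite}: when $m\ge 1$ the union graph $A\cup B$ is connected on $k+l+2-m$ vertices (because $A$ and $B$ are each connected and share at least one vertex), so the joint indicator event is contained in the event that every edge of some spanning tree of $A\cup B$ is present in the random graph, whose probability integrates to at most $m_\varphi^{k+l+1-m}<\infty$ after fixing the common vertex at $\zero$. Once this uniform bound is in place, the remaining boundary terms contribute $o(n)$ exactly as in~(i) and wash out after dividing by $n$.
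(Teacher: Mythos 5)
Your proposal is correct and follows essentially the same route as the paper: multivariate Mecke plus translation invariance for (i), decomposition of $\Ex[\xi_n^{(A)}\xi_n^{(B)}]$ by the number $m$ of shared vertices with exact cancellation of the $m=0$ term against $\Ex[\xi_n^{(A)}]\Ex[\xi_n^{(B)}]$, and a spanning-tree/leaf-removal bound (as in Lemma~\ref{lem:finite}) giving the integrability that justifies the limit. The paper phrases the final limit passage as an $\varepsilon$--$r$ splitting of the outer integral rather than dominated convergence, but this is the same argument.
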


\begin{proof}
(i) 
By the multivariate Mecke equation, we  see that
\begin{align*}
	\Ex[\xi_n^{(A)}] &= \Ex \bigg[\sum_{ x_{[k+1]} \subset\cP_n}  \psi_A(x_1, x_2, \dots, x_{k+1}) \bigg]\\
	&= \frac{\lambda^{k+1}}{(k+1)!} \idotsint_{(W_n)^{k+1}} \psi_A(x_1, x_2, \dots, x_{k+1}) dx_1 dx_2 \cdots dx_{k+1} \\
	&= \frac{\lambda^{k+1}}{(k+1)!} \int_{W_n} dx_{k+1} \idotsint_{(W_n - x_{k+1})^{k}} \psi_A(\zero, x_1, x_2, \dots, x_{k}) dx_1 dx_2 \cdots dx_{k}.
\end{align*}
Since the integral in \eqref{hA} is convergent, it follows that for any $\varepsilon > 0$, there is a radius $r>0$ such that if $B_r(\zero) \subset W$, 
\[
	\bigg| \idotsint_{W^k}  \psi_A(\zero, x_1, x_2, \dots, x_{k}) dx_1 dx_2 \cdots dx_{k} - \frac{k!}{\lambda^k}h_A \bigg| < \varepsilon.
\]
Then by dividing the integral with respect to $x_{k+1}$ into two parts according to $B_r(\zero) \subset W_n - x_{k+1}$ or not, we can deduce the desired result 
\[
	\lim_{n \to \infty } \frac{\Ex[\xi_n^{(A)}] }{n} = \frac{\lambda}{k+1} h_A.
\]

(ii) Let us begin with the following expression for  $\Ex[\xi_n^{(A)}   \xi_n^{(B)} ]$
\begin{align*}
	\Ex[\xi_n^{(A)}   \xi_n^{(B)} ] = \sum_{m = 0}^{k+1} \Ex \bigg[  \sum_{\substack{x_{[k+1]},  y_{[l+1]} \subset \cP_n,\\|x_{[k+1]} \cap  y_{[l+1]}| = m  }}  \Prob(\Gamma(x_{[k+1]}) \simeq A, \Gamma(y_{[l+1]}) \simeq B )\bigg].
\end{align*}
To be more precise, the two random graphs $\Gamma(x_{[k+1]})$ and $\Gamma(y_{[l+1]})$ are coupling as induced subgraphs of a random graph on the set $x_{[k+1]} \cup y_{[l+1]}$.
Note that the term with $m=0$ coincides with $\Ex[\xi_n^{(A)}]\Ex[ \xi_n^{(B)} ] $ (by using the multivariate Mecke equation and the fact that the two random graphs are independent). Thus, the covariance $\Cov [\xi_n^{(A)}, \xi_n^{(B)} ]$ is given by 
\[
	\Cov [\xi_n^{(A)}, \xi_n^{(B)} ] =  \sum_{m = 1}^{k+1} \Ex \bigg[  \sum_{\substack{x_{[k+1]},  y_{[k+1]} \subset \cP_n.\\|x_{[k+1]} \cap  y_{[k+1]}| = m  }}  \Prob(\Gamma(x_{[k+1]}) \simeq A, \Gamma(y_{[k+1]}) \simeq B )\bigg].
\]
 For $m \ge 1$, to choose the sets $x_{[k+1]}$ and $y_{[l+1]}$ with $m$ points in common, we first select $m$ common points, and then select the remaining points of $x$'s and $y$'s. Again, using the multivariate Mecke equation, the corresponding term can be expressed further as 
\begin{align*}
	&\frac{\lambda^{k+l+2-m}}{m! (k+1-m)!(l+1-m)!} \\
	&\quad\times \idotsint_{(W_n)^{k+l+2-m}} \Prob(\Gamma(x_{[k+1]}) \simeq A, \Gamma(x_{[m]}, y_{[l +1 - m ]} ) \simeq B) dx_{[k+1]} dy_{[l+1-m]}.
\end{align*}
Then similar to the proof of (i), we can show that the above integral, divided by $n$ (the volume of $W_n$), converges to 
\begin{align*}
	\idotsint_{(\R^d)^{k+l+1-m}} \Prob(\Gamma(\zero, x_{[k]}) \simeq A, \Gamma(\zero, x_{[m-1]}, y_{[l +1 - m ]} ) \simeq B) dx_{[k]} dy_{[l+1-m]},
\end{align*}
where the integral is finite. The proof is complete.
\end{proof}

\begin{theorem}\label{thm:CLT-subgraph}
	Let $\{A_1, \dots, A_m\}$ be feasible connected graphs. Then
	\[
		\left( \frac{ \xi_n^{(A_i)}  - \Ex[\xi_n^{(A_i)}] }{\sqrt n} \right)_{i = 1}^m  \dto \Normal(0, \Sigma), \quad \Sigma = (\sigma_{A_i, A_j})_{i,j=1}^m. 
	\]
Here $\sigma_{A_i, A_i} >0$.
\end{theorem}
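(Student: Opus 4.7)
The plan is to apply Theorem \ref{thm:homo-mark}(ii) to the subgraph-count functionals $f_A(G) :=$ (number of induced subgraphs of $G$ isomorphic to $A$) and then to read off the asymptotic covariance from Lemma \ref{lem:covariance}(ii). The proof thus reduces to verifying weak stabilization and the $p$-moment condition for each $f_{A_i}$.

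\textbf{Weak stabilization.} A direct expansion gives
\[
D_\zero f_A(W) \;=\; \sum_{x_{[k]} \subset \cP|_W} \I(\Gamma(\zero, x_{[k]}) \simeq A),
\]
where $\Gamma(\zero, x_{[k]})$ denotes the induced subgraph on $\{\zero\} \cup x_{[k]}$. Along any increasing sequence of cubes $W_n \nearrow \R^d$ this quantity is monotone non-decreasing in $n$, hence converges almost surely (and therefore in probability) to
\[
\Delta^{(A)} \;:=\; \sum_{x_{[k]} \subset \cP} \I(\Gamma(\zero, x_{[k]}) \simeq A),
\]
which is almost surely finite since $\Ex[\Delta^{(A)}] = h_A < \infty$ by Lemma \ref{lem:finite}. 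By the equivalent condition for weak stabilization on $G(\cP)$ stated just before Theorem \ref{thm:homo-mark} (convergence along increasing sequences suffices), this proves weak stabilization of $f_A$.

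\textbf{Moment condition.} Since $D_\zero f_A(W) \le \Delta^{(A)}$ for every cube $W \ni \zero$, it suffices to show $\Ex[(\Delta^{(A)})^p] < \infty$ for every integer $p \ge 1$. Expanding the $p$-th power and applying the multivariate Mecke equation, one partitions the resulting sum according to the number $j$ of distinct points appearing (with $1 \le j \le kp$) and writes $\Ex[(\Delta^{(A)})^p]$ as a finite combinatorial sum of integrals of the form
\[
\lambda^j \idotsint_{(\R^d)^j} \Prob\bigl(\{\zero\} \cup S_r \text{ induces } A \text{ for } r = 1, \dots, p\bigr)\, dx_{[j]},
\]
where the $S_r$ are prescribed $k$-subsets of $x_{[j]}$ whose union is $x_{[j]}$. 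Because each $\{\zero\} \cup S_r$ is connected and they all share $\zero$, the edges used in these events contain a spanning tree $T$ on $\{\zero\} \cup x_{[j]}$, so the joint probability is bounded by $\prod_{e \in T} \varphi(e)$. Integrating out the leaves of $T$ one at a time (each contributing a factor $m_\varphi$) yields the bound $m_\varphi^j$ for each integral, so the whole sum is finite. This is the main technical step, essentially a standard spanning-tree estimate but one that must be bookkept carefully against the Mecke expansion.

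\textbf{Conclusion and positivity.} Theorem \ref{thm:homo-mark}(ii) now gives
\[
\left( \frac{\xi_n^{(A_i)} - \Ex[\xi_n^{(A_i)}]}{\sqrt{n}} \right)_{i=1}^m \dto \Normal(0, \Sigma),
\]
with $\sigma_{A_i, A_j} = \lim_n \Cov(\xi_n^{(A_i)}, \xi_n^{(A_j)})/n$, a quantity already identified explicitly in Lemma \ref{lem:covariance}(ii). For the positivity of the diagonal entries, the limit add-one cost is the non-negative integer-valued random variable $\Delta^{(A_i)}$; since $A_i$ is feasible, $\Ex[\Delta^{(A_i)}] = h_{A_i} > 0$, hence $\Prob(\Delta^{(A_i)} \neq 0) > 0$, and the positivity clause of Theorem \ref{thm:homo-mark}(i) yields $\sigma_{A_i, A_i} > 0$.
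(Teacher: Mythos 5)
Your proposal is correct and follows essentially the same route as the paper: identify the add-one cost as the count of new induced copies of $A$ through $\zero$, use monotonicity along increasing cubes to get the weak stabilization with limit $\Delta^{(A)}$, bound the moments of $\Delta^{(A)}$ via the Mecke expansion and a spanning-tree/leaf-removal estimate as in Lemma \ref{lem:finite}, and conclude positivity from feasibility. The only cosmetic difference is that you claim all moments of $\Delta^{(A)}$ are finite, whereas the paper verifies only the third moment, which suffices.
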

\begin{remark}
Lemma~\ref{lem:covariance} implies the following weak law of large numbers  
	\[
		\frac{\xi_n^{(A)}}{n} \to \frac{\lambda}{k+1}h_A \quad \text{in probability as }  n \to \infty.
	\]
\end{remark}
\begin{proof}
Let $A$ be a feasible connected graph and let $f$ be the functional counting the number of induced subgraphs isomorphic to $A$. 
By Theorem~\ref{thm:homo-mark}, it suffices to show the weak stabilization property and the moment condition for $f$.

By definition, $D_\zero f(W)$ is the number of induced subgraphs in $G(\cP \cup \{\zero\})|_W$ containing the vertex $\zero$ isomorphic to $A$,
\[
	D_\zero f(W) =  \sum_{x_{[k]} \subset \cP|_W }\I(\Gamma(\zero,x_{[k]}) \simeq A).
\]
Here recall that $\I$ denotes the indicator function. Thus, the functional $f$ is weakly stabilizing because almost surely,
\begin{equation} \label{dofna}
D_\zero f ({W_n}) \to  \Delta:=   \sum_{x_{[k]} \subset \cP} \I(\Gamma(\zero,x_{[k]}) \simeq A).
\end{equation}
Moreover, since $A$ is feasible, Lemma~\ref{lem:finite} implies that the limit $\Delta$ is finite (almost surely) and non-trivial.

For the moment condition, observe that
\begin{equation}\label{3-moment}
\sup_{\zero \in W\text{:cube}}\Ex[|D_\zero f (W)|^3] \leq \Ex[\Delta^3].
\end{equation}
Thus, our remaining task is to show that $\Ex[\Delta^3]$ is finite. Similar to the proof of Lemma~\ref{lem:covariance} (see also \cite[Lemma~3.4]{Grygierek-2019}), we see that  there exist constants $\{C(k,r,s,t): 0\leq r, s, t \leq k \}$, such that
\begin{align}
&& \Ex[\Delta^3]  \\
&=& \sum_{r=0}^{k-1} \sum_{s=0}^{k-1} \sum_{t=0}^{\min\{k-r,k-s\}}  C(k,r,s,t) \int_{(\R^d)^{\ell}} p_A(y_{[k]},z_{[k-r]},w_{[k-u]}) dy_{[k]} dz_{[k-r]} dw_{[k-u]}, \notag
\end{align}
where $\ell=3k-r-u$, $u=s+t$ and $p_A(y_{[k]},z_{[k-r]},w_{[k-u]})$ is the probability that the following three events happen 
\[
\Gamma (\zero, y_{[k]}) \simeq A,\quad  \Gamma(\zero,y_{[r]}, z_{[k-r]}) \simeq A, \quad \Gamma(\zero,y_{[s]},z_{[t]},w_{[k-u]}) \simeq A.
\]
In addition, each integral in the above expression is finite, which can be proved  in the same way as in Lemma~\ref{lem:finite}. Therefore $\Ex[\Delta^3]< \infty$. The proof is complete.
\end{proof}

The following result on component counts was shown in \cite{Last-Nestmann-Schulte-2018} by a different approach for which the rate of convergence in the CLT was also known. The multidimensional CLT itself can be easily derived from Theorem~\ref{thm:homo-mark} here.
\begin{theorem}[\cite{Last-Nestmann-Schulte-2018}]\label{thm:component}
\begin{itemize}
	\item[\rm(i)]
	Let $A$ be a feasible graph on $(k+1)$ vertices. Let $\zeta_n^{(A)}$ be the number of components in $G(\cP_n)$ isomorphic to $A$. Then as $n \to \infty$, 
	\begin{align*}
		\frac{\zeta_n}{n} \to &\frac{\lambda^{k+1}}{(k+1)!} \idotsint_{(\R^d)^k} \Prob( \Gamma(x_0 = \zero, x_1, \dots, x_k) \simeq A) \\
		&\quad \times \exp\left( \int_{\R^d} \left[ \prod_{i = 0}^k (1 - \varphi(y - x_i))  - 1 \right] dy\right) dx_1 \cdots dx_{k} >0,
	\end{align*}
in probability.

\item[\rm (ii)] Let $\{A_1, \dots, A_m\}$ be feasible connected graphs. Then
	\[
		\left( \frac{ \zeta_n^{(A_i)}  - \Ex[\zeta_n^{(A_i)}] }{\sqrt n} \right)_{i = 1}^m  \dto \Normal(0, \Sigma),
	\]
with explicit formula for $\Sigma$.

\end{itemize}

\end{theorem}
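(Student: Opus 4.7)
The plan is to derive both parts from the general CLT machinery already established: part (ii) follows directly from Theorem~\ref{thm:homo-mark}(ii) once the component-count functionals are shown to be weakly stabilizing and to satisfy the $p$-moment condition~\eqref{p-moment}, while part (i) is a first-moment computation via the multivariate Mecke formula combined with a Poincar\'e-type variance bound.

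For part (ii), let $f_A$ denote the functional counting components isomorphic to $A$. The add-one cost $D_\zero f_A(W)$ has a clean description: adding $\zero$ to $G(\cP)|_W$ merges all distinct components of $G(\cP)|_W$ that contain some neighbor of $\zero$ into a single new component, so $D_\zero f_A(W)$ equals the indicator that this newly formed component is isomorphic to $A$, minus the number of merged components already isomorphic to $A$. Let $N$ denote the number of $\cP$-points actually connected to $\zero$ in $G(\cP \cup \{\zero\})$; by standard Poisson thinning $N$ is Poisson with mean $\lambda \int_{\R^d}\varphi(x)dx < \infty$ and has finite moments of all orders. The deterministic bound $|D_\zero f_A(W)| \le N + 1$, uniform in $W$, gives the moment condition for every $p > 2$. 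For weak stabilization, let $x_1, \dots, x_N$ denote the neighbors of $\zero$ and let $C_i$ be the component of $x_i$ in the infinite graph $G(\cP)$. Along any increasing sequence of cubes $\{W_n\}$ containing $\zero$, the component of $x_i$ in $G(\cP)|_{W_n}$ is non-decreasing in $n$; if $|C_i| < \infty$ it eventually coincides with $C_i$, and if $|C_i| = \infty$ it eventually contains more than $k+1$ vertices and hence cannot be isomorphic to $A$. The same dichotomy applies to the merged component containing $\zero$, so $D_\zero f_A(W_n)$ stabilizes almost surely to a limit $\Delta^{(A)}$; the equivalence stated after Theorem~\ref{thm:homo-mark} (reducing an arbitrary sequence of cubes to increasing ones) then extends this to arbitrary sequences.

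For part (i), the multivariate Mecke equation together with the independence of edge marks from the Poisson points yields
\[
\Ex[\zeta_n^{(A)}] = \frac{\lambda^{k+1}}{(k+1)!} \int_{W_n^{k+1}} \Prob(\Gamma(x_0,\dots,x_k) \simeq A)\, \exp\!\Bigl( -\lambda \int_{W_n} \bigl[1 - \prod_{i=0}^k (1 - \varphi(y - x_i))\bigr] dy \Bigr)\, dx_0 \cdots dx_k,
\]
where the exponential is the void probability that no other point of $\cP_n$ connects to any $x_i$. Fixing $x_0 = \zero$ by translation invariance and invoking the integrability argument from Lemma~\ref{lem:finite} to pass to the limit in the inner integrals, one obtains the asserted limit for $\Ex[\zeta_n^{(A)}]/n$, which is strictly positive by feasibility of $A$. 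The Poincar\'e inequality applied to $f_A$ gives $\Var[\zeta_n^{(A)}] \le \lambda n \cdot \Ex[(N+1)^2] = O(n)$, and Chebyshev's inequality then yields the convergence in probability. The multidimensional Gaussian limit in (ii) is finally obtained by applying Theorem~\ref{thm:homo-mark}(ii) to the tuple $(f_{A_1}, \dots, f_{A_m})$, with $\sigma_{ij}$ computable by a Mecke argument patterned on Lemma~\ref{lem:covariance}(ii) but with each probability inside the integral multiplied by the appropriate void-probability factor enforcing the component (rather than induced-subgraph) condition.

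I expect the main obstacle to be the rigorous verification of weak stabilization in the possibly supercritical regime. The conceptual picture above is clean, but a careful proof must handle the case in which several neighbors of $\zero$ lie in the same (possibly infinite) cluster, so that the merging step is not over-counted, and it must exploit the monotonicity of the component sizes along increasing cubes (which is why the reduction to increasing sequences, via the RCM analog of Proposition~\ref{prop:increasing-sequence}, is essential since component sizes need not vary monotonically along arbitrary cube sequences). Once this step is established, the moment bound is immediate from the Poisson tail of $N$, and the rest of the proof is a direct invocation of the general results developed in Section~2.
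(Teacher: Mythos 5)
Your proposal is sound, and it matches the route the paper itself endorses: the paper does not prove this theorem (it is quoted from \cite{Last-Nestmann-Schulte-2018}, whose own proof is a Malliavin--Stein-type normal approximation yielding rates of convergence), but it explicitly remarks that the multidimensional CLT ``can be easily derived from Theorem~\ref{thm:homo-mark} here,'' and your argument supplies exactly the missing details. The key points are all in order: the add-one cost of the component count is the indicator that the merged component containing $\zero$ is isomorphic to $A$ minus the number of absorbed components isomorphic to $A$, hence is bounded by $N+1$ with $N$ the Poisson number of neighbours of $\zero$ in the full graph, giving the moment condition for every $p$; the almost-sure stabilization along increasing cubes (finite clusters freeze, infinite clusters permanently exceed $k+1$ vertices) combined with the reduction to increasing sequences via Proposition~\ref{prop:D-tilde} gives weak stabilization; and the Mecke-plus-void-probability computation together with the Poincar\'e bound $\Var[\zeta_n^{(A)}]=O(n)$ gives part (i). Two small remarks. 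First, your first-moment formula carries the factor $\lambda$ inside the exponential, i.e.\ $\exp\bigl(\lambda\int_{\R^d}[\prod_i(1-\varphi(y-x_i))-1]\,dy\bigr)$, whereas the displayed limit in the theorem omits it; yours is the correct probability generating functional of the Poisson process, so the discrepancy is a typo in the statement rather than an error on your side. Second, the ``explicit formula for $\Sigma$'' is only gestured at: Corollary~\ref{cor:multi} guarantees the limits $\sigma_{ij}$ exist, but actually computing them requires a second-moment Mecke calculation in which the two void-probability factors interact over the union of the two vertex sets, which is noticeably messier than Lemma~\ref{lem:covariance}(ii); since the theorem statement does not display the formula either, this is an acceptable omission, but it is the one place where ``patterned on'' hides real work.
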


\subsection{Betti numbers}
For a bounded subset $W \subset \R^d$, denote by  $\cX_W$ the clique complex of the graph $G(\cP)|_W$, that is, the abstract simplicial complex formed by the cliques (or complete subgraphs) of $G(\cP)|_W$. (A simple example of the clique complex of a graph is given in Figure~\ref{f2}.)
Let $\beta_k(W)$, or $\beta_k(\cX_W)$ be the $k$th Betti number of the simplicial complex $\cX_W$. We are going to establish a LLN and a CLT for $\beta_k(W)$ as $W \to \R^d$.

\begin{figure}
\centering
\begin{subfigure}[b]{0.3\textwidth}
\fbox{\includegraphics[width=\textwidth]{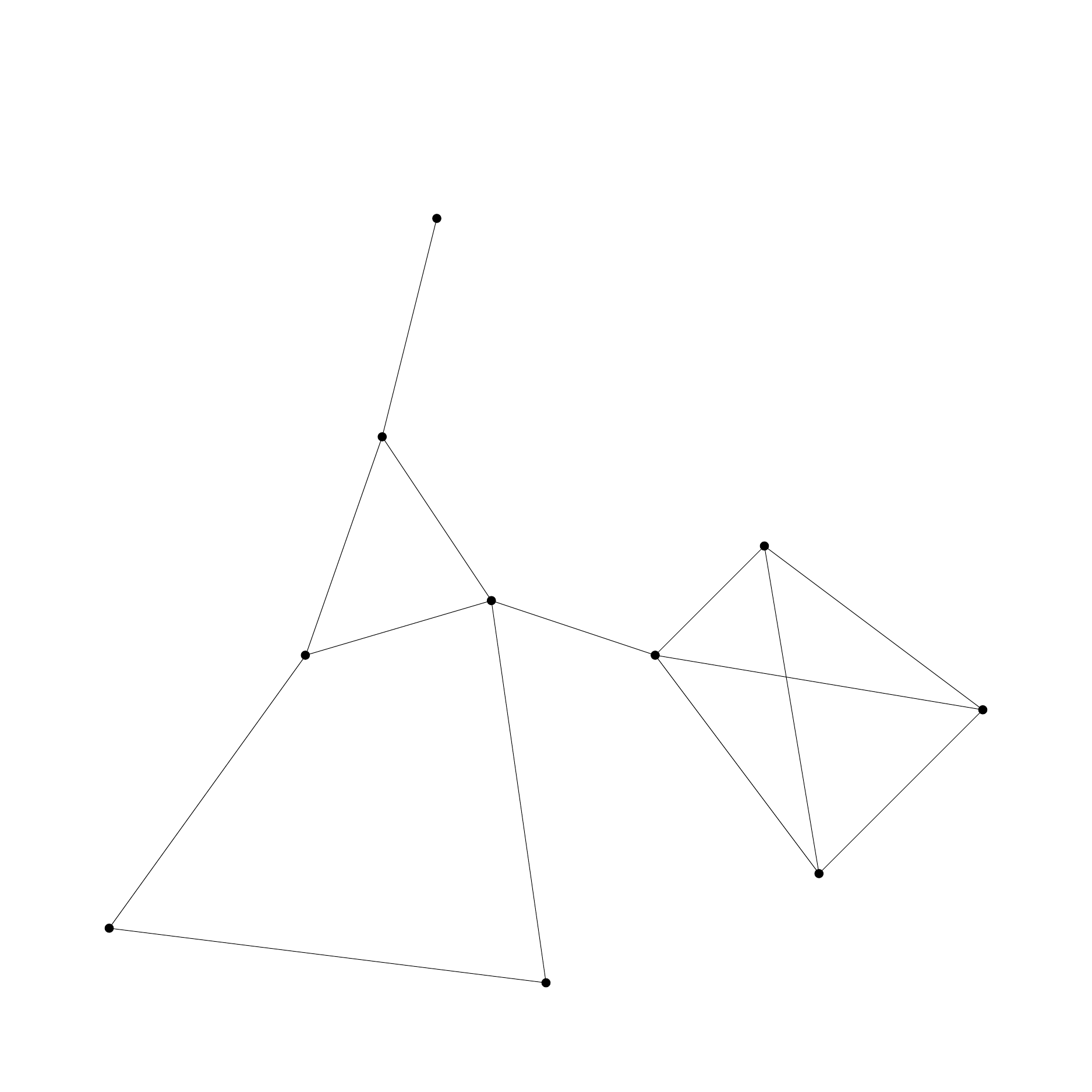}}
\caption{A graph}
\end{subfigure}
\quad
\begin{subfigure}[b]{0.3\textwidth}
\fbox{\includegraphics[width=\textwidth]{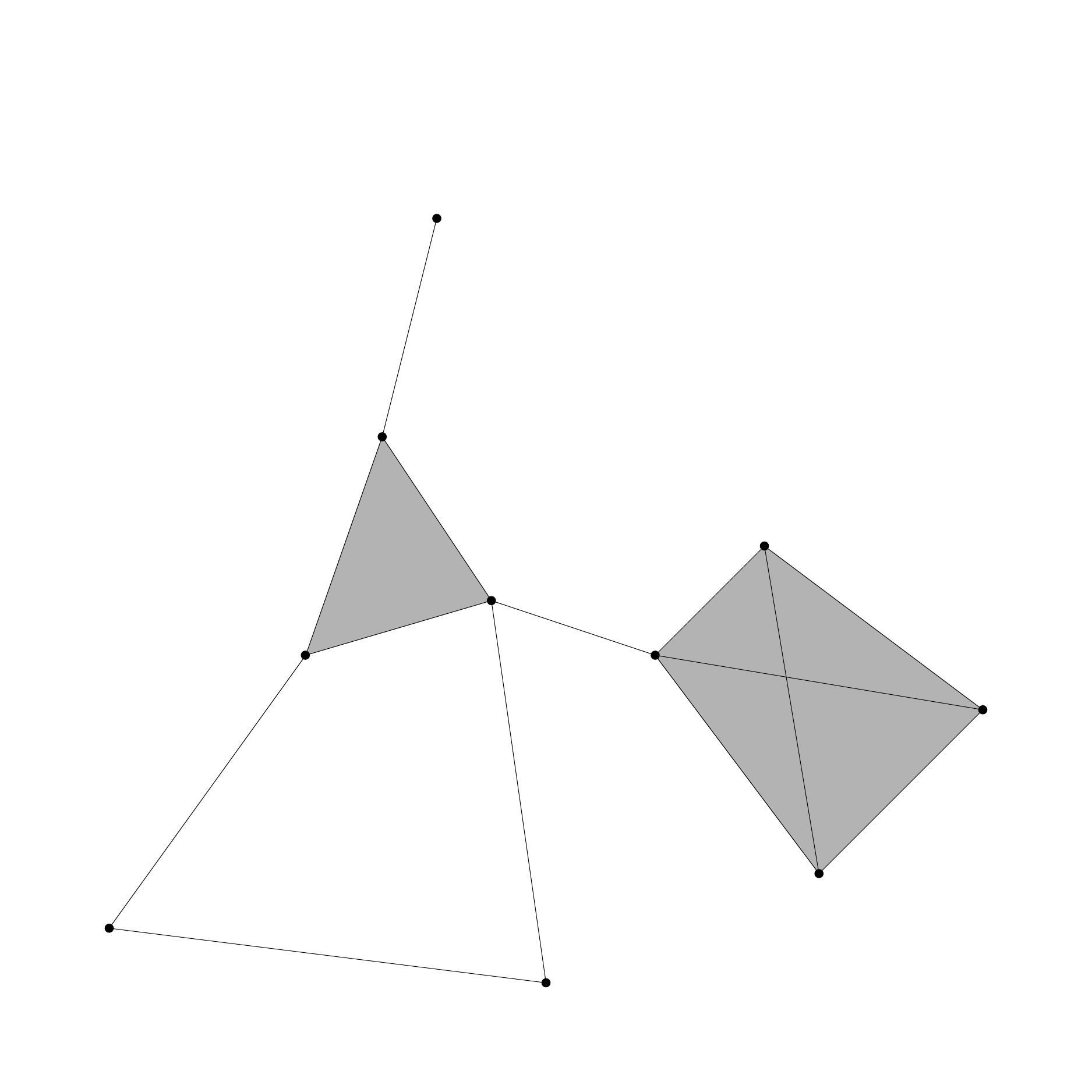}}
\caption{An intermediate}
\end{subfigure}
\quad
\begin{subfigure}[b]{0.3\textwidth}
\fbox{\includegraphics[width=\textwidth]{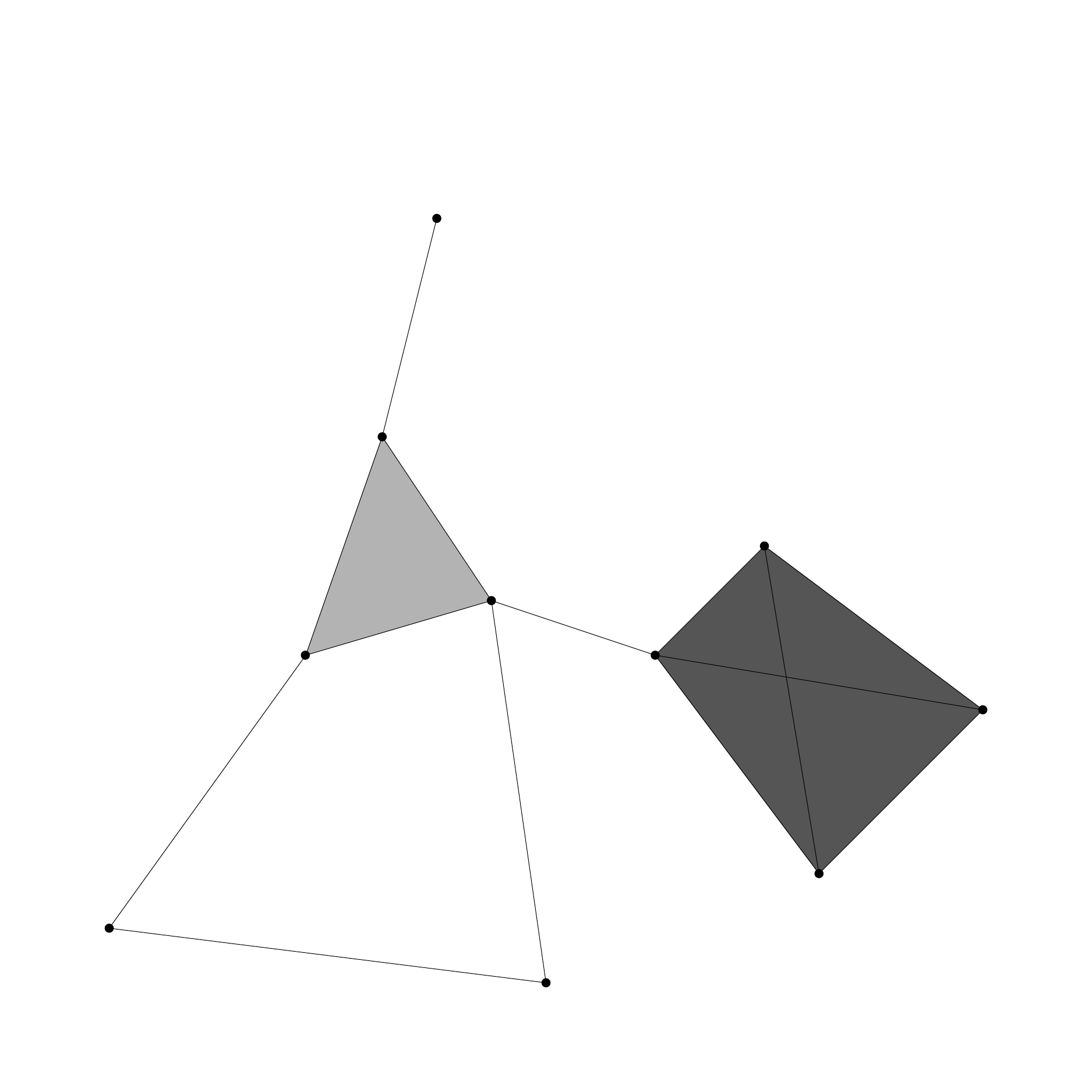}}
\caption{The clique complex}
\end{subfigure}
\caption{The clique complex of a graph.}
\label{f2}
\end{figure}

Let us give a quick review on Betti numbers and some necessary properties needed in the arguments. We refer the readers to the book \cite{Munkres-1984} for more details.

Let $\cK$ be an abstract simplicial complex, that is, a collection of nonempty subsets of a finite set $V$ closed under inclusion relation. An element $\sigma \in \cK$ is called a simplex and more precisely, a $k$-simplex, if $|\sigma| = k+1$.
For each $k$, denote by $\cK_k$ the set of all $k$-simplices in $\cK$, and let 
\[
	C_k(\cK) = \bigg\{\sum \alpha_i \lr{\sigma_i} : \alpha_i \in \Fi, \sigma_i \in \cK_k \bigg \}
\]
be a vector space on some fixed field $\Fi$, where $\lr{\sigma}$ denotes the oriented simplex. For $k \ge 1$, the boundary operator $\partial_k \colon C_k(\cK) \to C_{k - 1} (\cK)$ is defined as a linear mapping with
\[
	\partial_k(\lr{v_0, \dots, v_k}) = \sum_{i = 0}^k (-1)^i \lr{v_0, \dots, \hat{v}_i, \dots, v_k},
\]
on any oriented $k$-simplex $\lr{v_0, \dots, v_k}$. Here the symbol ${\hat{~}}$ over $v_i$ indicates that the vertex $v_i$ is removed from the sequence. (The operator $\partial_0 \colon C_0(\cK) \to \{0\}$ is defined to be a trivial one.) We can easily check that $\partial_k \circ \partial_{k+1} = 0$, and thus $B_k(\cK) := \Image \partial_{k + 1}  \subset Z_k (\cK) := \ker \partial_{k}$. The two are called the $k$th boundary group and the $k$th cycle group, respectively. The quotient space 
\[
	H_k (\cK) = Z_k(\cK) / B_k(\cK)
\]
is called the $k$th homology group of $\cK$, and its rank is the $k$th Betti number, 
\[
	\beta_k(\cK) = \rank H_k(\cK) = \dim Z_k(\cK) - \dim B_k(\cK).
\]
Note that the zeroth Betti number coincides with the number of connected components in the undirected graph $G= (V, E)$, where $E =\cK_1$.

Let $\{\cK^{(i)}\}_{i\in I}$ be a finite collection of disjoint simplicial complexes. Then the disjoint union $\sqcup_{i \in I}\cK^{(i)} $ is again  a simplicial complex, and the following identity holds
\begin{equation}\label{additive}
	\beta_k \Big(\bigsqcup_{i \in I}\cK^{(i)} \Big) = \sum_{i \in I} \beta_k(\cK^{(i)}).
\end{equation}
This property follows directly from the definition. Another useful property is the following. For two finite simplicial complexes $\cK \subset \tilde \cK$, and any $k \ge 0$, 
	\begin{equation}\label{Betti-estimate}
		| \beta_k(\cK) - \beta_k (\tilde\cK) | \le  \sum_{j = k}^{k + 1} (S_j(\tilde \cK) - S_j(\cK)),
	\end{equation}
where $S_j(\cK)$ (resp.~$S_j(\tilde \cK)$) denotes the number of $j$-simplices in $\cK$ (resp.~$\tilde \cK$).
The proof of this inequality can be found in \cite{Trinh-2017, ysa}.

The following LLN for Betti numbers is analogous to a LLN for Betti numbers in the thermodynamic regime \cite{Goel-2019, ysa}.

\begin{theorem}
	As the sequence of cubes $\{W_n\}$ tends to $\R^d$, 
	\[
		\frac{\beta_k(W_n)}{|W_n|} \to \bar \beta_k \quad \text{in probability,}
	\]
where $\bar \beta_k$ is a constant. The limit $\bar \beta_k$ is positive, if $\varphi \in (0,1)$.  
\end{theorem}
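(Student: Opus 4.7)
The plan is to verify the hypotheses of Theorem~\ref{thm:homo-mark}(i) for $f = \beta_k$, apply it to obtain $\Var[\beta_k(W_n)] = O(|W_n|)$, and prove convergence of the mean $\Ex[\beta_k(W_n)]/|W_n|$ separately by a cube-tiling argument.

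\textbf{Weak stabilization.} By the equivalence recorded after Theorem~\ref{thm:homo-mark}, it suffices to show that $D_\zero \beta_k(W_n)$ converges in probability along any increasing sequence of cubes $W_n \nearrow \R^d$. Since $\int_{\R^d}\varphi\,dx < \infty$, the Mecke formula gives $\Ex[\deg(\zero)] < \infty$, so $\zero$ has only finitely many neighbors in $G(\cP \cup \{\zero\})$ a.s., and the link $L$ of $\zero$ in the infinite clique complex is a.s.\ a finite subcomplex. For $n$ large enough that all vertices of $L$ lie in $W_n$, we have the cone decomposition
\[
\cX_{W_n \cup \{\zero\}} = \cX_{W_n} \cup C_\zero(L), \quad \cX_{W_n} \cap C_\zero(L) = L,
\]
with $C_\zero(L)$ contractible. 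Mayer--Vietoris applied in reduced homology then yields
\[
D_\zero \beta_k(W_n) = \dim\ker\bigl(\iota_*^{k-1}\bigr) - \rank\bigl(\iota_*^k\bigr),
\]
where $\iota_*^j \colon \tilde H_j(L) \to \tilde H_j(\cX_{W_n})$ is the inclusion-induced map. Enlarging $\cX_{W_n}$ can only turn cycles of $L$ into boundaries, so $\rank(\iota_*^j)$ is non-increasing and $\dim\ker(\iota_*^j)$ non-decreasing in $n$; both are integers in $[0,\beta_j(L)]$, hence they stabilize, and so does $D_\zero \beta_k(W_n)$.

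\textbf{Moment condition and CLT.} By the Betti estimate \eqref{Betti-estimate} applied to the pair $\cX_{W} \subset \cX_{W\cup\{\zero\}}$,
\[
|D_\zero \beta_k(W)| \le J_k^\zero(W) + J_{k+1}^\zero(W) \le J_k^\zero(\R^d) + J_{k+1}^\zero(\R^d),
\]
where $J_j^\zero(W)$ counts the $j$-simplices of $\cX_{W\cup\{\zero\}}$ containing $\zero$. By the multivariate Mecke formula together with the spanning-tree reduction of Lemma~\ref{lem:finite} (iterated to handle $p$-th moments of a clique count), $\Ex[(J_j^\zero(\R^d))^p] < \infty$ for every $p \ge 1$, since $\varphi \le 1$ gives $\int \varphi^r\,dx \le m_\varphi$ for all $r \ge 1$. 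The moment condition \eqref{p-moment} follows, and Theorem~\ref{thm:homo-mark}(i) now yields $\Var[\beta_k(W_n)] = O(|W_n|)$. Chebyshev then gives $\beta_k(W_n)/|W_n| - \Ex[\beta_k(W_n)]/|W_n| \to 0$ in probability.

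\textbf{Convergence of the mean and positivity.} For integer $L$ with $L^{1/d}\in \N$, tile $W_n$ by lattice cubes of side $L^{1/d}$ and let $\{C_i\}_{i=1}^{m_n}$ be those contained in $W_n$. The disjoint-union complex $\hat\cK_n := \bigsqcup_i \cX_{C_i}$ is a subcomplex of $\cX_{W_n}$, so additivity \eqref{additive} and the Betti estimate give
\[
\bigl|\beta_k(\cX_{W_n}) - \textstyle\sum_i \beta_k(\cX_{C_i})\bigr| \le R_n(L),
\]
where $R_n(L)$ counts the ``crossing'' $k$- and $(k+1)$-simplices (those with vertices in distinct $C_i$ or in $W_n \setminus \bigsqcup_i C_i$). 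A direct Mecke computation gives $\limsup_n \Ex[R_n(L)]/|W_n| \le \varepsilon(L)$ with $\varepsilon(L) \to 0$ as $L \to \infty$. Coupling with the elementary law of large numbers for the i.i.d.\ sum $\sum_i \beta_k(\cX_{C_i})$ and letting $L \to \infty$ after $n \to \infty$ yields $\Ex[\beta_k(W_n)]/|W_n| \to \bar\beta_k := \lim_{L\to\infty} L^{-1}\Ex[\beta_k(\cX_{C_L})]$. For positivity when $\varphi \in (0,1)$, fix a finite clique complex with $\beta_k \ge 1$ (e.g.\ the boundary of a cross-polytope on $2k+2$ vertices); the probability of realizing such a configuration isolated from the rest of $\cP$ inside a given unit cube is positive, so by \eqref{additive} and the LLN for isomorphic component counts (Theorem~\ref{thm:component}(i)), $\bar\beta_k > 0$. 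The most delicate step is weak stabilization, where one has to argue that $\iota_*^j$ eventually stabilizes despite $\cX_{W_n}$ growing unboundedly; monotonicity of rank/kernel, integrality, and the finiteness of $\beta_j(L)$ are what make the argument go through.
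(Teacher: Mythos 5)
Your proposal is correct, and its overall architecture coincides with the paper's: convergence of the mean via tiling by lattice cubes, the estimate \eqref{Betti-estimate} to control the crossing simplices, concentration deduced from the $O(|W_n|)$ variance bound supplied by the CLT, and positivity via components isomorphic to the cross-polytope boundary $O_k$ combined with Theorem~\ref{thm:component}(i). The one place where you take a genuinely different route is the weak stabilization of $\beta_k$. The paper proves an elementary linear-algebra lemma: for an increasing sequence $\cK^{(n)}$ and a fixed finite set $K_0$ of added simplices, the difference $\beta_k(\cK^{(n)}\sqcup K_0)-\beta_k(\cK^{(n)})$ is monotone increasing (by comparing cycle-space dimensions inside the ambient chain complex) and bounded by \eqref{Betti-estimate}, hence convergent. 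You instead write $\cX_{W_n\cup\{\zero\}}$ as the union of $\cX_{W_n}$ with the cone over the (a.s.\ finite) link $L$ of $\zero$, apply Mayer--Vietoris to get $D_\zero\beta_k(W_n)=\dim\ker(\iota_*^{k-1})-\rank(\iota_*^k)$, and conclude by monotonicity and integrality of these two quantities. Both arguments are valid over a field; yours is arguably more structural and makes the eventual constancy (not just convergence) of the add-one cost transparent, at the cost of invoking Mayer--Vietoris and requiring the small edge case $L=\varnothing$ (isolated origin) to be handled separately, while the paper's lemma is self-contained and applies verbatim to the abstract setting of an increasing filtration with a fixed added piece. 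The remaining ingredients (moment bound via simplex counts containing $\zero$ and the Mecke/spanning-tree estimate, positivity via $\beta_k(W_n)\ge\zeta_n^{(O_k)}$) match the paper's.
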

\begin{proof}
	We will only show the convergence of the mean, because the convergence in probability is a consequence of the CLT below. It suffices to consider the sequence of cubes $W_n = [-n^{1/d}/2, n^{1/d}/2)^d$ as $n \to \infty$. For $L > 0$, divide the cube $W_n$ according to the lattice $L^{1/d} \Z^d$ and let $\{C_i\}_{i = 1}^{\ell_n}$ be the lattice cubes entirely contained in $W_n$. It is clear that $\ell_n /n \to 1/L$ as $n \to \infty$. Let 
	\[
		\cK = \bigsqcup_{i = 1}^{\ell_n} \cX_{C_i}
	\]
be the disjoint union of $\{\cX_{C_i}\}$ which is a subcomplex of $\cX_{W_n}$. It follows from the estimate~\eqref{Betti-estimate} that
\[
	|\beta_k(\cX_{W_n}) - \beta_k(\cK) | \le \sum_{j = k}^{k+1}\left( S_j(\cX_{W_n}) - S_j(\cK) \right).
\]
Then by taking the expectation, we obtain that 
\[
	\left| \frac{\Ex[\beta_k(\cX_{W_n})]} {n} - \frac1n \sum_{i = 1}^{\ell_n} \Ex[\beta_k(\cX_{C_i})]\right| \le  \sum_{j = k}^{k+1}\left( \frac{\Ex[S_j(\cX_{W_n})]}{n} - \frac{1}{n} \sum_{i = 1}^{\ell_n} \Ex[S_j(\cX_{C_i})] \right).
\]
Here we have used the fact that $\cK$ is the disjoint union of $\{\cX_{C_i}\}$. In addition, note that all $\cX_{C_i}$ have the same distribution. Therefore 
\[
	\left| \frac{\Ex[\beta_k(\cX_{W_n})]} {n} - \frac {\ell_n} n  \Ex[\beta_k(\cX_{C_1})]\right| \le  \sum_{j = k}^{k+1}\left( \frac{\Ex[S_j(\cX_{W_n})]}{n} - \frac{\ell_n}{n}  \Ex[S_j(\cX_{C_1})] \right).
\]
By letting $n\to \infty$, it follows that 
\begin{align*}
	&\limsup_{n \to \infty} \frac{\Ex[\beta_k(\cX_{W_n})]} {n}  \le \frac{\Ex[\beta_k(\cX_{C_1})]}{L} +\sum_{j = k}^{k+1} \left(\lim_{n \to \infty} \frac{\Ex[S_j(\cX_{W_n})]}{n}  - \frac{ \Ex[S_j(\cX_{C_1})] }{L} \right),\\
	&\liminf_{n \to \infty} \frac{\Ex[\beta_k(\cX_{W_n})]} {n}  \ge \frac{\Ex[\beta_k(\cX_{C_1})]}{L} -\sum_{j = k}^{k+1} \left(\lim_{n \to \infty} \frac{\Ex[S_j(\cX_{W_n})]}{n}  - \frac{ \Ex[S_j(\cX_{C_1})] }{L} \right),
\end{align*}
and hence 
\begin{align*}
	\limsup_{n \to \infty} \frac{\Ex[\beta_k(\cX_{W_n})]} {n}  - \liminf_{n \to \infty}& \frac{\Ex[\beta_k(\cX_{W_n})]} {n}  \\
	&\le 2 \sum_{j = k}^{k+1} \left(\lim_{n \to \infty} \frac{\Ex[S_j(\cX_{W_n})]}{n}  - \frac{ \Ex[S_j(\cX_{C_1})] }{L} \right).
\end{align*}
Since $S_j$ counts the number of complete subgraphs on $(j+1)$ vertices, Lemma~\ref{lem:covariance}(i) ensures that the limit of ${\Ex[S_j(\cX_{W_n})]}/{n}$ exists. This also implies that the right hand side of the above equation goes to zero as $L \to \infty$. Therefore, the limit $\lim_{n \to \infty}{\Ex[\beta_k(\cX_{W_n})]} /{n}$ exists. We will show the positivity of $\bar \beta_k$ at the end of this section. The proof is complete.
\end{proof}

Next, we establish a CLT for Betti numbers. Related results are CLTs for Betti numbers and persistent Betti numbers in \cite{hst, ysa}, respectively.
\begin{theorem}
As the sequence of cubes $W$'s tends to $\R^d$, 
	\[
		\frac{\beta_k(W) - \Ex[\beta_k(W)]}{\sqrt {|W|}} \dto \Normal(0, \sigma^2_k),
	\]
for a constant $\sigma^2_k \ge 0$. The limiting variance $\sigma^2_k$ is positive, if $\varphi \in (0,1)$.
\end{theorem}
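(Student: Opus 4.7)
The plan is to apply Theorem~\ref{thm:homo-mark}(i) with $f = \beta_k$, which reduces the task to verifying (a) weak stabilization, (b) the moment condition~\eqref{p-moment}, and (c) non-triviality of the limit add-one cost $\Delta$ for the positivity claim.

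The moment condition is quick. Every $j$-simplex of $\cX_{W \cup \{\zero\}}$ missing from $\cX_W$ must contain $\zero$, so~\eqref{Betti-estimate} yields
\[
|D_\zero \beta_k(W)| \le S_k^\zero(W) + S_{k+1}^\zero(W) \le \binom{N}{k} + \binom{N}{k+1},
\]
where $N$ is the number of neighbours of $\zero$ in $G(\cP \cup \{\zero\})$. By Poisson thinning $N \sim \Poisson(\lambda m_\varphi)$ independently of $W$, hence has all moments, and~\eqref{p-moment} holds for every $p > 2$.

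For weak stabilization, by the equivalent condition noted after Theorem~\ref{thm:homo-mark} it suffices to consider an increasing sequence $\{W_n\}$. Let $L := \mathrm{lk}(\zero)$ be the a.s.\ finite clique complex on the neighbour set $N_\zero$ of $\zero$ in $G(\cP \cup \{\zero\})$, and write $\cK_n = \cX_{W_n}$, $\tilde\cK_n = \cX_{W_n \cup \{\zero\}}$; for $n$ with $W_n \supset N_\zero$, $L$ is a fixed finite subcomplex of $\cK_n$. Decomposing $\tilde\cK_n = \cK_n \cup \mathrm{st}(\zero)$ with intersection $L$, contractibility of the star together with the Mayer--Vietoris long exact sequence gives, for $k \ge 1$, the identity
\[
D_\zero \beta_k(W_n) = \dim \ker(\alpha_{k-1,n}) - \rank(\alpha_{k,n}),
\]
where $\alpha_{j,n}\colon H_j(L) \to H_j(\cK_n)$ is induced by inclusion. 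Enlarging $W_n$ only adds potential bounding chains, so $\ker(\alpha_{k-1,n})$ is non-decreasing and $\rank(\alpha_{k,n})$ is non-increasing in $n$; both live in the finite-dimensional space $H_*(L)$ and hence stabilize almost surely. For $k = 0$, one handles the formula directly: $D_\zero \beta_0(W_n) = 1 - |\{\text{components of }\cK_n\text{ meeting }N_\zero\}|$ is non-increasing and bounded below, hence also stabilizes.

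For positivity, I would exhibit a positive-probability event on which $\Delta$ is deterministic and non-zero. For $k=0$, the event $\{\zero \text{ isolated}\}$ has probability $e^{-\lambda m_\varphi} > 0$ and forces $\Delta = 1$. For $k \ge 1$ I would use a cross-polytope configuration: place $2(k+1)$ points in small disjoint balls near $\zero$ at approximate positions $\pm\epsilon e_i$, and condition on each being connected to $\zero$, on all non-antipodal pairs being connected, on antipodal pairs being disconnected, on no other points of $\cP$ in a large ball $B_R(\zero)$, and on no outgoing edges from these $2(k+1)$ points. Mecke's formula together with a Campbell-type computation of the Poisson Laplace functional show this event has positive probability, using $\varphi \in (0,1)$ and $m_\varphi < \infty$; on it, $L$ is the boundary complex of the $(k+1)$-cross-polytope (a simplicial $S^k$), the cluster containing $\zero$ is isolated, and the Mayer--Vietoris formula gives $\rank(\alpha_{k,n}) = 1$, $\ker(\alpha_{k-1,n}) = 0$, i.e.\ $\Delta = -1$. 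The main obstacle I anticipate is the Mayer--Vietoris bookkeeping, in particular reconciling the small-$k$ boundary cases with the general formula by tracking $H_0$ of the contractible star; once the monotone stabilization picture is in place, Theorem~\ref{thm:homo-mark}(i) delivers both the CLT and the positivity of $\sigma_k^2$.
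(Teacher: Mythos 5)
Your proposal is correct and follows the same skeleton as the paper's proof --- verify weak stabilization, the moment condition, and non-triviality of the limit add-one cost, then invoke Theorem~\ref{thm:homo-mark}(i) --- but two of the three ingredients are executed by genuinely different means. For weak stabilization the paper does not use Mayer--Vietoris: it proves an elementary linear-algebra lemma showing that for an increasing sequence $\cK^{(n)}$ with $\tilde \cK^{(n)} = \cK^{(n)} \sqcup K_0$ for a fixed finite set of simplices $K_0$, the quantity $\dim \tilde Z_k^{(n)} - \dim Z_k^{(n)}$ is non-decreasing (via the identity $\tilde Z_k^{(n)} \cap Z_k^{(n+1)} = Z_k^{(n)}$ inside $\tilde C_k^{(n+1)}$) while $\dim \tilde C_{k+1}^{(n)} - \dim C_{k+1}^{(n)}$ is constant, so $\beta_k(\tilde\cK^{(n)}) - \beta_k(\cK^{(n)})$ is monotone and bounded, hence convergent. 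Your identity $D_\zero \beta_k(W_n) = \dim\ker(\alpha_{k-1,n}) - \rank(\alpha_{k,n})$, with monotonicity of kernels and ranks of the inclusion-induced maps $H_j(\mathrm{lk}(\zero)) \to H_j(\cK_n)$, is a topological repackaging of the same monotone-and-bounded phenomenon; it is valid (the closed star is a cone over the link, hence acyclic, and $\alpha_{j,n+1}$ factors through $\alpha_{j,n}$), at the cost of the $H_0$/reduced-homology bookkeeping you flag, which the paper's cycle-space computation sidesteps entirely. For positivity the paper conditions on the component of $\zero$ in $G(\cP\cup\{\zero\})$ being isomorphic to the cross-polytope graph $O_k$, so that adding $\zero$ completes the polytope and creates a $k$-class, giving $\Delta = +1$ with positive probability via the Mecke formula; you instead cone off a pre-existing copy of the polytope boundary and kill a class, giving $\Delta = -1$. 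Both yield $\Prob(\Delta \neq 0) > 0$; for your version, make sure the event also forbids edges from $\zero$ to points of $\cP$ outside $B_R(\zero)$ (probability at least $e^{-\lambda m_\varphi}>0$), since otherwise $\mathrm{lk}(\zero)$ need not be exactly the polytope boundary. Finally, your moment bound via $N \sim \Poisson(\lambda m_\varphi)$ is a clean and correct shortcut for the paper's reduction to the subgraph-count moment estimates, since every $j$-simplex created by adding $\zero$ is a clique inside the neighbour set of $\zero$.
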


This is again an application of Theorem~\ref{thm:homo-mark}. Thus, we need to show the following 
\begin{itemize}
	\item[(i)] Betti numbers are weakly stabilizing; 
	\item[(ii)] the moment condition holds;
	\item[(iii)] and the limit add-one cost is non-trivial, if $\varphi \in (0,1)$.
\end{itemize}
The moment condition follows immediately from that for subgraph counts, and hence the proof is omitted. We now show the weak stabilization and the non-triviality in sequent.

\begin{lemma}
Let $\{\cK^{(n)}\}_{n = 1}^\infty$ be a sequence of increasing simplicial complexes. Assume that $K_0$ is a finite set of complexes which is disjoint from $\cK^{(n)}$ such that $\tilde \cK^{(n)} := \cK^{(n)} \sqcup K_0$ is also a simplicial complex for all $n$.
Then the following limit exists
	\[
		\lim_{n \to \infty } (\beta_k(\tilde \cK^{(n)}) - \beta_k(\cK^{(n)})).
	\]
\end{lemma}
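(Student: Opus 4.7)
I would prove this via the long exact sequence in homology associated to the short exact sequence of chain complexes
\[
0 \to C_\bullet(\cK^{(n)}) \to C_\bullet(\tilde\cK^{(n)}) \to Q_\bullet^{(n)} \to 0,
\]
where $Q_\bullet^{(n)}$ is the quotient, together with the fact that $K_0$ is a fixed finite set of simplices.

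The first step is to identify $Q_\bullet^{(n)} := C_\bullet(\tilde\cK^{(n)})/C_\bullet(\cK^{(n)})$ as a chain complex independent of $n$. Because $K_0$ is disjoint from $\cK^{(n)}$, we have $Q_\bullet^{(n)} \cong C_\bullet(K_0)$ as graded vector spaces. The induced boundary sends a class $[\sigma]$ with $\sigma \in K_0$ to the part of $\partial \sigma$ lying in $K_0$, since faces of $\sigma$ that are not in $K_0$ lie in $\cK^{(n)}$ (because $\tilde\cK^{(n)}$ is a simplicial complex) and are therefore killed in the quotient. This ``$K_0$-part of $\partial$'' depends only on the combinatorics of $K_0$, so $Q_\bullet := Q_\bullet^{(n)}$ is independent of $n$, and $H_\bullet(Q)$ is a fixed finite-dimensional vector space since $K_0$ is finite.

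The second step applies the long exact sequence in homology
\[
\cdots \to H_{k+1}(Q) \xrightarrow{\delta_{k+1}^{(n)}} H_k(\cK^{(n)}) \to H_k(\tilde\cK^{(n)}) \to H_k(Q) \xrightarrow{\delta_k^{(n)}} H_{k-1}(\cK^{(n)}) \to \cdots
\]
A standard dimension count over exact sequences of finite-dimensional vector spaces gives
\[
\beta_k(\tilde\cK^{(n)}) - \beta_k(\cK^{(n)}) = \dim H_k(Q) - \dim \Image \delta_k^{(n)} - \dim \Image \delta_{k+1}^{(n)}.
\]

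The third step is to show that each $\dim \Image \delta_j^{(n)}$ is eventually constant in $n$. The inclusion $\cK^{(n)} \hookrightarrow \cK^{(n+1)}$ induces a morphism of the two short exact sequences above which is the identity on $Q_\bullet$, and naturality of the connecting homomorphism yields $\delta_j^{(n+1)} = \iota_n \circ \delta_j^{(n)}$, where $\iota_n \colon H_{j-1}(\cK^{(n)}) \to H_{j-1}(\cK^{(n+1)})$ is induced by inclusion. Consequently $\dim \Image \delta_j^{(n+1)} \le \dim \Image \delta_j^{(n)}$, so the sequence is a non-increasing sequence of non-negative integers bounded by $\dim H_j(Q)<\infty$ and is therefore eventually constant. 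Combined with the identity above, this makes $\beta_k(\tilde\cK^{(n)}) - \beta_k(\cK^{(n)})$ eventually constant, which is even stronger than the claimed convergence.

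The main obstacle, and really the only subtle point, is the identification of $Q_\bullet$ as a chain complex independent of $n$; once that is in hand, the rest is a purely algebraic dimension count combined with naturality of the long exact sequence.
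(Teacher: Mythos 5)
Your proposal is correct, and it takes a genuinely different route from the paper. The paper works entirely at the level of cycle and chain spaces: it writes $\beta_k = \dim Z_k + \dim Z_{k+1} - \dim C_{k+1}$, observes that the chain-space difference $\dim \tilde C_{k+1}^{(n)} - \dim C_{k+1}^{(n)}$ is the constant number of $(k+1)$-simplices in $K_0$, and then proves that $\dim \tilde Z_k^{(n)} - \dim Z_k^{(n)}$ is non-decreasing in $n$ via the elementary intersection identity $\tilde Z_k^{(n)} \cap Z_k^{(n+1)} = Z_k^{(n)}$ inside $\tilde C_k^{(n+1)}$ together with the dimension formula for sums of subspaces; boundedness, and hence convergence, then comes from the separate simplex-count inequality \eqref{Betti-estimate}. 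You instead invoke the long exact sequence of the pair, and the crucial point you correctly isolate --- that the quotient complex $Q_\bullet$ is independent of $n$ because every face of a simplex of $K_0$ not lying in $K_0$ is automatically in $\cK^{(n)}$ --- together with naturality of the connecting homomorphism, gives $\delta_j^{(n+1)} = \iota_n \circ \delta_j^{(n)}$ and hence non-increasing image dimensions bounded below by $0$. Your route uses more homological machinery but buys automatic boundedness (no appeal to \eqref{Betti-estimate} is needed) and yields the eventual constancy directly; it also exhibits the same monotonicity the paper proves, since your formula $\beta_k(\tilde\cK^{(n)}) - \beta_k(\cK^{(n)}) = \dim H_k(Q) - \dim\Image\delta_k^{(n)} - \dim\Image\delta_{k+1}^{(n)}$ shows the difference is non-decreasing. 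The paper's argument is more self-contained and elementary, which fits its linear-algebraic setup of Betti numbers; either proof is complete.
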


\begin{proof}
	From the definition of Betti numbers, we can write 
\begin{align*}
	\beta_k(\tilde\cK^{(n)}) - \beta_k(\cK^{(n)}) &= \left\{\dim \tilde Z_k^{(n)} - \dim Z_k^{(n)} \right\}  + \left\{ \dim \tilde Z_{k+1}^{(n)}  - \dim Z_{k+1}^{(n)} \right\} \\
	&\quad - \left\{\dim \tilde C_{k+1}^{(n)} - \dim C_{k+1}^{(n)} \right\}.
\end{align*}
Here we use the superscript ${}^{(n)}$ and that with the symbol $\tilde{}$ to indicate quantities of $\cK^{(n)}$ and $\tilde \cK^{(n)}$, respectively. It follows from the assumption $\tilde \cK^{(n)} = \cK^{(n)} \sqcup K_0$ that the difference   $(\dim \tilde C_{k+1}^{(n)} - \dim C_{k+1}^{(n)}) $ is a constant.

	Let $\partial_k \colon \tilde C_k^{(n+1)} \to \tilde C_{k-1}^{(n+1)}$ denote the boundary operator for $\tilde \cK^{(n+1)}$. Since $\cK^{(n)}, \tilde \cK^{(n)}$ and $\cK^{(n+1)}$ are sub-complexes of $\tilde \cK^{(n+1)}$, we get that 
	\[
		Z_k^{(n)}  = \ker \partial_k \cap C_k^{(n)}, \quad \tilde Z_k^{(n)}  = \ker \partial_k \cap \tilde C_k^{(n)}, \quad Z_k^{(n+1)}  = \ker \partial_k \cap C_k^{(n+1)}.
	\]
As subspaces of $\tilde C_k^{(n+1)}$, we can easily check the relation 
\[
	\tilde C_k^{(n)} \cap C_k^{(n+1)} = C_k^{(n)},
\]
from which we deduce that
\[
	\tilde Z_k^{(n)} \cap Z_k^{(n+1)} = Z_k^{(n)}.
\]
It then follows that 
\begin{align*}
	\dim Z_k^{(n)} &= \dim \tilde Z_k^{(n)}  + \dim Z_k^{(n+1)} - \dim ( \tilde Z_k^{(n)} + Z_k^{(n+1)}) \\
	&\ge \dim \tilde Z_k^{(n)}  + \dim Z_k^{(n+1)}  - \dim  \tilde Z_k^{(n+1)}.
\end{align*}
This implies the increasing property of the sequence $\{\dim \tilde Z_k^{(n)}  - \dim Z_k^{(n)}  \}_{n}$. Since the roles of $k$ and $k+1$ are equal, we conclude that $\beta_k(\tilde\cK^{(n)}) - \beta_k(\cK^{(n)})$ is an increasing sequence. In addition, it is bounded by taking into account of the inequality \eqref{Betti-estimate}. Therefore, the limit exists, which completes the proof.
\end{proof}

\begin{lemma}
	$\beta_k$ is weakly stabilizing.
\end{lemma}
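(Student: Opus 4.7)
The plan is to verify condition (ii) from the discussion preceding Theorem~\ref{thm:homo-mark}: for every \emph{increasing} sequence of cubes $\{W_n\}$ tending to $\R^d$, the add-one cost $D_\zero \beta_k(W_n)$ converges in probability. Via Proposition~\ref{prop:D-tilde} this will upgrade to weak stabilization on arbitrary sequences of cubes. I would in fact aim for almost-sure convergence on a full-probability event, reducing the matter to a single appeal to the preceding lemma.

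First, set $\cK^{(n)}:=\cX_{W_n}$ and $\tilde\cK^{(n)}:=\cX(G(\cP\cup\{\zero\})|_{W_n})$. Since the edges within $\cP$ are realized with the same marks in both graphs, $\{\cK^{(n)}\}$ is an increasing chain of sub-complexes of $\tilde\cK^{(n)}$, and $\tilde\cK^{(n)}\setminus\cK^{(n)}$ consists precisely of the simplices of $\tilde\cK^{(n)}$ that contain the vertex $\zero$.

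The essential input is that the neighborhood
\[
N_\zero:=\{x\in\cP:\{\zero,x\}\text{ is an edge of }G(\cP\cup\{\zero\})\}
\]
is almost surely finite: by Mecke's formula, $\Ex[|N_\zero|]=\lambda\int_{\R^d}\varphi(x)\,dx<\infty$ under the standing assumption of this section. Consequently the collection
\[
K_0:=\{\sigma\in\cX(G(\cP\cup\{\zero\})):\zero\in\sigma\}
\]
of simplices of the infinite clique complex containing $\zero$ is almost surely a finite set, entirely determined by $N_\zero$. For all sufficiently large $n$ (depending on $\omega$) one has $N_\zero\subset W_n$, and hence $\tilde\cK^{(n)}=\cK^{(n)}\sqcup K_0$ with the \emph{same} $K_0$ for every such $n$. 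The previous lemma then delivers almost-sure convergence of $D_\zero\beta_k(W_n)=\beta_k(\tilde\cK^{(n)})-\beta_k(\cK^{(n)})$, which is all that is required.

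The only real obstacle is the step just isolated, namely securing a single fixed $K_0$ eventually, because this is what allows the preceding lemma to be invoked. It is precisely at this point that the standing assumption $\int_{\R^d}\varphi\,dx<\infty$ of the section enters: without it the neighborhood of $\zero$ could be infinite and the stabilization of the added simplices would fail.
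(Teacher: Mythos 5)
Your argument is correct and follows essentially the same route as the paper: fix a realization where the neighborhood of $\zero$ is finite (which you justify via Mecke's formula and $\int\varphi<\infty$, where the paper simply asserts it), note that for large $n$ the set $K_0$ of simplices containing $\zero$ stabilizes, and invoke the preceding lemma to get almost-sure convergence of $D_\zero\beta_k(W_n)$ along increasing sequences, which suffices by the equivalence in Proposition~\ref{prop:D-tilde}. No gaps.
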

\begin{proof}
Let $W_n$ be a sequence of increasing cubes tending to $\R^d$. Let $\omega \in \Omega$ be such that the set $\cP$ is locally finite and that the graph $G(\cP \cup \{\zero\})$ has a finite number of edges connected to $\zero$. Note that the set of such $\omega$ has probability one. Then there is a number $N$ (depending on $\omega$) such that for $n \ge N$, $W_n$ contains all vertices connected to $\zero$. Let $\cK^{(n)} = \cX_{W_n}$ and let $\tilde \cK^{(n)}$ be the clique complex of the graph $G(\cP \cup \{\zero\})|_{W_n}$. Then for $n \ge N$, $K_0 = \tilde \cK^{(n)} \setminus \cK^{(n)}$ does not change. By definition of the add-one cost, it holds that 
\[
	D_\zero \beta_k(W_n) = \beta_k(\tilde \cK^{(n)}) - \beta_k(\cK^{(n)}),
\]
from which the weak stabilization follows from the above lemma.
\end{proof}

\textbf{On the positivity of $\bar \beta_k$ and $\sigma_k^2$.} Let $O_k$ be the graph on $[2k+2]$ with all except the following edges $\{\{1, k+1\}, \{2, k+3\}, \dots, \{k, 2k+2\}\}$. The clique complex $\cX_{O_k}$ is a boundary of the $(k+1)$-dimensional cross-polytope (Definition 3.3 in \cite{Kahle-2011}). It was known that \cite{Kahle-2009} $\beta_k(\cX_{O_k}) = 1$, and that $\beta_k(\cX_A) = 0$ for any graph $A$ on less than $2k+2$ vertices.

\begin{lemma}
Assume that the graph $O_k$ is feasible. Then $\bar \beta_k > 0$ and $\sigma_k^2 > 0$. In particular, if $\varphi \in (0,1)$, then $O_k$ is feasible and hence, both $\bar \beta_k $ and $\sigma_k^2$ are positive.
\end{lemma}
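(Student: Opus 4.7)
The statement has three parts. The last, feasibility of $O_k$ when $\varphi \in (0,1)$, is immediate: for any $2k+2$ distinct points $x_0,\dots,x_{2k+1}$ in $\R^d$, the probability $\Prob(\Gamma(x_0,\dots,x_{2k+1}) \simeq O_k)$ is bounded below by a product of $\varphi$- and $(1-\varphi)$-factors, each strictly between $0$ and $1$, so is positive and hence $h_{O_k}>0$. I focus on positivity of $\bar\beta_k$ and $\sigma_k^2$.

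\textbf{Positivity of $\bar\beta_k$.} Since the clique complex of any graph is the disjoint union of the clique complexes of its connected components, \eqref{additive} gives
\[
	\beta_k(\cX_W) \;=\; \sum_{C}\beta_k(\cX_C) \;\ge\; \zeta_W^{(O_k)},
\]
where the sum is over connected components $C$ of $G(\cP)|_W$ and each $C \simeq O_k$ contributes $\beta_k(\cX_{O_k})=1$. Applying Theorem~\ref{thm:component}(i) with $A = O_k$ (feasible by assumption) gives $\bar\beta_k \ge \lim_{W\to\R^d} |W|^{-1}\Ex[\zeta_W^{(O_k)}]>0$.

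\textbf{Positivity of $\sigma_k^2$.} By Theorem~\ref{thm:homo-mark}(i) it suffices to prove $\Prob(\Delta \neq 0) > 0$ for the limit add-one cost $\Delta$ of $\beta_k$. Let $B$ be the graph obtained by deleting one vertex of $O_k$; then $B$ has $2k+1$ vertices, so $\beta_k(\cX_B)=0$, and feasibility of $O_k$ routinely implies feasibility of $B$ (marginalize out one vertex of a configuration realizing $O_k$). Fix $r>0$ small and define the event $E$ by the conjunction of:
\begin{itemize}
\item[(a)] $\cP \cap B_r(\zero) = \{x_1,\dots,x_{2k+1}\}$;
\item[(b)] the induced subgraph of $G(\cP)$ on these points realizes $B$;
\item[(c)] no $x_i$ is connected in $G(\cP)$ to any point of $\cP \setminus B_r(\zero)$;
\item[(d)] the (independent) edges drawn from $\zero$ to $\cP$ connect $\zero$ to exactly the $2k$ points among $\{x_i\}$ needed to complete an $O_k$ and to no other point of $\cP$.
\end{itemize}
Conditions (a) and (b) have positive probability for $r$ small by feasibility of $B$. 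For (c) and (d), the bound $1 - \prod_{i=1}^{2k+1}(1-\varphi(y-x_i)) \le \sum_i \varphi(y-x_i)$ together with $m_\varphi < \infty$ forces the corresponding Poisson-type exponential factor to be at least $\exp(-(2k+2)\lambda m_\varphi) > 0$. Thus $\Prob(E)>0$. On $E$, for any cube $W_n \supset B_r(\zero)$, the sets $\{x_1,\dots,x_{2k+1}\}$ and $\{\zero, x_1, \dots, x_{2k+1}\}$ are connected components of $G(\cP)|_{W_n}$ and of $G(\cP \cup \{\zero\})|_{W_n}$, respectively. Additivity \eqref{additive} applied to both decompositions yields
\[
	D_\zero \beta_k (W_n) \;=\; \beta_k(\cX_{O_k}) - \beta_k(\cX_B) \;=\; 1,
\]
so $\Delta = 1$ on $E$, and therefore $\sigma_k^2>0$.

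\textbf{Main obstacle.} The delicate point is the construction of $E$: one must simultaneously enforce a prescribed local combinatorial configuration (the $B$-pattern on $x_1,\dots,x_{2k+1}$, completed to $O_k$ upon adding $\zero$) and a no-edge condition to the entire infinite exterior. The integrability hypothesis $m_\varphi<\infty$ is precisely what prevents the exterior condition from driving $\Prob(E)$ to zero.
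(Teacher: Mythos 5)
Your argument is correct and essentially the paper's own: positivity of $\bar\beta_k$ via $\beta_k(W)\ge\zeta^{(O_k)}_W$ and Theorem~\ref{thm:component}(i), and positivity of $\sigma_k^2$ by exhibiting a positive-probability event on which the component of $\zero$ in $G(\cP\cup\{\zero\})$ is isomorphic to $O_k$, so that the limit add-one cost equals $\beta_k(\cX_{O_k})-\beta_k(\cX_B)=1$; the paper simply packages your event $E$ as $\Omega_0$ and gets its positive probability from the Mecke formula rather than by hand. One cosmetic slip: you should take $r$ \emph{large enough} (so that $B_r(\zero)$ contains a positive-measure set of configurations realizing $O_k$, whose existence is exactly feasibility), not small, and the joint positivity of (a)--(d) is cleanest read off directly from feasibility of $O_k$ rather than from feasibility of $B$ plus a separate condition (d).
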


\begin{proof}
Assume that $O_k$ is feasible. Recall that $\zeta_n^{(O_k)}$ denotes the number of components in $G(\cP_n)$ isomorphic to $O_k$. It follows from the property~\eqref{additive} that, 
\[
	\beta_k(W_n) \ge \zeta_n^{(O_k)},
\]
and then from Theorem~\ref{thm:component} that
\[
	\bar \beta_k \ge \lim_{n\to \infty} \frac{\zeta_n^{(O_k)}}{n} > 0.
\]

Next, for the positivity of the limiting variance, we will show that $\Delta$ is non-trivial. Let $\Omega_0$ be the event that the component containing $\zero$ in $G(\cP \cup \{\zero\})$ is isomorphic to $O_k$. Then by using the multivariate Mecke equation, we can prove that
\begin{align*}
	\Prob(\Omega_0) &= \frac{\lambda^{2k+1}}{(2k+1)!}\idotsint_{(\R^d)^{2k+1}} \psi_{O_k}(\zero, x_{[2k+1]}) \\
	&\quad \times \exp \bigg(\int_{\R^d} \bigg[ \prod_{i=0}^{2k+1} (1 - \varphi(y - x_i))  - 1 \bigg] dy\bigg) dx_{[2k+1]}  > 0.
\end{align*}
(See also Proposition~3.1 in \cite{Last-Nestmann-Schulte-2018}.)
On $\Omega_0$, when the cube $W$ is large enough, 
\[
D_\zero \beta_k (W) = \beta_k (\cX_{\Gamma(\zero, x_{[2k+1]})}) - \beta_k (\cX_{\Gamma(x_{[2k+1]})}) = \beta_k (\cX_{O_k}) >0.
\]
Therefore, $\Delta(\omega) >0$ on $\Omega_0$, that is, $\Delta$ is non-trivial. The proof is complete.
\end{proof}

\section{Size of the biggest component}

In this section, we aim to prove the central limit theorem for the size of the biggest cluster of $G(\cP|_W)$, or of $G(\cP)|_{W}$ as $W \rightarrow \R^d$ under some conditions on the connection function $\varphi$ as follows.  We suppose  that there exists $\phi \colon \R_+ \mapsto [0,1]$, such that  $\varphi(x)=\phi(|x|)$ for all $x \in \R^d$ and 
  \begin{itemize}
  	\item [(C1)] the function $\phi$ is continuous at $0$ and $\phi(0)=1$,
  	\item[(C2)] there exist positive constants $C_0, \epsilon_0$ such that for all $r> 0$,
  	\begin{equation} \label{cond:phi}
  	\phi(r) \leq C_0 r^{-(5d+\epsilon_0)}. 
  	\end{equation}
  \end{itemize}

For any cube $W \subset \R^d$, we denote the biggest connected component of $G(\cP)|_W$ (resp.\ of $G(\cP|_W)$), that is, the connected component with the largest number of vertices, by $\cC(W)$ (resp.\ $\cC(\cP|_W)$). When there are more than one biggest components, we choose $\cC(W)$ to be the component having the vertex with smallest coordinate in the lexicographic order. We will need the following result on the uniqueness of the infinite cluster in the graph $G(\cP)$.

\begin{lemma}[{\cite[Section 6.4]{MR}}] Assume that $\int_{\R^d} \varphi(x)dx \in (0, \infty)$. Then there is a critical parameter $\lambda_c \in (0, \infty)$ such that when $\lambda \in (0, \lambda_c)$, all the connected components of the random connection model are finite a.s., whereas when $\lambda>	\lambda_c$ the random graph has a unique infinite connected component.
\end{lemma}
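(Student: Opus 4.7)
\medskip
\noindent\textbf{Proof proposal.}
The plan is to follow the classical three-step continuum-percolation strategy (subcritical domination, supercritical renormalization, Burton--Keane uniqueness), adapted to the RCM as in \cite{MR}. Write $C(\zero)$ for the connected component of the origin in the Palm-augmented graph $G(\cP \cup \{\zero\})$, and set
\[
\theta(\lambda) = \Prob_\lambda(|C(\zero)| = \infty), \qquad \lambda_c := \inf\{\lambda > 0 : \theta(\lambda) > 0\}.
\]
Superposing two independent Poisson processes of intensities $\lambda$ and $\lambda' - \lambda$ yields a monotone coupling under which the $\lambda$-RCM sits as a spanning subgraph of the $\lambda'$-RCM, so $\theta$ is non-decreasing and $\lambda_c \in [0, \infty]$ is well-defined.

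To show $\lambda_c > 0$, I would explore $C(\zero)$ in a breadth-first fashion and dominate it by a Galton--Watson tree. Starting from $\zero$, the number of RCM neighbors is Poisson of mean $\lambda m_\varphi$, by the Mecke formula and the independence of edge marks; at each subsequent step the unexplored portion of $\cP$ is still a Poisson process of intensity $\lambda$, so the number of newly discovered neighbors is stochastically dominated by an independent Poisson$(\lambda m_\varphi)$ variable. When $\lambda m_\varphi < 1$ the dominating branching process is subcritical, hence $\Ex[|C(\zero)|] < \infty$, $\theta(\lambda) = 0$, and $\lambda_c \geq 1/m_\varphi > 0$.

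To show $\lambda_c < \infty$ (assuming $d \ge 2$), I would run a renormalization argument. Since $m_\varphi > 0$, there exist constants $R, \delta > 0$ such that $\varphi \ge \delta$ on a set of positive measure inside $B_R(\zero)$. Partition $\R^d$ by a lattice of cubes of large side $L$ and call a cube \emph{good} if it contains a Poisson point in a prescribed central sub-cube of diameter $\le R$ and its distinguished representative is connected in $G(\cP)$ to the distinguished representatives of all $2d$ neighboring good cubes. Goodness is a finite-range-dependent Bernoulli field on $\Z^d$ whose marginal tends to one as $\lambda \to \infty$, so by the Liggett--Schonmann--Stacey comparison theorem it dominates a supercritical Bernoulli site percolation, producing an infinite cluster of good cubes -- and hence an infinite cluster in $G(\cP)$ -- with positive probability. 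Thus $\theta(\lambda) > 0$ for large $\lambda$ and $\lambda_c < \infty$.

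Finally, for uniqueness of the infinite cluster when $\lambda > \lambda_c$, I would invoke the Burton--Keane argument on the enlarged marked Poisson space $(\hat \Omega, \hat \Prob)$ of Section~2, which is $\R^d$-ergodic. Ergodicity makes the number $N$ of infinite clusters a.s.\ equal to a constant $N_\infty$; a local-modification (finite-energy) argument excludes any $N_\infty = k$ with $2 \le k < \infty$, while the case $N_\infty = \infty$ is excluded by counting trifurcation points in a cube of side $n$: translation invariance would force density $\Omega(n^d)$, but a topological boundary count gives at most $O(n^{d-1})$, a contradiction. Hence $N_\infty \in \{0, 1\}$, and $\theta(\lambda) > 0$ pins down $N_\infty = 1$. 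The step I expect to be hardest is adapting the Burton--Keane local modification to a model whose edges depend on independent marks in addition to the Poisson configuration; this is handled cleanly by modifying points and marks simultaneously on $(\hat\Omega, \hat\Prob)$, as carried out in \cite{MR}.
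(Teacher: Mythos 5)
The paper offers no proof of this lemma---it is quoted directly from Meester and Roy \cite{MR}---and your three-step outline (branching-process domination for $\lambda_c>0$, coarse-grained comparison with supercritical lattice percolation via Liggett--Schonmann--Stacey for $\lambda_c<\infty$, and a Burton--Keane ergodicity/finite-energy argument for uniqueness) is precisely the strategy carried out in that reference, so the approaches coincide. Your caveat that the bound $\lambda_c<\infty$ requires $d\ge 2$ is well taken: the lemma as stated is implicitly restricted to $d\ge 2$, since for $d=1$ and rapidly decaying $\varphi$ one has $\lambda_c=\infty$.
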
  

\begin{theorem} \label{thm:size}
	Assume that the conditions (C1)-(C2) hold. 
	 Then there exists $\lambda^\star \in (\lambda_c, \infty)$, such that for any fixed $\lambda > \lambda^\star$, as the sequence of cubes $W$'s tends to $\R^d$,
	 \[
	\frac{|\cC(W)| - \Ex[|\cC(W)|]}{\sqrt {|W|}} \dto \Normal(0, \sigma^2),
	\]
where $\sigma^2 = \sigma^2(\lambda) >0$.	
\end{theorem}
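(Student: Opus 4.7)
The plan is to apply Theorem~\ref{thm:homo-mark}(i) to the functional $f(G)=|\cC(G)|$. One must check weak stabilization with a nontrivial limit $\Delta$ and the $p$-moment bound~\eqref{p-moment} for some $p>2$. Both will follow from two facts about the supercritical phase at sufficiently large intensity: the biggest component in a large window is essentially the trace of the unique infinite cluster, and the finite clusters pulled into the origin have enough moments.

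\textbf{Two geometric inputs.} Let $\cC_\infty$ denote the almost surely unique infinite cluster of $G(\cP)$ for $\lambda>\lambda_c$. For $\lambda$ larger than some $\lambda^\star\in(\lambda_c,\infty)$, I aim to establish, using (C1)-(C2) together with a renormalization/Peierls argument in the spirit of~\cite{MR}:
\begin{itemize}
\item[\rm (I)] for any increasing sequence of cubes $W_n\nearrow\R^d$, almost surely for all $n$ large, the biggest component $\cC(W_n)$ of $G(\cP)|_{W_n}$ coincides with $\cC_\infty\cap W_n$, and the same identification (augmented by $\zero$ and the finite clusters the origin absorbs) holds for $G((\cP\cup\{\zero\}))|_{W_n}$;
\item[\rm (II)] with $\cR_\zero$ defined as the set of vertices of $\cP$ lying in finite clusters of $G(\cP)$ that become joined to $\zero$ in $G(\cP\cup\{\zero\})$, one has $\Ex[|\cR_\zero|^p]<\infty$ for some $p>2$.
\end{itemize}
The high-intensity condition $\lambda>\lambda^\star$ is used in (I) so that a coarse-grained occupancy process on $\Z^d$ dominates a supercritical Bernoulli site percolation with parameter arbitrarily close to~$1$, forcing any atypical non-giant finite cluster to be enclosed by a closed renormalized contour. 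The decay exponent $5d+\epsilon_0$ in (C2) is chosen precisely so that iterated convolutions of $\varphi$ controlling the exploration tree starting from $\zero$ are summable with enough margin to extract moments of order strictly larger than $2$; this yields~(II).

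\textbf{From the inputs to the CLT.} Set $\cA=\{\zero\text{ connects to }\cC_\infty\text{ in }G(\cP\cup\{\zero\})\}$ and define
\[
\Delta \;:=\; (1+|\cR_\zero|)\,\one_{\cA}.
\]
By Proposition~\ref{prop:increasing-sequence} it suffices to check convergence along increasing sequences $W_n\nearrow\R^d$, and input~(I) gives $D_\zero f(W_n)\to\Delta$ almost surely (on $\cA$ the biggest component grows by $1+|\cR_\zero\cap W_n|\to 1+|\cR_\zero|$; on $\cA^c$ it is unchanged once $W_n$ engulfs the finite cluster of $\zero$). For the moment bound, a case analysis on whether the giant dominates in $W$, combined with input~(I), yields $|D_\zero f(W)|\le 1+|\cR_\zero|$ outside an event whose probability decays faster than any polynomial in $|W|$, so~\eqref{p-moment} follows from~(II) after absorbing the negligible event via H\"older's inequality. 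Non-triviality is immediate: (C1) implies that with positive probability $\zero$ is directly connected to a nearby point of $\cP$ that lies in $\cC_\infty$, so $\Prob(\Delta\ge 1)>0$. Theorem~\ref{thm:homo-mark}(i) then delivers the claimed CLT with $\sigma^2>0$.

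\textbf{Main obstacle.} The core difficulty is input~(II): obtaining $p>2$ moments for the size of the finite clusters absorbed into $\zero$ under only polynomial decay of $\varphi$. This is where the two hypotheses on $\varphi$ and the high-intensity condition are fully consumed. The plan is a Peierls contour argument: couple $G(\cP)$ with a supercritical Bernoulli percolation on a coarse lattice of density close to $1$; any atypical finite cluster of the fine picture is enclosed by a closed contour of the coarse picture whose length has polynomial-moment tails inherited from~(C2). Proving that $\lambda^\star$ can be chosen so as to simultaneously drive the coarse density close enough to $1$ and make the resulting contour-length moments strictly exceed $2$ under the exponent $5d+\epsilon_0$ is the principal technical content; everything else (identification of the giant, weak stabilization, non-triviality) is relatively routine once~(II) is in hand.
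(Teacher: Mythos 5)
Your top-level strategy coincides with the paper's: apply Theorem~\ref{thm:homo-mark}(i) to $f=|\cC(\cdot)|$, identify the limit add-one cost as $\Delta=\I(\zero\sim\cC(\R^d))\,(1+\#\{x\in\cP: x\leftrightarrow\zero,\ x\notin\cC(\R^d)\})$ (your $(1+|\cR_\zero|)\one_{\cA}$ is the same random variable), get non-triviality from $\Prob(\zero\sim\cC(\R^d))>0$ for $\lambda>\lambda_c$, and prove the two hypotheses by a renormalization/coarse-graining comparison with supercritical percolation at large $\lambda$. However, both of your geometric inputs are stated in a form that does not hold, and the gap is exactly where the technical work of Section~4 lives.

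First, input (I) is false as literally stated: $\cC_\infty\cap W_n$ need not be connected in $G(\cP)|_{W_n}$, since paths of the infinite cluster joining two points of $W_n$ may leave $W_n$; the biggest component $\cC(W_n)$ is at best one piece of this trace. The paper only proves the in-probability statement $\Prob(\cC_\delta(B_\zero(t))\subset\cC(B_\zero(t))\subset\cC(\R^d))\ge 1-Ct^{-(3d+\epsilon_1)}$ (Corollary~\ref{corr:cC}), and the weak stabilization is then run through an approximating variable $\Delta_n'$ rather than an almost-sure identification. This is repairable (convergence in probability is all that Proposition~\ref{prop:increasing-sequence} needs), but the second consequence of the same imprecision is not: your reduction of the moment bound to input (II) via ``$|D_\zero f(W)|\le 1+|\cR_\zero|$ outside a super-polynomially small event'' does not go through. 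The deterministic bound is $0\le D_\zero f(W)\le 1+\#\{x\in\cP|_W: x\overset{W}{\leftrightarrow}\zero,\ x\notin\cC(W)\}$, and this count is \emph{not} dominated by $|\cR_\zero|$: it includes points of the infinite cluster whose trace in $W$ happens to be disconnected from $\cC(W)$. Controlling these is the heart of the matter, and what is actually available (and all that the renormalization of Proposition~\ref{prop:theta} and Lemma~\ref{prop:expdecay} delivers) is a \emph{per-point, uniform-in-$W$ polynomial} decay $\Prob(x\overset{W}{\leftrightarrow}\zero,\ x\notin\cC(W))\le C|x|_\infty^{-(3d+\epsilon)}$ — not a super-polynomial bound on a single global bad event. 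The exponent $3d+\epsilon$ (forced by $5d+\epsilon_0$ in (C2) through the long-edge estimate $\kappa(\alpha,t)=\cO(t^{-(3d+\epsilon_0)})$) is calibrated so that, after expanding $\Ex[\Delta(W)^3]$ by the multivariate Mecke formula, the triple integral of $\Prob(\cA_x')^{1/3}\Prob(\cA_y')^{1/3}\Prob(\cA_z')^{1/3}$ converges; a Hölder absorption of a ``negligible event'' is not how the uniform third moment is obtained, and there is no route to $p>2$ moments of $D_\zero f(W)$ from moments of $|\cR_\zero|$ alone. You have correctly located the hard step, but the proposed reduction misstates what must be proved there.
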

\begin{proof} 
Let us begin with an expression for the add-one cost. Let $W$ be a cube containing the origin $\zero$. Recall that $\cC(W)$ denotes the biggest connected component in $G(\cP)|_{W}$. Note that $G(\cP \cup \{\zero\}) |_{W}$ is obtained from $G(\cP)|_{W}$ by adding one vertex $\zero$ and edges from $\zero$. To identify the biggest component  in $G(\cP \cup \{\zero\}) |_{W}$ which is denoted by $\cC'(W)$, there are three cases to consider.

Case 1: the vertex $\zero$ is  connected to $\cC(W)$. Then it is clear that the biggest component in $\cC'(W)$ is the connected component containing $\cC(W)$. Thus, the add-one cost is written as 
\[
	\Delta(W) := D_\zero f(W) = |\cC'(W)| - |\cC(W)| = 1 + \#\{x \in \cP|_W : x \overset{W}{\leftrightarrow} \zero, x \not \in \cC(W) \}.
\]
Here $x \overset{W}\leftrightarrow o$ means there is a path $\gamma =(x_i)_{i=0}^l \subset \cP|_W$, such that from $x=x_0, x_l =o$  and $x_i \sim x_{i+1}$ for all $i=0,\ldots,l-1$. For simplicity, we write $x \leftrightarrow o$ in case $x \overset{\R^d}\leftrightarrow o$.

Case 2: the vertex $\zero$ is not connected to $\cC(W)$ and the new component containing $\zero$ becomes the biggest one. In this case, 
\begin{align*}
	\Delta(W) &= 1 + \#\{x \in \cP|_W : x \overset{W}\leftrightarrow \zero \} - |\cC(W)|\\
	&= 1 + \#\{x \in \cP|_W : x \overset{W}\leftrightarrow \zero, x \not \in \cC(W) \} - |\cC(W)|.
\end{align*}

Case 3: the vertex $\zero$ is not connected to $\cC(W)$ and the new component containing $\zero$ has size smaller than $\cC(W)$.  When it happens, then  $\cC'(W) = \cC(W)$, and hence $\Delta(W) = 0$.

To apply Theorem \ref{thm:homo-mark}, we will show the weak stabilization and a moment condition with $p = 3$,
\begin{equation} \label{edt3}
	\sup_{\zero \in W: \text{cube}} \E[\Delta(W)^3] < \infty. 
\end{equation}
Define
\begin{align}
	\Delta:  = \I(o \sim \cC(\R^d) ) \Big( 1 + \# \{x \in \cP :  x \leftrightarrow o, x \not \in \cC(\R^d) \} \Big),
\end{align}	
where $\cC(\R^d)$ is the unique infinite cluster in $G(\cP)$, and $\zero \sim \cC(\R^d)$ is the event that $\zero$ is connected to $\cC(\R^d)$. For the weak stabilization, we will prove in Subsection \ref{sss:d-i}  that as a sequence of cubes $W$'s tends to $\R^d$,
\begin{equation}\label{delta-infinity}
\Delta(W) \Pto \Delta.
\end{equation}  
The idea is that, with high probability, $\cC(W)$ belongs to $\cC(\R^d)$, and Case 2 does not happen. For the moment condition, by using an estimate that 
\[
	0 \le \Delta(W) \le 1 + \#\{x \in \cP|_W : x \overset{W}\leftrightarrow \zero, x \not \in \cC(W) \}. 
\]
showing the moment condition reduces to a problem of estimating the probability 
\[
	\Prob(x \overset{W}\leftrightarrow \zero, x \not \in \cC(W) ).
\] 
We will show it in the next sub-section.

Finally, note that when  $\lambda > \lambda_c$, we have $\Prob( \Delta \neq 0) = \Prob (o \sim \cC(\R^d) ) >0$. Hence by Theorem \ref{thm:homo-mark}, the limiting variance $\sigma^2 >0$, which completes the proof of Theorem~\ref{thm:size}.
\end{proof}

\begin{remark}
	We guess that Theorem \ref{thm:size} holds for all $\lambda > \lambda_c$. To reduce the condition that $\lambda > \lambda^\star$  to $\lambda > \lambda_c$, it appears to us that we need  some renormalization of Russo--Seymour--Welsh type, as done in \cite[Chapter 10]{Penrose-book} for random geometric graph. However, this task for general random connection models is more complicated and highly nontrivial, so we leave it for future research. 	  In fact, key tools in the proof of  the weak stabilization \eqref{delta-infinity} and the moment condition \eqref{edt3} are the renormalization steps to estimate the decay of the probability that there exists a long path not intersecting to the biggest cluster, see  in Proposition \ref{prop:theta} and Lemma \ref{lem:renorm}.   In this estimate, we need $\lambda$ to be large enough for an initial ingredient of the renormalization procedure, see in particular Lemma \ref{lem:comp} and the condtion \eqref{lbat}. 
\end{remark}

In the next subsection, renormalization estimates  and some preparations are proved. The proofs of the weak stabilization and the moment condition are then given in Subsection \ref{ssec:1-2}.

\subsection{ Renormalization and preliminaries} \label{ssec:renor}
  For each $\delta>0$, we tessellate the whole space $\R^d$ to cubes of size $\delta$ and denote the collection of cubes by $\Gamma$.  Let $G_{\delta}$ be the random graph obtained from $G(\cP)$ by deleting the edges between vertices in non-adjacent cubes (that is, keeping only edges between vertices in the same cube or in adjacent cubes). 
For each cube $B \in \Gamma$, when $B \cap \cP \neq \varnothing$, we choose an arbitrary point in $B \cap \cP \neq \varnothing$ (in some deterministic way), say $x_B$, to be the representation of $B$.  Let $\textrm{Per}(\delta)$ be the induced subgraph of  $G_{\delta}$ restricted on the vertex set  $V=\{x_B: \, B \in \Gamma,  B \cap \cP =\varnothing \}$. Then for each cube $\Lambda$, we define
  \[  \cC^{{\rm per}}_{\delta} (\Lambda) = \textrm{ the biggest cluster of $\rm{Per}(\delta)|_{\Lambda}$},\]
  and
     \[ \cC_{\delta} (\Lambda) = \textrm{ the connected component of  $G_{\delta}|_{\Lambda}$ containing $\cC^{{\rm per}}_{\delta} (\Lambda)$}. \]
     
For all $t \geq 2s >0$, define 
    \begin{align}
     \beta_{\delta}(t) &= \Prob(\cC_{\delta}(B_o(t)) \not \subset \cC_{\delta}(\R^d)),\\
     \nu_{\delta}(s, t) &=  \sup_{y: |y|_{\infty} \leq t-s } \Prob(\cC_{\delta}(B_y(s)) \not \subset \cC_{\delta}(B_o(t)) ).
      \end{align}
Here $|x|_\infty = \max_{1 \le i \le d} |x_i|$ denotes the infinity norm of $x = (x_1, \dots, x_d) \in \R^d$, and $B_x(t) = \{ y \in \R^d : |y - x|_\infty \le t\}$ denotes the closed ball of radius $t$ centered at $x$ with respect to the infinity norm. Notice that we have used $B_r(x)$ to denote the closed ball of radius $r$ centered at $x$ with respect to the Euclidean norm. In this section, for the simplicity of notation we denote the closed ball under the infinity norm by $B_x(r)$.

\begin{lemma} \label{lem:comp}
	Assume that the condition (C1) holds. 
	\begin{itemize}
		\item [\rm(i)]
		There exist positive constants $ c_0, \delta_0, \lambda_0$, such that for all $\lambda > \lambda_0$, and $t,s$ large enough satisfying $\tfrac{t}{2}\geq s \geq \sqrt{t} $, it holds that
	 \begin{equation*}
	 \nu_{\delta_0}(s,t) \leq \exp(-c_0s),
	 \end{equation*}
	 and
	  \begin{equation*}
	   \beta_{\delta_0} (t) +  \Prob(|\cC_{\delta_0}(B_o(t))| \leq c_0t^d)\leq \exp(-c_0t).
	  \end{equation*}
	  
\item[\rm(ii)] For any fixed $\delta$ and $\Lambda$, 
       \begin{equation*}
       \lim_{\lambda \rightarrow \infty} \Prob(|G(\cP|_{\Lambda})|=|\cC_{\delta}(\Lambda)|) =1.
       \end{equation*}
     \end{itemize}
 	  
\end{lemma}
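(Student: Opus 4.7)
The plan is to reduce both parts to classical estimates for supercritical Bernoulli site percolation on the rescaled lattice $\delta_0\Z^d$, via a renormalization coupling of random-geometric-graph type as in \cite[Chapter~10]{Penrose-book}.

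\textbf{Part (i).} I would call a cube $B\in\Gamma$ \emph{good} when (a) $B$ and each of its $3^d-1$ lattice neighbours in $\Gamma$ contain at least one point of $\cP$, and (b) for every such neighbour $B'$ the representatives $x_B$ and $x_{B'}$ lie in the same connected component of $G_{\delta_0}$ restricted to $B\cup B'$. Any two points in $B\cup B'$ are within distance $2\delta_0\sqrt d$, so by (C1) each potential edge has probability at least $\phi(2\delta_0\sqrt d)$, which tends to $1$ as $\delta_0\to 0$. Combined with Poisson concentration (many points in each cube for $\lambda$ large), the marginal $\Prob(B\text{ good})$ can be pushed arbitrarily close to $1$ by first shrinking $\delta_0$ and then enlarging $\lambda_0$. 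Since goodness depends on $\cP$ only through a bounded window around $B$, the resulting $\{0,1\}$-field on $\Gamma$ is finite-range dependent, and a Liggett--Schonmann--Stacey type domination theorem lets us dominate it from below by an independent Bernoulli site percolation with parameter as close to~$1$ as we like, in particular deep inside the supercritical phase.

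Given this coupling the three bounds are classical. The event $\{\cC_{\delta_0}(B_o(t))\not\subset\cC_{\delta_0}(\R^d)\}$ forces an enclosing surface of bad cubes around $B_o(t)$, so a Peierls contour bound gives $\beta_{\delta_0}(t)\le e^{-c_0 t}$. Similarly $\{\cC_{\delta_0}(B_y(s))\not\subset\cC_{\delta_0}(B_o(t))\}$ requires an enclosing bad contour at infinity-distance $\ge s$ from $y$ staying inside $B_o(t)$, giving $\nu_{\delta_0}(s,t)\le e^{-c_0 s}$. The volume estimate $\Prob(|\cC_{\delta_0}(B_o(t))|\le c_0 t^d)\le e^{-c_0 t}$ is a Deuschel--Pisztora style bound: for supercritical Bernoulli percolation with parameter close to $1$, the largest cluster in a box of side $t$ has volume of order $t^d$ with exponentially small error, and through the coupling this transfers into a lower bound on $|\cC_{\delta_0}(B_o(t))|$ since every good cube already contains at least one point of $\cP$.

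\textbf{Part (ii).} Fix $\delta$ and a bounded $\Lambda$. Using (C1), pick $\delta'>0$ with $2\delta'\sqrt d\le\delta$ and $\phi(r)\ge\tfrac12$ for $r\in[0,2\delta'\sqrt d]$, and tile $\Lambda$ by the finitely many $\delta'$-cubes $\{B'_k\}$. By Poisson concentration, with probability $1-o(1)$ as $\lambda\to\infty$ every $B'_k$ contains at least $N=N(\lambda)\to\infty$ points. Conditional on the point counts, the induced subgraph on each $B'_k$ is an Erd\H os--R\'enyi graph on $\ge N$ vertices with edge probability $\ge\tfrac12$, hence connected with probability $1-o(1)$; and between two adjacent $\delta'$-cubes the number of candidate edges is of order $N^2$ and each exists with probability $\ge\tfrac12$, so at least one edge is present with probability tending to $1$. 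Every such edge has endpoints within distance $2\delta'\sqrt d\le\delta$, hence is retained in $G_\delta$. Chaining across the finite adjacency graph of $\{B'_k\}$, which is connected, shows that with probability tending to $1$ all of $\cP|_\Lambda$ forms a single component of $G_\delta|_\Lambda$, which must then coincide with $\cC_\delta(\Lambda)$.

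\textbf{Main obstacle.} The core difficulty is in part~(i): making the ``good cube'' coupling quantitative enough that the dominating Bernoulli percolation sits in the regime where the surface, separation and volume estimates hold simultaneously. The volume lower bound on $|\cC_{\delta_0}(B_o(t))|$ is the one that goes beyond a pure Peierls argument and, through the Deuschel--Pisztora mechanism, is what forces $\lambda$ to be taken large rather than merely above $\lambda_c$.
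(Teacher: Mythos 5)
Your proposal is correct and follows essentially the same route as the paper: a renormalization onto the lattice of $\delta_0$-cubes, Liggett--Schonmann--Stacey domination by a highly supercritical Bernoulli percolation (the paper phrases this as a locally dependent \emph{bond} percolation on occupied cubes with edges given by the representatives, rather than your site-percolation ``good cube'' field, but the substance is identical), followed by the standard supercritical contour and cluster-density estimates that you spell out and the paper delegates to a citation. For part (ii) the paper likewise uses a union bound over cubes plus Poisson concentration, exactly in the spirit of your argument; your refinement to $\delta'$-cubes on which $\phi\ge\tfrac12$ is in fact slightly more careful about what (C1) actually supplies, since the paper's lower bound $q>0$ on connection probabilities within a fixed, arbitrary $\delta$-cube does not follow from (C1) alone.
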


We postpone the proof of Lemma \ref{lem:comp} to Appendix~\ref{appendix_proof}.

We define for $u \in \R^d$, and $\alpha, t>0$,
   \begin{equation}\label{def-akp0}
    \cA_{\kappa}(u, \alpha,t) =\{  \exists x, y \in \cP: |x-u|_{\infty} \leq 2t, x \sim y, |x-y|_{\infty} \geq \alpha t \}.
  \end{equation}
Note that the probability of $\cA_{\kappa}(u,\alpha,t)$ does not depend on the position of $u$, so we can define
   \begin{equation} \label{def-akp}
  \kappa(\alpha,t)= \Prob(\cA_{\kappa}(u,\alpha,t)).
  \end{equation}
It follows from the condition  (C2) that 
    \begin{align} \label{pb-akp}
    \kappa(\alpha, t)= \cO(1) \times \int \limits_{[-2t,2t]^d} dx  \int \limits_{y: |y-x|_{\infty} \geq \alpha t} \frac{ dy}{|x-y|^{5d+\epsilon_0}} =\cO(t^{-(3d+\epsilon_0)}),
    \end{align}
with $\epsilon_0$ as in that condition. Here $\cO$ is the big O notation with a constant not depend on $t$.

From now on, we fix $\delta = \delta_0$ as in Lemma \ref{lem:comp} and omit $\delta$ in the notation of $\nu$ and $\beta$. Define for $J \subset [d]$, with $[d] = \{1,\ldots,d\}$, $x \in \R^d, t>s>0$,
\begin{align}
\cA_{\theta_J}(x,s,t) = \Big \{  \exists \, \gamma = (x_i)_{i=0}^l \subset \cP:  x_i \sim x_{i+1}, x_i \in B^J_x(t) \setminus \cC_{\delta}(B^J_x(t)),  \notag \\   
  (i=0,\ldots, l-1), x_0 \in B^J_x(s), \,  x_l \in B^J_x(2t)  \setminus B^J_x(t)   \Big \}, \label{ajst}
\end{align}
where 
  \begin{equation} \label{bxjr}
  B^J_x(r) = x + \prod_{j \in J} [-r,r] \times \prod_{j \in [d] \setminus J} [0,2r].
  \end{equation}
Here we also omit $\delta$ in the notation. Notice that by the translation invariance and the rotation invariance, $\Prob(\cA_{\theta_J}(x,s,t) ) {=} \Prob(\cA_{\theta_{J'}}(o,s,t))$ for all $x \in \R^d$, and all $J, J'$ with $|J|=|J'|$. Thus,  we can denote   
 \begin{equation*}
 \theta_j(\tfrac{1}{16},t) = \Prob (\cA_{\theta_J}(x,\tfrac{t}{16},t)),
 \end{equation*}
 for any $J$ with  $|J|=j \leq d$ and $x \in \R^d$. Define also
   \[ \theta(\tfrac{1}{16}, t) = \max_{0\leq j \leq d} \theta_j(\tfrac{1}{16},t). \]
   
\begin{figure}[thb]
\centering
	\includegraphics[width = 12cm]{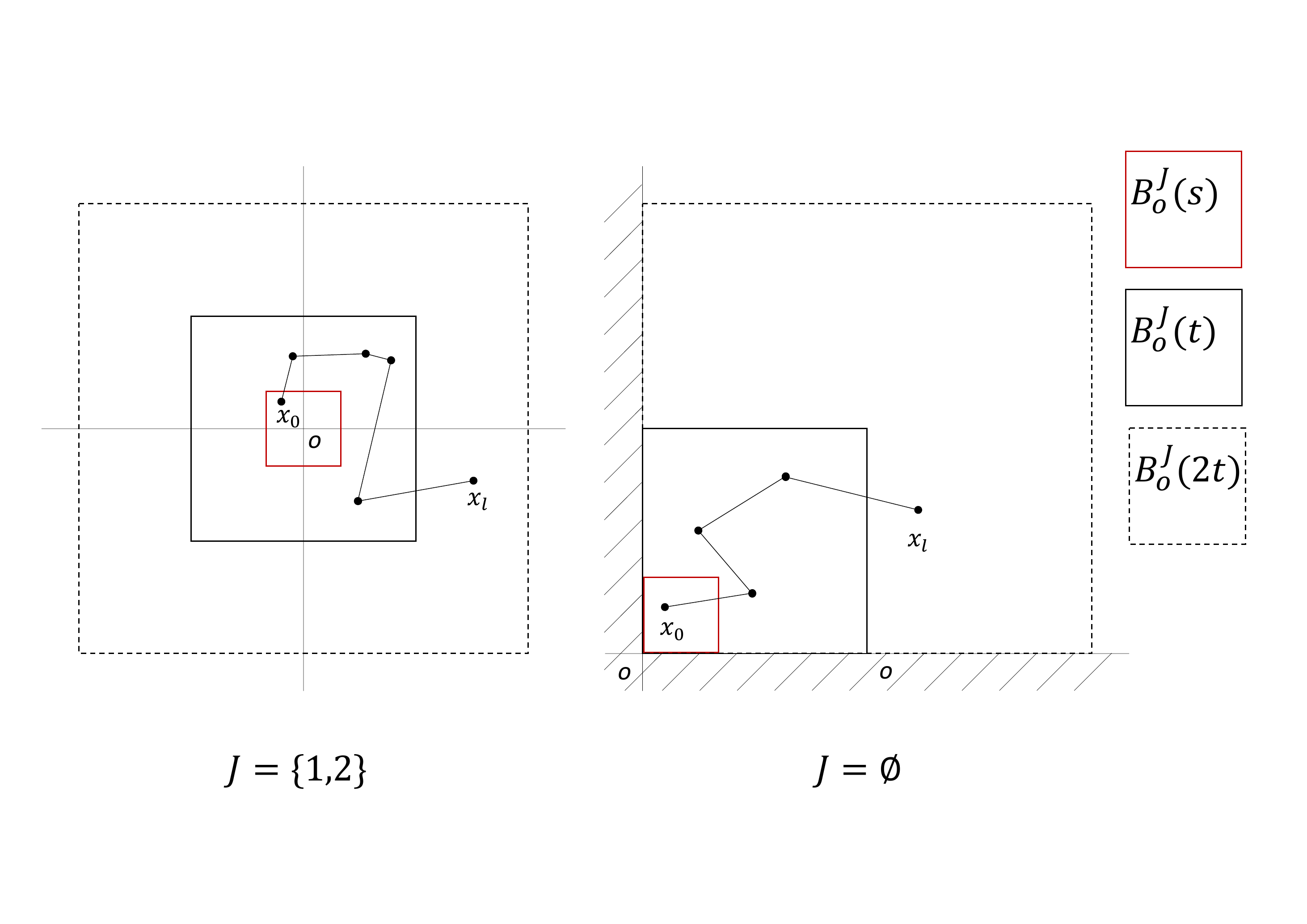}
	
	\vspace{-2cm}
	
	\includegraphics[width = 12cm]{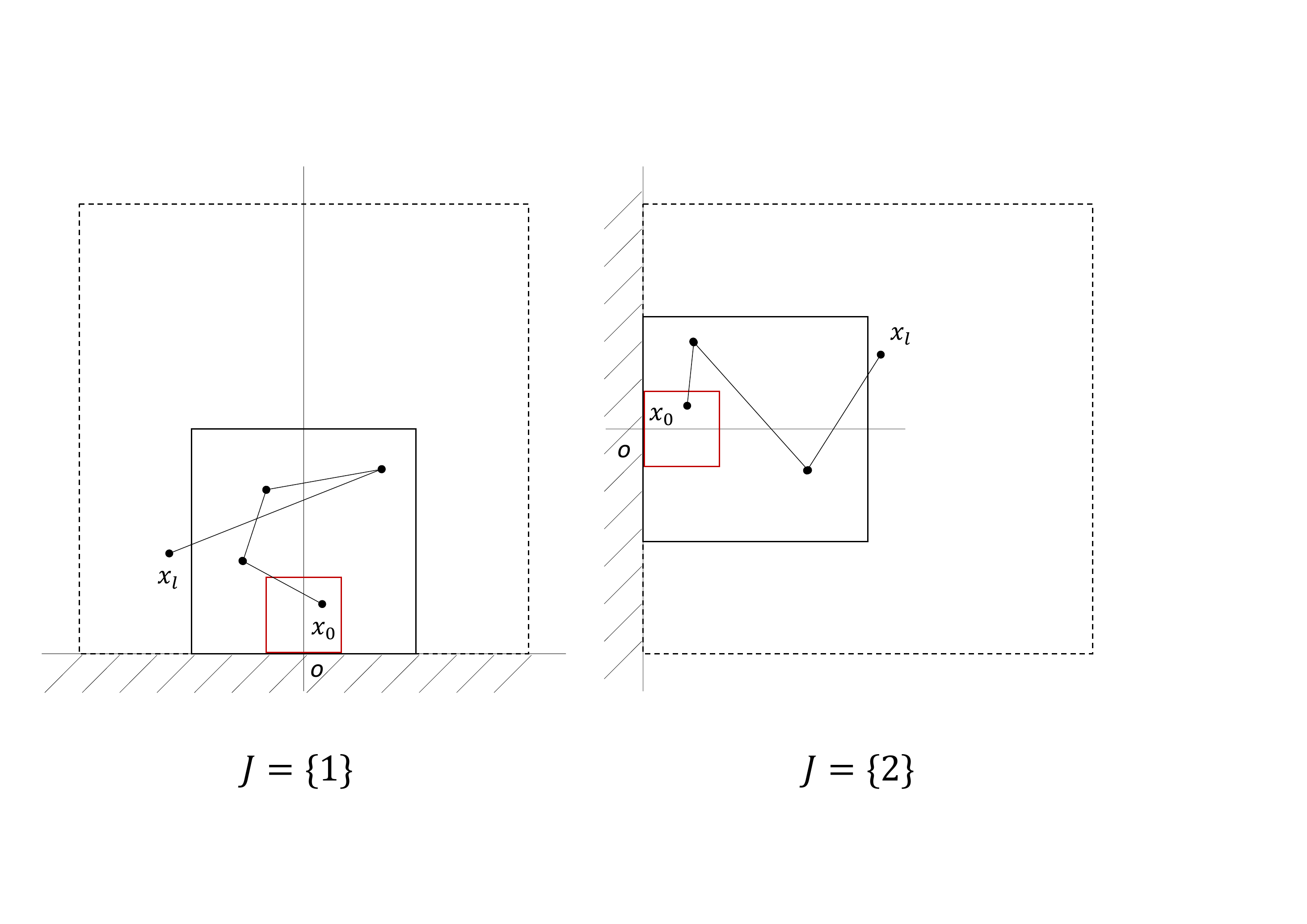}
	\caption{Illustration of events $\cA_{\theta_J}$}
\end{figure}

\begin{proposition} \label{prop:theta}
 	Suppose that the conditions (C1) and (C2) hold. Then there exist  positive constants $\lambda_1, C_1$, such that when $\lambda > \lambda_1$, for all $j=0,\ldots,d$,
 	  \begin{equation*}
 	  \theta(\tfrac{1}{16},t) \leq C_1 t^{-(3d+\epsilon_0)},
 	  \end{equation*}
 	  with $\epsilon_0$ as in (C2).
 \end{proposition}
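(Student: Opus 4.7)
The plan is to prove the bound by strong induction on $t$, using a single-scale renormalization argument that exploits the long-range tail $\kappa(\alpha, t) = \cO(t^{-(3d+\epsilon_0)})$ supplied by condition (C2) together with the exponentially small failure probabilities from Lemma~\ref{lem:comp}\,(i). The inductive hypothesis is $\theta(\tfrac{1}{16}, t') \le C_1 (t')^{-(3d+\epsilon_0)}$ for all $t'$ in some range $[t_0, t/2]$, and the goal is to upgrade this to $t' = t$. The base case is handled by choosing $C_1$ and $t_0$ large enough so that the trivial bound $\theta \le 1$ suffices for $t' \le t_0$.

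I would fix a small parameter $\alpha \in (0, 1)$ and decompose
\[
\cA_{\theta_J}(x, \tfrac{t}{16}, t) \subset \cE_{\text{long}} \cup (\cE_{\text{short}} \cap \cE_{\text{nest}}) \cup \cE_{\text{nest}}^c,
\]
where $\cE_{\text{long}}$ is the event that the witnessing path contains an edge of $\ell^{\infty}$-length $\ge \alpha t$; $\cE_{\text{nest}}$ is the event that the biggest clusters $\cC_{\delta}(B)$ of a fixed finite tiling of $B^J_x(2t)$ by sub-boxes $B$ of side $t/2$ are all contained in $\cC_{\delta}(B^J_x(t))$; and $\cE_{\text{short}} = \cE_{\text{long}}^c$. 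The first piece is bounded directly by $\kappa$: $\cE_{\text{long}} \subset \cA_{\kappa}(x, \alpha, t)$ gives $\Prob(\cE_{\text{long}}) \le C_{\alpha}\, t^{-(3d+\epsilon_0)}$ by \eqref{pb-akp}. The nesting failure $\cE_{\text{nest}}^c$ is controlled by applying Lemma~\ref{lem:comp}\,(i) to each sub-box (contributing a factor $\nu(t/2, t)$) together with $\beta(t)$ for the enclosing box, yielding $\Prob(\cE_{\text{nest}}^c) \le C\exp(-c_0 t/4)$, which is super-polynomially small.

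The heart of the proof is controlling $\cE_{\text{short}} \cap \cE_{\text{nest}}$. On this event the path traverses distance $\ge 15t/16$ using hops of length $< \alpha t$, so for $\alpha$ chosen sufficiently small it must enter and leave at least two geometrically separated half-scale sub-boxes $B_1, B_2$ located in the annular shell $B^J_x(t) \setminus B^J_x(t/2)$. Because $\cE_{\text{nest}}$ forces $\cC_{\delta}(B_i) \subset \cC_{\delta}(B^J_x(t))$ for $i=1,2$ and the path avoids the latter, each restricted sub-path realizes a rescaled crossing event of the form $\cA_{\theta_{J'}}(y_i, t/32, t/2)$, which by translation and rotation invariance has probability $\theta(\tfrac{1}{16}, t/2)$. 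The independence of the Poisson process on disjoint regions then yields, after summing over the $\cO(1)$ admissible positions and orientations of the pair $(B_1, B_2)$,
\[
\Prob(\cE_{\text{short}} \cap \cE_{\text{nest}}) \le C\, \theta(\tfrac{1}{16}, t/2)^2 \le C\, C_1^2\, (t/2)^{-2(3d+\epsilon_0)},
\]
which is $o(t^{-(3d+\epsilon_0)})$ and hence absorbable. Adding the three contributions and tuning $C_1$ and $t_0$ closes the induction.

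The main obstacle is the third step: one must carefully argue that on $\cE_{\text{short}} \cap \cE_{\text{nest}}$ the path genuinely realizes \emph{two} essentially independent half-scale crossings, and that the global constraint \emph{``avoids $\cC_{\delta}(B^J_x(t))$''} translates into the local constraint \emph{``avoids $\cC_{\delta}(B_i)$''}. The latter implication is what the nesting event $\cE_{\text{nest}}$ is engineered to furnish; the former relies on the short-edge restriction and is where the precise geometric slack between the inner scale $t/16$ and outer scale $t$ is used, so that two disjoint shells lie between them. A secondary bookkeeping point is that the sub-event orientation $J'$ at scale $t/2$ need not agree with the original $J$, which is precisely why the statement of the proposition phrases the bound in terms of $\theta(\tfrac{1}{16}, t) = \max_{0 \le j \le d} \theta_j(\tfrac{1}{16}, t)$.
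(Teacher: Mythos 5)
Your overall architecture -- decompose the crossing event into a long-edge piece controlled by $\kappa$, a nesting-failure piece controlled by $\nu$ and $\beta$, and two geometrically separated half-scale crossings whose probabilities multiply, then bootstrap the resulting recursion -- is exactly the paper's strategy (Lemma~\ref{lem:renorm} followed by Lemma~\ref{lem:abt}). However, there is a genuine gap in how you initialize the induction. A quadratic recursion of the form $a_t \le b_t + K(a_{t/2})^2$ with $b_t \asymp t^{-(3d+\epsilon_0)}$ only propagates smallness; it cannot create it. Your base case via ``the trivial bound $\theta\le 1$ suffices for $t'\le t_0$ after choosing $C_1$ large'' forces $C_1 \ge t_0^{3d+\epsilon_0}$, but then at the first inductive scale $t\approx 2t_0$ the quadratic term is $C\,C_1^2\,(t/2)^{-2(3d+\epsilon_0)} \approx C\,C_1\cdot 2^{3d+\epsilon_0}\, t^{-(3d+\epsilon_0)}$, which is \emph{larger} than the target $C_1 t^{-(3d+\epsilon_0)}$, not absorbable: squaring a quantity of order $1$ gains nothing. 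This is precisely why the proposition requires $\lambda>\lambda_1$ and not merely $\lambda>\lambda_0$: the paper supplies the missing smallness at the base scales from Lemma~\ref{lem:comp}(ii), which gives $\theta(\tfrac1{16},t)\le \Prob(|G(\cP|_{[-t,t]^d})|\neq|\cC_\delta([-t,t]^d)|)\to 0$ as $\lambda\to\infty$, hence $a_t\le 2b_t$ on the initial window $[t_0,t_0/\epsilon_1]$ for $\lambda$ large (condition~\eqref{lbat}). Your proof never uses largeness of $\lambda$ beyond $\lambda_0$, so it cannot be correct as stated.

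A secondary but real problem is your choice of sub-scale $t/2$. The event $\cA_{\theta_{J'}}(y,\tfrac{t'}{16},t')$ is measurable with respect to the configuration in $B^{J'}_y(2t')$, a box of side about $4t'$. With $t'=t/2$ and both centers inside $B^J_x(t)$ (diameter $\sim 2t$), the two determining regions necessarily overlap, so the two extracted crossings are \emph{not} independent and the product bound fails. The paper avoids this by extracting crossings at the much smaller scales $t/2^3$ and $t/2^7$ from two shells separated by distance of order $t$, so that $\cB_1$ and $\cB_2$ are supported on disjoint regions ($[0,\tfrac{3t}{4}]^2$ versus $[0,\tfrac{7t}{4}]^2\setminus[0,\tfrac{5t}{4}]^2$ in the planar case). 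Both issues are repairable, but they are exactly the two quantitative points the renormalization hinges on.
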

The proof of Proposition~\ref{prop:theta} is given in Sub-section~\ref{ssec:tht}.

\begin{remark} It would be more natural if we can replace $\cC_{\delta}(B^J_x(t))$ by the biggest cluster $\cC(B^J_x(t))$ in the definition of $\cA_{\theta_J}(x,s,t)$ in  \eqref{ajst}. However, the proof of Proposition~\ref{prop:theta} requires some prior estimates as in Lemma \ref{lem:comp}, which are currently not available. More precisely, Proposition~\ref{prop:theta} still holds if we substitute the family of connected components  $\{\cC_{\delta}(B_x(t))\}_{t\geq 1}$ by any other family $\{\cC'(B_x(t))\}_{t\geq 1}$ satisfying 
	 \begin{align*}
	\beta'(t) &= \Prob(\cC'(B_o(t)) \not \subset \cC'(\R^d)) \leq \exp(-c(\log t)^2),\\
	\nu'(s, t) &=  \sup_{y: |y|_{\infty} \leq t-s } \Prob(\cC'(B_y(s)) \not \subset \cC'(B_o(t)) ) \leq \exp(-c(\log s)^2),
	\end{align*}
with $c$ a positive constant, for all  $t, s$ large real numbers such that $ \tfrac{t}{2}\geq s \geq \sqrt{t}$, and satisfying the inequality \eqref{lbat} (which is a consequence of Lemma \ref{lem:comp} (ii) when considering $\{\cC_{\delta}(B_o(t))\}_{t\geq 1}$). We could not prove directly these properties for the family of biggest clusters  $\{\cC(B_x(t))\}_{t\geq 1}$. Instead, we show in Lemma \ref{lem:comp} that the properties hold for  $\{\cC_{\delta}(B_x(t))\}_{t\geq 1}$ by using the comparison with percolation.
\end{remark}

 \begin{corollary}  \label{corr:cC}
 	Suppose that the conditions (C1) and (C2) hold and  $\lambda > \lambda_1$ with $\lambda_1$ as in Proposition \ref{prop:theta}. Then there exist $C, \epsilon_1>0$, such that 
 	\begin{align} \label{seclu}
 	&\Prob \left( \exists \, \cC \textrm{ a connected component of } G(\cP)|_{B_o(t)}:   \cC \cap \cC_{\delta}(B_o(t)) = \varnothing, |\cC| \geq t^{d(1-\epsilon_1)}  \right) \notag \\
 	& \leq  Ct^{-(3d+\epsilon_1)},
 	\end{align}
 and 
    \begin{equation} \label{cdcc}
    \Prob \left( \cC_{\delta}(B_o(t)) \subset \cC(B_o(t)) \subset \cC(\R^d)   \right) \geq 1- Ct^{-(3d+\epsilon_1)}.
    \end{equation}	
 \end{corollary}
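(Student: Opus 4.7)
The plan is to prove \eqref{seclu} first, from which \eqref{cdcc} follows by combining with Lemma~\ref{lem:comp}(i). For \eqref{seclu} the idea is to dichotomize on the infinity-diameter of the putative ``bad'' cluster.

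Fix $\epsilon_1 \in (0, \epsilon_0/6)$ and set $r := t^{1-\epsilon_1/d}$. Suppose a connected component $\cC$ of $G(\cP)|_{B_o(t)}$ satisfies $\cC \cap \cC_\delta(B_o(t)) = \varnothing$ and $|\cC| \ge t^{d(1-\epsilon_1)}$. If $\mathrm{diam}_\infty(\cC) \le r$, then $\cC \subset B_y(r)$ for some $y$, so $|\cP \cap B_y(r)| \ge t^{d(1-\epsilon_1)}$. Because the expected number of Poisson points in $B_y(r)$ is at most $C t^{d-\epsilon_1} \ll t^{d(1-\epsilon_1)}$, a Chernoff bound combined with a union bound over an $O(t^{\epsilon_1})$-sized grid of centers covering $B_o(t)$ controls this ``concentrated'' case by $\exp(-c t^{d(1-\epsilon_1)})$, which is negligible compared to $t^{-(3d+\epsilon_1)}$.

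Otherwise $\mathrm{diam}_\infty(\cC) > r$, so $\cC$ contains a path $\pi$ between two vertices $y_1, y_2$ with $|y_1-y_2|_\infty > r$, and $\pi \subset B_o(t) \setminus \cC_\delta(B_o(t))$. One can cover $B_o(t)$ by $O(t^{\epsilon_1})$ boxes of the form $B^J_x(r)$ (varying $J \subset [d]$ to handle whichever coordinate direction the path escapes from its entry point), so that at least one cover element witnesses a crossing in the sense of \eqref{ajst}: $\pi$ starts in $B^J_x(r/16)$, the interior vertices lie in $B^J_x(r)$, and the final vertex lies in $B^J_x(2r) \setminus B^J_x(r)$. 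The main obstacle I anticipate is to translate ``$\pi$ avoids $\cC_\delta(B_o(t))$'' into ``$\pi$ avoids $\cC_\delta(B^J_x(r))$'' as the definition of $\cA_{\theta_J}$ requires, since $\cC_\delta$ is not obviously monotone in its region argument. I would handle this by restricting to the high-probability event that $\cC_\delta(B^J_x(r)) \subset \cC_\delta(B_o(t))$ for every box in the cover. Such inclusion holds once all these $\delta$-skeletons land in the unique infinite cluster $\cC_\delta(\R^d)$ and wire together within $B_o(t)$, which by Lemma~\ref{lem:comp}(i) fails with probability at most $O(t^{\epsilon_1}) \exp(-c_0 r)$, negligible. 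On this event, the existence of $\pi$ forces $\cA_{\theta_J}(x, r/16, r)$ for some element of the cover; Proposition~\ref{prop:theta} and a union bound then yield probability $O(t^{\epsilon_1} r^{-(3d+\epsilon_0)})$, which is at most $C t^{-(3d+\epsilon_1)}$ for $\epsilon_1$ small, as the inequality $\epsilon_1 \cdot (2 + (3d+\epsilon_0)/d) \le \epsilon_0$ shows.

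For \eqref{cdcc}, I would intersect \eqref{seclu} with the exponentially likely events from Lemma~\ref{lem:comp}(i) that $|\cC_\delta(B_o(t))| \ge c_0 t^d$ and $\cC_\delta(B_o(t)) \subset \cC_\delta(\R^d) \subset \cC(\R^d)$. On this intersection, $\cC_\delta(B_o(t))$ has size $\gg t^{d(1-\epsilon_1)}$, strictly larger than every component of $G(\cP)|_{B_o(t)}$ disjoint from it, so $\cC(B_o(t))$ is exactly the component of $G(\cP)|_{B_o(t)}$ containing $\cC_\delta(B_o(t))$, and the inclusions $\cC_\delta(B_o(t)) \subset \cC(B_o(t)) \subset \cC(\R^d)$ follow immediately.
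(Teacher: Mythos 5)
Your overall strategy is exactly the paper's: split on the $\ell^\infty$-diameter of the bad component, kill the small-diameter case by Poisson concentration, and in the large-diameter case cover $B_o(t)$ by mesoscopic boxes, restrict to the event that each box's cluster $\cC_\delta(B^J_x(r))$ wires into $\cC_\delta(B_o(t))$ (controlled by $\nu$ from Lemma~\ref{lem:comp}(i)), and extract a realization of $\cA_{\theta_J}$ bounded via Proposition~\ref{prop:theta}; the derivation of \eqref{cdcc} from \eqref{seclu} is also the paper's. However, there is a genuine error in your concentrated case: with $r := t^{1-\epsilon_1/d}$ the mean number of Poisson points in $B_y(r)$ is of order $r^d = t^{d-\epsilon_1}$, while your threshold is $t^{d(1-\epsilon_1)} = t^{d-d\epsilon_1}$. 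For $d\ge 2$ one has $t^{d-\epsilon_1} \gg t^{d-d\epsilon_1}$ (and equality for $d=1$), so the threshold sits \emph{below} the mean and no Chernoff bound is available — the event $|\cP\cap B_y(r)|\ge t^{d(1-\epsilon_1)}$ has probability close to $1$, not $\exp(-ct^{d(1-\epsilon_1)})$. The inequality you assert, $Ct^{d-\epsilon_1}\ll t^{d(1-\epsilon_1)}$, is backwards.

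The fix is to decouple the two scales as the paper does: take the diameter cutoff $r = t^{1-2\epsilon_1}$ (or any $t^{1-a}$ with $a>\epsilon_1$), so that $r^d = t^{d-2d\epsilon_1} \ll t^{d-d\epsilon_1}$ and the large-deviation bound applies. The number of covering boxes then becomes $O(t^{2d\epsilon_1})$ rather than $O(t^{\epsilon_1})$, and the final union bound in the spread-out case reads $O\bigl(t^{2d\epsilon_1}\, t^{-(3d+\epsilon_0)(1-2\epsilon_1)}\bigr)$, which is still $\le Ct^{-(3d+\epsilon_1)}$ for $\epsilon_1$ small; your constraint on $\epsilon_1$ must be adjusted accordingly. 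With that repair the argument is sound and coincides with the paper's proof.
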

\begin{proof}[Proof of Corollary \ref{corr:cC}]
Let $\epsilon$ be a small positive constant. We first observe that 
  \begin{equation*} 
  \Prob\left( \exists \, x \in \cP \cap B_o(t):  |B_x(t^{1-2 \epsilon}) \cap \cP| \geq t^{d(1-\epsilon)} \right) \leq \exp(-ct^{d(1-2 \epsilon)}),
  \end{equation*}
 for some $c>0$. Moreover, for any connected set $\cC$, if $d_{\infty}(\cC) = \max_{x, y \in \cC} |x-y|_{\infty} \leq t^{1-2 \epsilon}$ then  $\cC \subset B_z(t^{1-2\epsilon})$ for any $z \in \cC$. Thus 
   \begin{align} \label{ld:num}
   &\Prob \left( \exists \, \cC \textrm{ a connected component of } G(\cP)|_{B_o(t)}:    |\cC| \geq t^{d(1-\epsilon_1)}, \, d_{\infty}(\cC)   \leq t^{1-2 \epsilon} \right) \notag \\
   & \leq \exp(-ct^{d(1-2 \epsilon)}).
   \end{align}
  Now suppose that $\cC \subset B_o(t)$ is a connected set satisfying $d_{\infty}(\cC) > t^{1-2\epsilon}$ and $\cC \cap \cC_{\delta}(B_o(t)) =\varnothing$. Then there exists $\gamma =(x_i)_{i=0}^l \subset B_o(t) \setminus \cC_{\delta}(B_o(t))$ such that
$x_i \sim x_{i+1}$ for all $i=0,\ldots,l-1$ and $|x_0-x_l|_{\infty} > t^{1-2 \epsilon}$. 

We divide the cube $B_o(t)$ into cubes of size $t^{1-2 \epsilon}/16$, and call the center of these cubes by $(y_i)_{i=1}^L$ with $L \asymp t^{2d \epsilon}$.  Then there exists an index $i_0$ such that $x_0 \in B_{y_{i_0}}(t^{1-2 \epsilon}/16)$ and thus if $\cC_{\delta}(B_{y_{i_0}}(t^{1-2\epsilon})) \subset \cC_{\delta}(B_o(t))$ then $\gamma =(x_i)_{i=0}^l$ is a realization of $\cA_{\theta_{[d]}}(y_{i_0}, \tfrac{1}{16}, t^{1-2\epsilon})$. In the other words,
  \begin{align*}
  &&\{ \exists \, \cC \textrm{ a connected component of } G(\cP)|_{B_o(t)}: d_{\infty} (\cC) >t^{1-2 \epsilon}  \} \cap \cE_1 \subset \cE_2,
  \end{align*}
where
  \[ \cE_1 = \cap_{i=1}^L \{ \cC_{\delta}(B_{y_{i}}(t^{1-2\epsilon})) \subset \cC_{\delta}(B_o(t)) \}, \quad \cE_2 = \cup_{i=1}^L \cA_{\theta_{[d]}}(y_{i}, \tfrac{1}{16}, t^{1-2\epsilon}). \]
Thus
  \begin{align*}
  &\Prob \left( \exists \, \cC \textrm{ a connected component of } G(\cP)|_{B_o(t)}: d_{\infty} (\cC) >t^{1-2 \epsilon} \right) \\
   &\leq  \Prob(\cE_2) + \Prob(\cE_1^c)   \leq L \left( \nu (t^{1-2\epsilon}, t) + \theta_d(\tfrac{1}{16}, t^{1-2 \epsilon}) \right) \\
   &\leq C t^{-(3d + \epsilon_0) (1-2 \epsilon) + 2d \epsilon} \leq C t^{-(3d + \epsilon_0/2)},
  \end{align*} 
  for $\epsilon$ small enough. Combining this with \eqref{ld:num} we obtain \eqref{seclu}. 
  
We turn to prove \eqref{cdcc}.  By \eqref{seclu}, with probability $1-\cO(t^{-3d+\epsilon_1})$, all the connected components in $B_o(t)$ that are not intersected with $\cC_{\delta}(B_0(t))$ have size smaller than $t^{d(1-\epsilon_1)}$. On the other hand, by Lemma \ref{lem:comp} (i),   $|\cC_{\delta}(B_o(t))| \geq c t^d$ and $\cC_{\delta}(B_o(t)) \subset \cC_{\delta}(\R^d) \subset \cC(\R^d)$ with probability $1-\exp(-ct)$ for some $c>0$. Altogether gives the proof of \eqref{cdcc}. 
  \end{proof}

 \subsection{Weak stabilization and moment condtion} 
 \label{ssec:1-2}

\subsubsection{Proof of the weak stabilization \eqref{delta-infinity} } \label{sss:d-i}

Let $\{W_n\}$ be an increasing sequence of cubes tending to $\R^d$. Let $A$ be the event that the vertex $\zero$ is connected to the infinite cluster $\cC(\R^d)$. Recall the expression of $\Delta_n =  \Delta(W_n)$ in three different cases in the proof of Theorem~\ref{thm:size}. Recall also the definition of the limit add-one cost 
\[
	\Delta = \I(A) \Big(1 + \# \{x \in \cP : x \leftrightarrow \zero , x \not \in \cC(\R^d)\} \Big).
\]

Define 
\[
	\Delta_n' = \I(A) \Big(1 + \# \{x \in \cP|_{W_n} : x \overset{W_n}\leftrightarrow \zero , x \not \in \cC(\R^d)\} \Big).
\]
It is clear that with probability one,  $\Delta_n' \to \Delta$, and thus, $\Delta_n' \Pto \Delta$ as $n \to \infty$. Therefore, for the weak stabilization, it suffices to show that 
\begin{equation}
	\Delta_n - \Delta_n' \Pto 0 \quad \text{as} \quad n \to \infty.
\end{equation}

Denote by $A_n$ the event that 
\[
	A_n = \{\cC(W_n) \subset \cC(\R^d)\}.
\]
By Corollary~\ref{corr:cC}, $\Prob(A_n) \to 1$ as $n \to \infty$. On the event $A$, denote by $x_1, \dots, x_k$  the points in the infinite cluster $\cC(\R^d)$ directly connected to $\zero$. Let $B_n$ be the event that 
\[
	B_n = A \cap \{x_i \in \cC(W_n), i = 1, \dots, k\} \subset A_n.
\] 
Then we claim that 
\begin{equation}\label{Bn}
	\Prob(B_n) \to \Prob(A) \quad \text{as} \quad n \to \infty.
\end{equation}
Indeed, for $m > n$, note that  
\[
	A \cap A_n  \cap \{\cC(W_n) \subset \cC(W_m) \} \cap \{x_i \overset{W_m}\leftrightarrow \cC(W_n), i = 1, \dots, k\} \subset B_m,
\]
which implies 
\begin{align*}
	\Prob(B_m) &\ge \Prob(A) - \Prob(A_n^c) - \Prob(\cC(W_n) \not \subset \cC(W_m)) \\
	&\quad - \Big(1 -  \Prob(x_i \overset{W_m}\leftrightarrow \cC(W_n),  i = 1, \dots, k)\Big).
\end{align*}
For fixed $n$, it is clear that the second term and the fourth term go to zero as $m \to \infty$. The third term also goes to zero as $n, m \to \infty$ by taking into account of the equation~\eqref{cdcc} and Lemma~\ref{lem:comp}(i). Taking the liminf in the above equation for fixed $n$, then letting $n$ tend to infinity, we get that 
\[
	\Prob(A) \ge \liminf_{m \to \infty } \Prob(B_m) \ge \Prob(A),
\]
proving the claim.

It follows from the definition of  the event $B_n$ that on $B_n$,
\begin{align*}
	 \Delta_n &= 1 + \# \{x \in \cP|_{W_n} : x \overset{W_n} \leftrightarrow \zero, x \not \in  \cC(W_n)\} \\
	 &= 1 + \# \{x \in \cP_{W_n} : x \leftrightarrow \zero, x \not \in \cC(\R^d)\} = \Delta_n'.
\end{align*}
Now let us write 
\begin{align*}
	\Delta_n - \Delta_n' &= \Delta_n \I(B_n) - \Delta_n' + \Delta_n  (\I(A ) - \I(B_n)  ) + \Delta_n \I(A^c) \\
	&= \Delta_n' (\I(B_n) - \I(A)) +  \Delta_n  (\I(A ) - \I(B_n)  ) + \Delta_n \I(A^c).
\end{align*}
The first and the second terms converge in probability to zero by the claim~\eqref{Bn}. It remains to show that the third term converges to zero in probability. But it is an easy consequence of the fact that on the event $A^c$, the finite component containing the vertex $\zero$ is of course smaller than the biggest component when $n$ is large enough. And thus, when $n$ is large enough, $\Delta_n = 0$ (Case 3 in the expression of $\Delta_n$). The proof of the weak stabilization is complete.

\subsubsection{Proof of the moment estimate \eqref{edt3}}  
Let $W \ni \zero$ be a cube. Recall the following upper bound for the add-one cost
\begin{align*}
	0 \le \Delta(W) &\le 1 + \#\{x \in \cP|_W : x \overset{W}\leftrightarrow \zero, x \not \in \cC(W) \} \\
	&=1 + \sum_{x \in \cP|_W } \I (x \overset{W}\leftrightarrow \zero, x \not \in \cC(W)). 
\end{align*}
Thus 
\begin{align}
	\Delta(W)^3 &\le 4 + 4 \bigg(  \sum_{x \in \cP|_W } \I (x \overset{W}\leftrightarrow \zero, x \not \in \cC(W)) \bigg)^3 \notag\\
	&= 4 + 24 \sum_{\{x, y, z\} \subset \cP|_W} \I (x\overset{W}\leftrightarrow \zero, y\overset{W}\leftrightarrow \zero, z\overset{W}\leftrightarrow \zero, x, y, z \not \in \cC(W))  \notag\\
	&+ 24 \sum_{\{x, y\} \subset \cP|_W}  \I (x\overset{W}\leftrightarrow \zero, y\overset{W}\leftrightarrow \zero,  x, y \not \in \cC(W)) + 4 \sum_{x \in \cP|_W} \I (x \overset{W}\leftrightarrow \zero, x \not \in \cC(W)) . 	\label{Delta3}
\end{align}
Here the first and the second sums are taken over subsets of three elements, and two elements of $\cP|_{W}$, respectively.

For $x \in \R^d$ and a finite subset $\cX \subset \R^d$ with  $\zero \not \in \cX$, let $G(\cX)$ be a random graph generated by connecting any two points $(y, z)$ of $\cX$ with probability $\varphi(y - z)$, and $G(\cX \cup \{x\})$, when $x \not \in \cX$, be the random graph obtained from $G(\cX)$ by adding the vertex $x$ and new edges connected to $x$ (independently with probability $\varphi(x - y), y \in \cX$). Similarly, let $G(\cX \cup \{x, \zero\})$ be the random graph obtained from $G(\cX \cup \{x\})$ by adding the vertex $\zero$ and new edges from $\zero$. Denote by $\cC(G)$ the biggest connected component of a graph $G$. Define $\xi(x; \cX)$ to be the probability that $x$ and $\zero$ are in the random graph  $G(\cX \cup \{x, \zero\})$  and $x$ does not belong to the biggest component of $G(\cX \cup \{x\})$,
\[
	\xi(x; \cX) = \Prob_2(x \leftrightarrow \zero, x \not \in \cC(G(\cX \cup \{x\}))).
\] 
Here we may consider $\Prob = \Prob_1 \otimes \Prob_2$, where $\Prob_1$ and $\Prob_2$ are the probability measures for the Poisson process and for connecting edges, respectively.
With those notations, by the Mecke formula, the expectation of the last sum in the estimate~\eqref{Delta3} can be written as 
\begin{align}
	\Ex \bigg[ \sum_{x \in \cP|_W} \I (x \overset{W}\leftrightarrow \zero, x \not \in \cC(W)) \bigg] &= \Ex_1 \bigg[  \sum_{x \in \cP|_W} \xi(x; \cP|_W)  \bigg] \notag \\
	&=
	 \lambda \int_W  \Ex_1 [\xi(x; \cP|_W)] dx.\label{sum3}
\end{align}

Note that $\Ex_1 [\xi(x; \cP|_W)]$ is nothing but the probability of the event that $x$ is connected to $\zero$ in $G(\cP|_W \cup \{x, \zero\})$ and $x$ does not belong to the biggest component of $\cC(G(\cP|_W \cup \{x\}) )$. The latter condition implies that $\cC(G(\cP|_W \cup \{x\})) = \cC(G(\cP|_W)) = \cC(W)$, because $G(\cP|_W)$ is a subgraph of $G(\cP|_W \cup \{x\})$. 
Therefore, we deduce that 
\begin{equation}\label{xi1}
	\Ex_1 [\xi(x; \cP|_W)] \le    \Prob(\cA_x),
\end{equation}
where
\begin{align*}
   \cA_{x} = \{ \exists (x_i)_{i = 0}^l  \in \cP|_W \cup \{x, \zero\} \setminus \cC(W) : x_i \sim x_{i + 1}, i = 0, \dots, l - 1, (x_0 = \zero, x_{l} = x) \}.
\end{align*}

\begin{lemma}\label{prop:expdecay}
 Assume that $\lambda > \lambda_1$ with $\lambda_1$ as in Proposition \ref{prop:theta}. Then there exist positive constants $C_2, \epsilon_2$, such that
 \begin{equation}\label{Ax}
 \Prob(  \cA_{x}) \leq C_2 |x|_{\infty}^{-(3d+\epsilon_2)}.
 \end{equation}
\end{lemma}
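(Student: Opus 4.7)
The plan is to decompose $\cA_x$ at the scale $r := |x|_\infty/4$ near $\zero$ into events already controlled in Sub-section~\ref{ssec:renor}, namely the long-edge event bounded by \eqref{pb-akp} and the theta event bounded by Proposition~\ref{prop:theta}, plus one residual event handled by Corollary~\ref{corr:cC}. Setting $t = |x|_\infty$ and $r = t/4$, the cube $W$ has side-length at least $t$ since it contains both $\zero$ and $x$, so (up to reflections which do not affect probabilities) I can choose a subset $J \subset [d]$ such that $B^J_\zero(2r) \subset W$. On $\cA_x$ there is a path $\gamma = (x_0 = \zero, x_1, \ldots, x_l = x)$ with $x_i \sim x_{i+1}$ and all $x_i \in (\cP|_W \cup \{\zero, x\}) \setminus \cC(W)$; since $|x|_\infty > r$, there is a smallest index $k^\star$ at which $\gamma$ leaves $B^J_\zero(r)$.

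I would then split $\cA_x$ into three scenarios. First, if the exit edge $\{x_{k^\star-1}, x_{k^\star}\}$ has infinity-norm length at least $r$, both of its endpoints lie in $B_\zero(2r)$, so we are on $\cA_\kappa(\zero, 1/2, r)$, with probability $\cO(r^{-(3d+\epsilon_0)})$ by \eqref{pb-akp}. Second, if the exit edge is shorter than $r$ and $\cC_\delta(B^J_\zero(r)) \subset \cC(W)$, then $x_{k^\star} \in B^J_\zero(2r) \setminus B^J_\zero(r)$, the initial sub-path $(x_0, \ldots, x_{k^\star})$ lies in $B^J_\zero(2r)$, starts at $\zero \in B^J_\zero(r/16)$, and avoids $\cC_\delta(B^J_\zero(r))$ because $\gamma$ avoids $\cC(W) \supset \cC_\delta(B^J_\zero(r))$; this places us on $\cA_{\theta_J}(\zero, r/16, r)$, with probability $\cO(r^{-(3d+\epsilon_0)})$ by Proposition~\ref{prop:theta}. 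Third, the residual event $\cC_\delta(B^J_\zero(r)) \not\subset \cC(W)$: since $B^J_\zero(r) \subset W$, the set $\cC_\delta(B^J_\zero(r))$ is a connected subset of $G(\cP)|_W$ and is thus contained in some component; for this component to differ from $\cC(W)$, the biggest cluster of $G(\cP)|_W$ must be disjoint from $\cC_\delta(B^J_\zero(r))$ and have size at least $|\cC_\delta(B^J_\zero(r))| \geq c r^d$ (with this lower bound holding up to an exponentially small correction via Lemma~\ref{lem:comp}(i)). I would bound this by a variant of \eqref{seclu} applied at the scale of the side-length $L \geq t$ of $W$, giving $\cO(L^{-(3d+\epsilon_1)}) \leq \cO(t^{-(3d+\epsilon_1)})$.

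Summing the three contributions yields $\Prob(\cA_x) = \cO(t^{-(3d+\epsilon_2)})$ with $\epsilon_2 = \tfrac{1}{2}\min(\epsilon_0, \epsilon_1)$, which is the claimed bound. The main obstacle is the third scenario: establishing the uniqueness-of-the-giant-component estimate $\Prob(\cC_\delta(B^J_\zero(r)) \not\subset \cC(W)) = \cO(t^{-(3d+\epsilon_1)})$ \emph{uniformly} over cubes $W$ containing both $\zero$ and $x$. The delicate point is the mismatch of scales between the local big cluster $\cC_\delta(B^J_\zero(r))$ of radius $r$ and the window $W$, whose side-length can be much larger than $t$; bridging this requires adapting the renormalization behind \eqref{seclu}--\eqref{cdcc} from centered balls to arbitrary cubes containing $\zero$, and is where most of the technical work lies.
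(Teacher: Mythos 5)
Your overall decomposition (a long-edge event controlled by \eqref{pb-akp}, a $\theta$-type crossing event controlled by Proposition~\ref{prop:theta}, and a residual containment event) is the same skeleton as the paper's proof, but your very first geometric step fails, and it is precisely the step where the paper's proof does its real work. You claim that, up to reflections, there is some $J\subset[d]$ with $B^J_\zero(2r)\subset W$ for $r=|x|_\infty/4$. This is false whenever $\zero$ sits at an \emph{intermediate} distance from a face of $W$. Concretely, in $d=2$ take $t=|x|_\infty$, $W=[-t/4,3t/4]\times[0,t]$ and $x=(0,t)$: in the first coordinate $\zero$ has room $t/4<t/2$ on one side and $3t/4<t$ on the other, so no box of the form $B^J_\zero(t/2)$ (symmetric, one-sided, or reflected in that coordinate) fits inside $W$; a fortiori $\zero$ cannot be placed in the inner sixteenth of such a box, which is what the event $\cA_{\theta_J}$ requires of the starting point of the path. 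The paper's claim \eqref{bjzj} resolves exactly this: the boxes are anchored at the points $u^J$ built from the \emph{corner} of $W$ (not at $\zero$), with $J$-dependent scales $c_Jt$ spread over several dyadic levels, and a case analysis on the position of $\zero$ relative to the corner guarantees that $\zero\in B^J_{u^J}(c_Jt/2^4)$ for the selected $J$ while $B^J_{u^J}(c_Jt)\subset W$. Without this (or an equivalent device) your second scenario cannot be mapped onto $\cA_{\theta_J}$ and Proposition~\ref{prop:theta} cannot be invoked.

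The residual scenario you flag as the main obstacle is actually the easier part, and the paper dispatches it without any new renormalization: it asks that $\cC_{\delta}(B^J_{u^J}(c_J|x|_\infty))\subset\cC_{\delta}(W)\subset\cC(W)$. The first inclusion fails with exponentially small probability by Lemma~\ref{lem:comp}(i) (chaining over intermediate scales if the side of $W$ is much larger than $|x|_\infty$), and the second fails with probability $\cO(h^{-(3d+\epsilon_1)})$ by \eqref{cdcc} applied to $W$ of side $h\ge|x|_\infty$, hence $\cO(|x|_\infty^{-(3d+\epsilon_1)})$. So you do not need to adapt \eqref{seclu} to mismatched scales; you need to route the containment through $\cC_\delta(W)$ rather than comparing $\cC_\delta(B^J_\zero(r))$ directly with $\cC(W)$. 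The genuinely missing ingredient in your write-up is the corner-anchored case analysis behind \eqref{bjzj}.
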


\begin{proof}[Proof of the moment condition~\eqref{edt3}]

The expectation of the third sum in the estimate~\eqref{Delta3} is uniformly bounded by combining equations~\eqref{sum3}--\eqref{Ax}. Let us now show the uniform boundedness of the expectation of the first sum.

For $x, y , z \in W$, and a finite set $\cX \subset W$, we build random graphs in the same way as above  in the order that 
\[
	G(\cX) \to G(\cX \cup \{x\}) \to G(\cX \cup \{x, y \}) \to G(\cX \cup \{x, y , z\}) \to G(\cX \cup \{x, y, z, \zero\}).
\]
Define 
\[
	\xi_3(x, y, z; \cX) = \Prob_2(x \leftrightarrow \zero, y \leftrightarrow \zero, z \leftrightarrow \zero,  \text{and } x, y , z \not \in \cC(G(\cX \cup \{x, y, z\}))).
\]
Then by the multivariate Mecke equation (Theorem~4.4 in \cite{Last-Penrose-book}), the expectation of the first sum is written as
\begin{align*}
	&\Ex \bigg[\sum_{\{x, y, z\} \subset \cP|_W} \I (x\overset{W}\leftrightarrow \zero, y\overset{W}\leftrightarrow \zero, z\overset{W}\leftrightarrow \zero, x, y, z \not \in \cC(W))   \bigg] \\
	&= \frac{\lambda^3}{6} \int_{(W)^3} \Ex_1[\xi_3(x, y, z; \cP|_W)] \, dx dy dz.
\end{align*}
Note that $ \Ex_1[\xi_3(x, y, z; \cP|_W)] $ is the probability of the event $\cA_{x, y, z}$ that $x, y $ and $z$ are connected to $\zero$ in the random graph $G(\cP|_W \cup \{x, y, z, \zero\})$ and $x, y$ and $z$ do not belong to the biggest component of $G(\cP|_W \cup \{x, y, z\})$. The latter condition implies that 
\[
	\cC(G(\cP|_W \cup \{x, y, z\})) = \cC(G(\cP|_W \cup \{x, y\})) = \cC(G(\cP|_W \cup \{x\})) = \cC(W).
\]

Note that $\cA_{x, y, z}$ is not included in $\cA_x \cap \cA_y \cap \cA_z$. However, it holds that 
\[
	\cA_{x, y, z}	\subset  \cA_x' \cap \cA_y' \cap \cA_z',
\]
where $\cA_x', \cA_y'$ and $\cA_z'$ are events obtained by replacing the condition 
\[
	\exists (x_i)_{i = 0}^l  \in \cP|_W \cup \{x, \zero\} 
\]
in the definition of $\cA_x, \cA_y$ and $\cA_z$, respectively, to the condition that 
\[
		\exists (x_i)_{i = 0}^l  \in \cP|_W \cup \{x, y, z, \zero\}. 
\]
We can show that the probabilities of $\cA_x', \cA_y'$ and $\cA_z'$ also satisfy analogous  estimates as those for $\cA_x, \cA_y$ and $\cA_z$ in Lemma~\ref{prop:expdecay}, that is, there exist positive constants $C_3, \epsilon_3$, such that
\begin{equation}
 \Prob(  \cA_{u}') \leq C_3 |u|_{\infty}^{-(3d+\epsilon_3)}, \quad u \in \{x, y, z\}.
\end{equation}
Together with the following inequality
\[
	\Prob(\cA_x' \cap \cA_y' \cap \cA_z') \le \min\{\Prob(\cA_x'), \Prob(\cA_y'), \Prob(\cA_z')\} \le \Prob(\cA_x')^{1/3}\Prob(\cA_y')^{1/3}\Prob(\cA_y')^{1/3},
\]
we deduce that 
\[
	\Ex_1[\xi_3(x, y, z; \cP|_W)] = \Prob(\cA_{x, y, z} )  \le \max\{1, C |x|^{-d + \epsilon} |y|^{-d + \epsilon} |z|^{-d + \epsilon}\} =: M(x, y, z), 
\]
for some constants $C$ and $\epsilon$. Note that the function $M$ is integrable over $(\R^d)^3$. Therefore,
\begin{align*}
	\int_{(W)^3} \Ex_1[\xi_3(x, y, z; \cP|_W)] \, dx dy dz & \le \int_{(W)^3} M(x, y, z)\, dx dy dz 	\\
	&\le\int_{(\R^d)^3} M(x, y, z) \, dx dy dz < \infty,
\end{align*}
implying the uniform boundedness of the expectation of the first sum follows. Similar argument yields the uniform boundedness of the expectation of the second sum in the estimate~\eqref{Delta3}. The moment condition \eqref{edt3} is proved. 
\end{proof}

\begin{proof}[Proof of Lemma \ref{prop:expdecay}] 
Let $W$ be a cube containing the vertex $\zero$. Let $u=(u_1,\ldots,u_d)$ be the point such that $W$ has the expression $W=\prod_{j=1}^d [u_j,u_j + h]$ with $h$ the size of $W$.  Since $o \in W$, we have $u_j \leq 0$ for all $j=1,\ldots,d$. Without loss the generality, we assume that $o$ is in the lowest conner of $W$, that is, $o \in \prod_{j=1}^d [u_j,u_j+\tfrac{h}{2}]$.

For each $J\subset [d]$, we denote by $u^J$ the vertex such that $u^J_j=0$ for $j \in J$ and $u^J_j=u_j$ for $j \in [d]\setminus J$. We claim that there exist  positive constants $c$ and $\{c_J,  J \subset [d] \}$,  such that for all $0 <t \leq h$, one has $ B^J_{u^J}(ct) \subset W$ for all $J \subset [d]$ and
 \begin{align} \label{bjzj}
 & \Big\{ \exists \,  (x_i)_{i=0}^l \subset W: x_0=o, |x_l|_{\infty}\geq t,  x_i \sim x_{i-1} \, \forall \, i=1,\ldots, l \Big \} \cap \cA_{\kappa} (u, c, t)^c \notag \\
 & \qquad \subset \bigcup_{J\subset [d]} \cA'_J(u^J,c_Jt),
\end{align}
where $\cA_{\kappa}(u, c, t)$ is defined as in \eqref{def-akp0} (including two more points $\{x, \zero\}$) and 
\begin{align*}
  \cA'_J(x,s)&= \Big \{    \exists \, \gamma = (y_i)_{i=0}^l \subset \cP:  y_i \in B^J_x(s), y_i \sim y_{i+1} \, \forall \, i=0,\ldots, l-1,  \notag \\   
  & \qquad  y_0 \in B^J_x(\tfrac{s}{16}), \,  y_l \in B^J_x(2s)  \setminus B^J_x(s) \Big \},
 \end{align*}
with $B_x^J(s)$ defined as in \eqref{bxjr}.
 
Assuming this claim for a moment, we return to estimate $\Prob(A_x)$. By Lemma~\ref{lem:comp}(i), Corollary \ref{corr:cC} and  the estimate \eqref{pb-akp},
 \begin{equation} \label{pce}
 \Prob(\cE^c) \leq  C|x|^{-(3d+\epsilon)}, 
 \end{equation} 
 for some $C, \epsilon>0$, where 
  \begin{equation*}
   \cE: = \Big\{ \cC_{\delta}(B^J_{u^J}(c_J|x|_{\infty})) \subset \cC_{\delta}(W) \subset \cC(W) \, \forall \, J \subset [d] \Big\} \cap \cA_{\kappa}(u, c, |x|_{\infty})^c.
  \end{equation*}
 Suppose that $\cA_{x}  \cap \cE$  happens. Then there exists  $\gamma=(x_i)_{i=0}^l \subset W \setminus \cC_{\delta}(W)$ such that $x_0=o, x_l =x, x_i \sim x_{i-1}$ for $i=1,\ldots, l$.   Hence, using \eqref{bjzj} we obtain that on $\cE$, there exists $J\subset [d]$, such that  the event $\cA_{\theta_{J}}(u^J, \tfrac{1}{16}, c_J|x|_{\infty})$ happens. Therefore, using Proposition \ref{prop:theta} we have
  \begin{equation*}
  \Prob(\cA_{x}  \cap \cE) \leq 2^d \max_{0 \leq j \leq d}  \theta_j(\tfrac{1}{16},c|x|_{\infty})\leq C|x|_{\infty}^{-(3d+\epsilon)},
  \end{equation*}
  for some $C,\epsilon >0$. Combining this with  \eqref{pce}, we obtain the desired estimate~\eqref{Ax}. 
  
 Now we show the relation \eqref{bjzj} for $d=2$, the proof for $d\geq 3$ is similar and hence is omitted. Here, we have $o\in u+[0,\tfrac{h}{2}]^2$ and $0<t\leq h$. We take $c=2^{-9}$. There are four cases corresponding to the relative position of $o$ and $u$ as follows.  
 
 First, if $o-u \in [0,\tfrac{t}{2^4}]^2$,  we consider $J=\varnothing$ and $c_{\varnothing}=2^{-1}$, $u^{\varnothing} =u$. Define $m=\inf \{i: x_i \notin  B^{\varnothing}_{u^\varnothing}(c_\varnothing t)  \}$ (recall that $ B^{\varnothing}_{u^\varnothing}(s) = u^\varnothing+ [0,2s]^2$). We have $x_m \in B^{\varnothing}_{u^\varnothing}(2c_\varnothing t)$, as $x_{m-1} \in B^{\varnothing}_{u^\varnothing}(c_\varnothing t)$ and $|x_m-x_{m-1}|_{\infty} \leq c t$ by $\cA_{\kappa}(u, c, t)^c$. Then since 
  $$o \in B^\varnothing_{u^\varnothing}(\tfrac{c_\varnothing t}{2^4}) \subset B^\varnothing_{u^\varnothing}(c_\varnothing t) \subset W \cap [-t,t]^2,$$  the path $(x_i)_{i=0}^{m}$ is a realization of $\cA'_{\varnothing}(u^{\varnothing},c_\varnothing t)$.
 
 Second, if $o-u \in [\tfrac{t}{2^4},\tfrac{h}{2}] \times [0,\tfrac{t}{2^8}]$, then consider $J=\{1\}, u^{\{1\}}=(0,u_2),  c_{\{1\}}=2^{-4}$ and observe that 
 $$o \in B^{\{1\}}_{u^{\{1\}}}(\tfrac{c_{\{1\}}t}{2^4}) \subset B^{\{1\}}_{u^{\{1\}}}(c_{\{1\}} t) \subset W \cap [-t,t]^2.$$
 Then by the same argument as in the first case, we have  $\cA'_{\{1\}}(u^{\{1\}},c_{\{1\}} t)$ happens. 
 
 Third, if $o-u \in [0,\tfrac{t}{2^8}] \times [\tfrac{t}{2^4},\tfrac{h}{2}]$, using the same argument as above, we have $\cA'_{\{2\}}(u^{\{2\}},c_{\{2\}} t)$ occurs with $c_{\{2\}}=2^{-4}$.
 
 Finally, if the above three cases do not hold, then $u_1,u_2 \leq -\tfrac{t}{2^8}$ and hence
  $$o \in B^{[2]}_{u^{[2]}}(\tfrac{c_{[2]}t}{2^4}) \subset B^{[2]}_{u^{[2]}}(c_{[2]} t) \subset W \cap [-t,t]^2,$$
 with  $u^{[2]}=o, c_{[2]} =2^{-8}$. We then can conclude that $\cA'_{[2]}(u^{[2]},c_{[2]} t)$ happens. The proof of \eqref{bjzj} is completed.
\end{proof}

\subsection{Proof of Proposition \ref{prop:theta}} \label{ssec:tht}
For any $t \geq 2s>0$,  $\alpha \in (0,1/4)$, and $x \in \R^d$, recall $\beta(t)$ and $\nu(s,t)$ and $\kappa(\alpha,t)$ from Sub-section~\ref{ssec:renor}.

The key to the proof of Proposition \ref{prop:theta} is the following recursive relation, which is inspired by the ideas in the study of Boolean percolation in \cite{DRT, Go}.
\begin{lemma} [Renormalization estimate] \label{lem:renorm}
	There exists  a positive constant $K\geq 1$,  such that 
	\[ \theta(\tfrac{1}{16},t) \leq \kappa (\tfrac{1}{2^{10}},t) + K\nu(\tfrac{t}{2^7},t) + K \left(\theta(\tfrac{1}{16}, \tfrac{t}{2^3})+\theta(\tfrac{1}{16}, \tfrac{t}{2^7})\right)^2. \]
\end{lemma}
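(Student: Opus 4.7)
The plan follows the usual renormalisation strategy for subcritical continuum percolation in the spirit of Duminil-Copin--Raoufi--Tassion and Go. By translation invariance I set $x=\zero$. I will show that, up to the bad events counted by $\kappa(1/2^{10},t)$ and $K\nu(t/2^7,t)$, every witness path $\gamma=(x_i)_{i=0}^l$ of $\cA_{\theta_J}(\zero,t/16,t)$ can be dissected into two first-exit sub-paths at smaller scales which live on disjoint regions of the marked Poisson process and hence behave independently, giving the product structure inside the square in the third term.

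First, if some edge $\{x_i,x_{i+1}\}$ of $\gamma$ has $\infty$-length at least $t/2^{10}$, then $\cA_{\kappa}(y,1/2^{10},t)$ occurs for some $y\in B^J_{\zero}(t)$; a union bound over the $O(1)$ positions of $y$, absorbed in $K$, delivers the first term. Otherwise, tile $B^J_{\zero}(t/16)$ by an $O(1)$-size grid of base-points $\{z\}$ and introduce, for each $z$, the cluster-comparison event $\cE_z=\{\cC_{\delta}(B^\infty_z(t/2^7))\subset\cC_{\delta}(B^J_{\zero}(t))\}$. A union bound gives $\Prob(\bigcup_z\cE_z^c)\le K\nu(t/2^7,t)$ (identifying $B^J_{\zero}(t)$ with a translate of a centred box where necessary), yielding the second term. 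From now on I restrict to $\bigcap_z\cE_z$ and to $\gamma$ with every edge of $\infty$-length strictly less than $t/2^{10}$.

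On this remaining event, pick the grid point $z_1$ with $x_0\in B^\infty_{z_1}(t/2^{11})$ and set $i_1=\min\{i:\,x_i\notin B^\infty_{z_1}(t/2^7)\}$. The short-edge constraint forces $x_{i_1}$ into $B^\infty_{z_1}(t/2^6)\setminus B^\infty_{z_1}(t/2^7)$, and $\cE_{z_1}$ makes the sub-path $(x_0,\dots,x_{i_1})$ avoid $\cC_{\delta}(B^\infty_{z_1}(t/2^7))$; hence it realises $\cA_{\theta_{[d]}}(z_1,t/2^{11},t/2^7)$, of probability $\theta(1/16,t/2^7)$. The remainder $(x_{i_1},\dots,x_l)$ must still reach $\partial B^J_{\zero}(t)$, a distance at least $t/2$; letting $i_2=\min\{i>i_1:\,|x_i-z_1|_\infty\ge t/2\}$ places $x_{i_2}$ well inside $B^J_{\zero}(t)$, and a grid point $z_2$ near $x_{i_2}$ allows a second first-exit construction of the sub-path $(x_{i_2},\dots)$ out of $B^\infty_{z_2}(t/2^3)$, realising $\cA_{\theta_{[d]}}(z_2,t/2^7,t/2^3)$ of probability $\theta(1/16,t/2^3)$. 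Because $|z_2-z_1|_\infty$ is of order $t/2$ while the two first-exit balls have radii at most $t/8$, the balls are disjoint with buffer at least $t/2^{10}$, which together with the short-edge assumption rules out any edge connecting them; the two sub-path events then depend on the Poisson--mark randomness in disjoint regions and are independent.

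The product bound on the remaining event therefore reads $K\,\theta(1/16,t/2^7)\,\theta(1/16,t/2^3)\le K(\theta(1/16,t/2^3)+\theta(1/16,t/2^7))^2$, giving the third term. The main obstacle I anticipate is ensuring that the disjoint-region independence is not spoilt by boundary/overlap issues in the constructions of $z_1,z_2,i_1,i_2$: one must verify that the cluster $\cC_{\delta}(B^\infty_{z_2}(t/2^3))$ is also comparable to $\cC_{\delta}(B^J_{\zero}(t))$, a secondary $\nu$-event that I absorb by monotonicity of $\nu(s,t)$ in $s$ into the existing $K\nu(t/2^7,t)$, and one must handle the cases $J\ne[d]$ where $B^J_{\zero}(t)$ is only a translate of a centred box, requiring a comparison step before the first-exit arguments apply, which again only introduces constants absorbed in $K$.
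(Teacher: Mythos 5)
Your overall strategy is the right one and matches the paper's in spirit: peel off a long-edge event ($\kappa$), peel off cluster-comparison events ($\nu$), and then dissect the surviving path into two sub-crossings supported on disjoint regions to get the squared term. For $J=[d]$ your decomposition (first exit from a $t/2^7$-ball around the start, then a first exit from a $t/2^3$-ball centred where the path first reaches distance $t/2$) is a workable alternative to the paper's, which instead covers two deterministic concentric shells (the relevant parts of $\partial B^J_x(t/4)$ and $\partial B^J_x(3t/4)$, separated by distance of order $t$) by $O(1)$ squares and extracts one first-exit event from each shell. The scales $t/2^3$ and $t/2^7$ and the resulting bound come out the same either way.

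The genuine gap is in the cases $|J|<d$, which are not optional: the lemma is about $\theta=\max_j\theta_j$, and the application (Lemma \ref{prop:expdecay}) needs all $j$. For $J\ne[d]$ the starting region $B^J_\zero(t/16)$ sits in a corner of the domain $B^J_\zero(t)$, so your very first exit ball $B^\infty_{z_1}(t/2^7)$ is \emph{not} contained in $B^J_\zero(t)$; then the comparison event $\cE_{z_1}=\{\cC_\delta(B^\infty_{z_1}(t/2^7))\subset\cC_\delta(B^J_\zero(t))\}$ is vacuously false (the small cluster contains points outside the big box), and the sub-path cannot realise a full-box event $\cA_{\theta_{[d]}}$. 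The same problem recurs at the second exit whenever $x_{i_2}$ lies near a face of $B^J_\zero(t)$. The paper's fix is precisely the family of half-open boxes $B^{J'}_x(r)$ anchored at the faces, so that the face-adjacent sub-crossings realise events $\cA_{\theta_{J'}}$ with $|J'|<d$ (the corner squares $S_{i,a},S_{i,c}$ at scale $t/2^3$ versus the interior squares $S_{i,j}$ at scale $t/2^7$); this is why the recursion must be run simultaneously over all $j$ and is the technical heart of the proof. Your closing sentence treats this as a translation/comparison issue absorbed into $K$, which it is not: it changes the \emph{type} of the sub-event, not just a constant. A secondary loose end: your second comparison event lives at scale $t/2^3$, and ``monotonicity of $\nu(s,t)$ in $s$'' is not established (the clusters $\cC_\delta(B_y(s))$ for different $s$ are not nested), though this is harmless for the downstream use since only the exponential upper bound on $\nu$ is ever used.
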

\begin{proof}   We have to prove that for al $0 \leq j \leq d$,
	\begin{equation} \label{receq}
	 \theta_j(\tfrac{1}{16},t) \leq \kappa (\tfrac{1}{2^{10}},t) + K\nu(\tfrac{t}{2^7},t) + K \left(\theta(\tfrac{1}{16}, \tfrac{t}{2^3})+\theta(\tfrac{1}{16}, \tfrac{t}{2^7})\right)^2.
	\end{equation}
For simplicity, we prove here the case $j=0$ and $d=2$ because the proof of general cases is essential the same.

We recall
 \begin{equation}
\theta_0(\tfrac{1}{16},t) = \Prob(\cA_{\theta_{\varnothing}}(o,\tfrac{t}{16},t)),
\end{equation}  
where $\cA_{\theta_J}(s,t)$ and $B^J(x,r)$ are defined as in \eqref{ajst} and \eqref{bxjr}.
  
	We call $A=(2t,0), B=(2t,2t), C=(0,2t)$ and $A_1=(\tfrac{t}{2},0), B_1=(\tfrac{t}{2},\tfrac{t}{2}), C_1=(0,\tfrac{t}{2})$ and $A_2=(\tfrac{3t}{2},0), B_2=(\tfrac{3t}{2},\tfrac{3t}{2}), C_2=(0,\tfrac{3t}{2})$. Then we cover the segments $A_1B_1 \cup B_1C_1$ by squares $S_{1,a}, S_{1,c}, S_{1,1}, \ldots,S_{1,K_1}$, where $S_{1,a}, S_{1,c}$ has the length size $t/2^6$ and are adjacent to $OA, OC$ respectively, and $S_{1,1},\ldots,S_{1,K_1}$ have the length size $t/2^{10}$. Similarly, we cover the segments $A_2B_2 \cup B_2C_2$ by squares $S_{2,a}, S_{2,c},$ $S_{2,1}, \ldots,S_{2,K_2}$. See Figure~\ref{OABC} for an illustration of the cover. 
\begin{figure}[ht]
	\centering
	\includegraphics[width = 11cm]{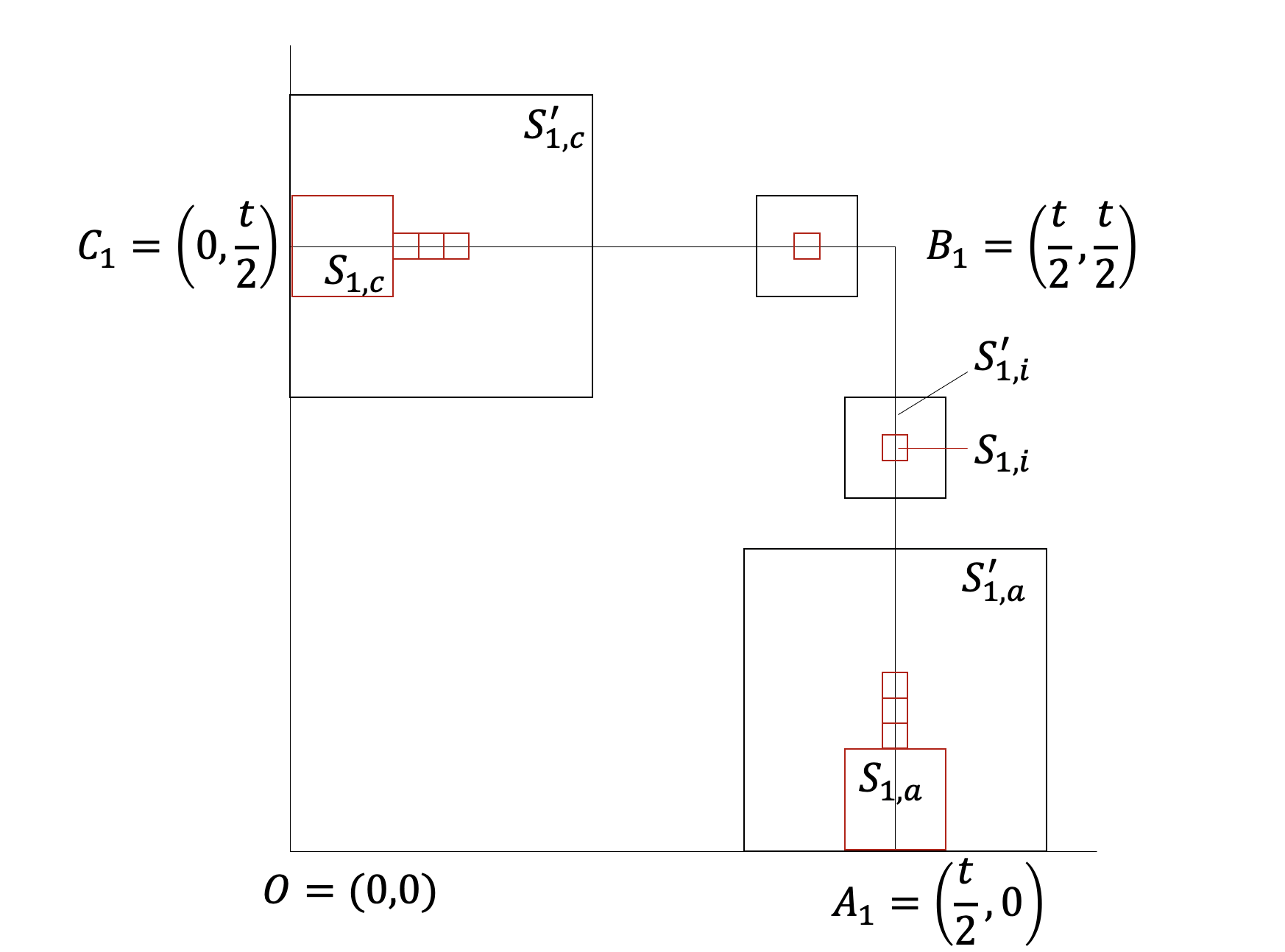}
	\caption{Illustration of the cover of $A_1B_1 \cup B_1 C_1$}\label{OABC}
\end{figure}
Notice that for $i=1,2$,
	\begin{equation}
	S_{i,a} = B_{A_i}^{\{1\}}(\tfrac{t}{2^7}), \quad S_{i,c} = B_{C_i}^{\{2\}}(\tfrac{t}{2^7}),  \quad S'_{i,a}, S'_{i,c} \subset S,
	\end{equation}
where 
  \begin{equation}
 S=[0,2t]^2, \quad  S'_{i,a} := B_{A_i}^{\{1\}}(\tfrac{t}{2^3}), \quad S'_{i,c} := B_{C_i}^{\{1\}}(\tfrac{t}{2^3}).
  \end{equation}
Similarly,  if we call $y_{i,j}$ with $i=1, 2$ and $1 \leq j \leq K_i$, the center of the squares $S_{i,j}$, then 
  \begin{equation}
S_{i,j}:= B_{y_{i,j}}(\tfrac{t}{2^{11}}), \quad S'_{i,j}:= B_{y_{i,j}}(\tfrac{t}{2^7}) \subset S.
  \end{equation}
Define
\begin{align*}
	 \cA_{\nu} (t) &=& \{ \cC_{\delta}(S'_{i,a}), \cC_{\delta}(S'_{i,c}), \cC_{\delta}(S'_{i,j}) \subset \cC_{\delta}(S), \, \forall \,  i=1,2; 1 \leq j \leq K_i \}.
	\end{align*}
We then claim that 
 \begin{equation} \label{claimA}
 \cA_{\theta_{\varnothing}}(o,\tfrac{t}{16},t) \cap \cA_{\kappa}(o,2^{-10},t)^c \cap \cA_{\nu}(t)  \subset \cB_1(t) \cap \cB_2(t),
 \end{equation}
 where  $\cA_{\kappa}(o, 2^{-10},t)$ is defined as \eqref{def-akp0}, and for $i=1,2$,
   \begin{align} 
   \cB_i(t) = \cA_{\theta_{\{2\}}}(A_i,\tfrac{t}{2^7},\tfrac{t}{2^3}) \cup \cA_{\theta_{\{1\}}}(C_i,\tfrac{t}{2^7},\tfrac{t}{2^3}) \bigcup_{j=1}^{K_i} \cA_{\theta_{[2]}}(y_{i,j},\tfrac{t}{2^{11}},\tfrac{t}{2^7}).
   \end{align}     
 Assuming this claim for a moment, we prove the lemma. Notice that  $\cB_1(t)$ depends only on the configuration of the graph inside $[0,\tfrac{3t}{4}]^2$, whereas $\cB_2(t)$ is measurable to the  the configuration of the graph in $ [0,\tfrac{7t}{4}]^2 \setminus [0,\tfrac{5t}{4}]^2$. Hence, the two events are independent, and thus \eqref{claimA} gives that
    \begin{align*}
    \theta_0(\tfrac{1}{16},t) &=\Prob(\cA_{\theta_{[2]}}(o,\tfrac{t}{16},t) ) \leq \Prob (\cA_{\kappa}(o,2^{-10},t)) + \Prob (\cA^c_{\nu}(t)) + \Prob(\cB_1(t)) \Prob(\cB_2(t))\\
    &\leq \kappa(\tfrac{1}{2^{10}},t) + (K_1+K_2+4) \nu(\tfrac{t}{2^7},t) +   \Prob(\cB_1(t)) \Prob(\cB_2(t)).
    \end{align*}    
Moreover, by the union bound, for $i=1,2$,
  \begin{equation}
  \Prob(\cB_i(t)) \leq (2\theta_1(\tfrac{1}{16},\tfrac{t}{2^3})+K_i\theta_2(\tfrac{1}{16},\tfrac{t}{2^7})).
  \end{equation}
 Combining the last two estimates, we obtain \eqref{receq} with $K=(K_1+2)(K_2+2)$.
 
  Now it remains to show \eqref{claimA}. Suppose that $\cA_{\theta_{\varnothing}}(o,\tfrac{1}{16},t) \cap \cA_{\kappa}(t) \cap \cA_{\nu}(t)$ happens. Then there exists a path $\gamma=(x_i)_{i=0}^l$ such that $x_0 \in [0,\tfrac{t}{2^3}]^2$ and   $x_i \in [0,2t]^2 \setminus \cC_{\delta}([0,2t]^2)$ for $i=0,\ldots, l-1$ and $x_l \notin [0,2t]^2$ with positive coordinates. 
  
Since $S_{1,a}, S_{1,c},  S_{1,j}, j=1,\ldots,K_1$ is a cover of $A_1B_1 \cup B_1C_1$ and by the assumption on $\cA_{\kappa}(o,2^{-10}, t)^c$, $|x_i-x_{i+1}|_{\infty} \leq 2^{-10}t$ for all $i=0,\ldots,l-1$,   there must be some vertices of the path $\gamma$ lying in these cubes. So we can define
        \[ l_1 = \min \bigg\{ i \in [1,l-1]: x_i  \in S_{1,a} \cup S_{1,c} \cup  \bigcup_{j=1}^{K_1} S_{1,j} \bigg\}.\]
Suppose that $x_{l_1} \in S_{1,a}$.         
 Define also
  \[ l_1' = \min \{i \geq l_1: x_i \notin S_{1,a}' \}. \]   
 Then by the definition,
   \[ x_{l_1} \in S_{1,a} =B^{\{1\}}_{A_1} (\tfrac{t}{2^7}), \quad x_i \in S_{1,a}' = B^{\{1\}}_{A_1} (\tfrac{t}{2^3}) \, \, \forall i= l_1+1, \ldots, l_1'-1, \]
and  since $\cA_{\kappa}(o, 2^{-10},t)^c$ holds,
 $$x_{l_1'} \in B^{\{1\}}_{A_1}(\tfrac{t}{2^3}+\tfrac{t}{2^{10}}) \setminus B^{\{1\}}_{A_1} (\tfrac{t}{2^3})  \subset B^{\{1\}}_{A_1}(\tfrac{t}{2^2}) \setminus B^{\{1\}}_{A_1} (\tfrac{t}{2^3}).$$ 
   Moreover, we notice that  by $\cA_{\nu}(t)$,
   $$x_i \notin \cC_{\delta}(S'_{1,a}) \, \, \forall i=l_1, \ldots, l_1'-1.$$ 
 Hence,  $(x_i)_{i=l_1}^{l_1'}$ is   a realization for  the event   $  \cA_{\theta_{\{2\}}}(A_1,\tfrac{t}{2^7},\tfrac{t}{2^3})$. The cases that $x_{l_1} \in S_{1,c}$ and $x_{l_1} \in S_{1,j}$ for some   $j=1,\ldots,K_1$ can be treated similarly, leading to the realizations of events $\cA_{\theta_{\{1\}}}(C_1,\tfrac{t}{2^7},\tfrac{t}{2^3})$ and $\cA_{\theta_{[2]}}(y_{1,j},\tfrac{t}{2^{11}},\tfrac{t}{2^{7}})$, respectively. In summary, the event $\cB_1$ happens. 
     
    By the same argument, we can also prove that the vent $\cB_2$ happens and the proof of \eqref{claimA} completes.
\end{proof}

\begin{lemma} \label{lem:abt}
	Let $(a_t)_{t\geq 0}, (b_t)_{t\geq 0} \subset \R_+$, $0<\epsilon_1<\epsilon_2<1$, and $K\geq 1$, $t_0 \geq \tfrac{\epsilon_2}{1-\epsilon_2}$ satisfy for all $t\geq t_0$
	\begin{itemize}
		\item [\rm(i)] $ a_{t} \leq b_{t} + K(a_{\epsilon_1t}+a_{\epsilon_2 t})^2, $
		\item[\rm(ii)] $4K(b_{\epsilon_1 t}+b_{\epsilon_2 t})^2 \leq b_t,$ 
		 \item[\rm(iii)] $a_t \leq 2 b_t$, for all $t_0\leq t \leq t_0/\epsilon_1$.	
	\end{itemize}
 Then, $a_t \leq 2 b_t$ for all $t\geq t_0$.
\end{lemma}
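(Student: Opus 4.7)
The plan is to prove the inequality by an iteration on dyadic-type intervals, using (iii) as the base case and the combination of (i) with (ii) for the induction step. Specifically, I would introduce the sequence $T_n := t_0/(\epsilon_1 \epsilon_2^n)$ for $n \ge 0$ and prove by induction on $n$ the statement
\[
 P(n): \quad a_t \le 2 b_t \text{ for all } t \in [t_0, T_n].
\]
Since $\epsilon_2 \in (0,1)$, we have $T_n \nearrow \infty$, so this will yield the conclusion.

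The base case $P(0)$ reduces to $a_t \le 2 b_t$ on $[t_0, t_0/\epsilon_1]$, which is exactly hypothesis (iii). For the induction step, assuming $P(n)$, I take $t \in (T_n, T_{n+1}]$ and first verify the key compatibility: $\epsilon_1 t$ and $\epsilon_2 t$ both lie in $[t_0, T_n]$ so that the induction hypothesis applies to them. Indeed, using $\epsilon_1 < \epsilon_2 < 1$ together with $t > T_n \ge t_0/\epsilon_1$, one checks
\[
 \epsilon_i t > \epsilon_i \cdot \frac{t_0}{\epsilon_1} \ge t_0 \quad (i=1,2), \qquad \epsilon_i t \le \epsilon_2 T_{n+1} = T_n \quad (i=1,2),
\]
so $\epsilon_1 t, \epsilon_2 t \in [t_0, T_n]$ as required.

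Now I combine (i), the induction hypothesis and (ii) in the obvious chain:
\[
 a_t \;\le\; b_t + K(a_{\epsilon_1 t} + a_{\epsilon_2 t})^2 \;\le\; b_t + K\bigl(2 b_{\epsilon_1 t} + 2 b_{\epsilon_2 t}\bigr)^2 \;=\; b_t + 4K(b_{\epsilon_1 t} + b_{\epsilon_2 t})^2 \;\le\; b_t + b_t = 2 b_t,
\]
which establishes $P(n+1)$. This closes the induction. Since every $t \ge t_0$ lies in some $[t_0, T_n]$, we obtain $a_t \le 2 b_t$ for all $t \ge t_0$.

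There is no serious obstacle: the argument is a clean bootstrap once one picks the right scale $T_n = t_0/(\epsilon_1 \epsilon_2^n)$ that guarantees the rescaled arguments $\epsilon_1 t$ and $\epsilon_2 t$ stay inside the previous interval. The constant factor $2$ in the target bound $a_t \le 2 b_t$ is precisely what makes the squared term absorb into $b_t$ via the factor $4K$ in hypothesis (ii); any smaller constant would break the step, and any larger constant would still work but give a weaker conclusion. The assumption $t_0 \ge \epsilon_2/(1-\epsilon_2)$ plays no role in my argument and seems only to be carried along for use elsewhere in the paper.
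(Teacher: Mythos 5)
Your proof is correct, and the core of the argument---the chain $a_t \le b_t + K(a_{\epsilon_1 t}+a_{\epsilon_2 t})^2 \le b_t + 4K(b_{\epsilon_1 t}+b_{\epsilon_2 t})^2 \le 2b_t$ fed into an induction anchored at (iii)---is exactly the paper's. The only difference is the partition of $[t_0,\infty)$ driving the induction: the paper inducts over unit-length intervals $[t_0/\epsilon_1+k,\,t_0/\epsilon_1+k+1]$, whereas you use the geometric scales $T_n = t_0/(\epsilon_1\epsilon_2^{n})$, for which the verification that $\epsilon_1 t,\epsilon_2 t$ fall in the previously covered range is immediate. Your closing remark is also accurate and pinpoints the one substantive payoff of your variant: the hypothesis $t_0 \ge \epsilon_2/(1-\epsilon_2)$ is precisely what the paper's unit-step induction needs to guarantee $\epsilon_2 t \le t_0/\epsilon_1 + k$ for $t \le t_0/\epsilon_1+k+1$, and your geometric partition renders it superfluous, so your version of the lemma holds under marginally weaker assumptions.
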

\begin{proof}
By assumption (iii), we need to show $a_{t} \leq 2b_t$ for all $t\geq t_0/\epsilon_1$. We prove by induction in $k$ that this claim holds for $[t_0/\epsilon_1 +k, t_{0}/\epsilon_1 +k+1]$. Let $t$ be in this interval. Then
  \begin{align}
  a_t \leq b_{t} + K(a_{\epsilon_1t}+a_{\epsilon_2 t})^2 \leq b_t + K(2b_{\epsilon_1t}+2b_{\epsilon_2 t})^2 \leq 2b_t.
  \end{align} 
 Here, we used (i) and (ii) for the first and third inequalities respectively and for the second one, we used the induction hypothesis with noting that for $t \in [t_0/\epsilon_1 +k, t_{0}/\epsilon_1 +k+1]$, one has $t_0<\epsilon_1 t< \epsilon_2 t< t_0/\epsilon_1 + k$. 
\end{proof}

\begin{proof}[Proof of Proposition \ref{prop:theta}]

Let $K$ be as in Lemma \ref{lem:renorm}. Define 
  $$a_t = \theta(\tfrac{1}{16},t), \quad b'_t=\kappa (\tfrac{1}{2^{10}},t) + K\nu(\tfrac{t}{2^7},t), \quad \epsilon_1=2^{-7}, \epsilon_2=2^{-3}.$$
  Then by Lemma \ref{lem:renorm}, for all $t\geq 0$,
   \begin{equation*} 
   a_{t} \leq b'_{t} + K (a_{\epsilon_1 t} +a_{\epsilon_2t})^2.
   \end{equation*}
Moreover, by \eqref{pb-akp} and Lemma \ref{lem:comp} (i), 
  \begin{equation*}
  b'_t \leq Ct^{-(3d+\epsilon_0)},
  \end{equation*}   
   for some $C>0$ and $\epsilon$ as in (C2). Hence,  
  \begin{equation} \label{abtk}
  a_{t} \leq b_{t} + K (a_{\epsilon_1 t} +a_{\epsilon_2t})^2,
  \end{equation} 
 with $b_t=Ct^{-(3d+\epsilon_0)}$.  There exists  $t_0 =t_0(\epsilon_0,\epsilon_1,\epsilon_2,K,C)>0$, such that for all $t\geq t_0$
  \begin{equation}
  4K (b_{\epsilon_1 t} +b_{\epsilon_2t})^2 \leq b_t.
  \end{equation} 
 It is clear that $\theta(\tfrac{1}{16},t) \leq \Prob(|G(\cP|_{[-t,t]^d})| \neq |\cC_{\delta}([-t,t]^d)|) \rightarrow 0$ as $\lambda \rightarrow \infty$, by Lemma~\ref{lem:comp}(ii). Thus  for all $\lambda$ large enough
  \begin{equation} \label{lbat}
  a_t \leq 2 b_t, \quad \forall \, t_0\leq t \leq t_0/\epsilon_1.
  \end{equation} 
   Combining the last three estimates and   Lemma  \ref{lem:abt}, we get  that for all $t\geq t_0$
  \[ a_t \leq 2b_t \leq 2Ct^{-(3d+\epsilon)}, \]
which completes the proof.
\end{proof}

\appendix
\section{Quenched CLT}
Let us first recall the setting of the quenched CLT. The underlying probability space is written as the product 
\[
(\Omega, \cF, \Prob) = (\Omega_1, \cF_1, \Prob_1) \times (\Omega_2, \cF_2, \Prob_2)
\] for which the first component of $\hat \eta$ is defined on $\Omega_1$, and the second and the third ones are defined on $\Omega_2$, that is,
\[
\hat \eta(\omega) = \{(x(\omega_1), t(\omega_2), M(\omega_2))\}.
\]
We will use $\Ex_i$ and $\Var_i, (i = 1,2)$ to denote the expectation and the variance with respect to $\Prob_i$. Let $W_2$ be the second Wasserstein distance in the space of probability measures on $\R$ having finite second moment
\[
W_2(\mu, \nu)^2 = \inf_{\gamma \in \Gamma(\mu, \nu)} \iint_{\R \times \R} |x - y|^2 d\gamma(x, y),
\]
where $\Gamma(\mu, \nu)$ is the collection of all measures on $\R^2$ having $\mu$ and $\nu$ as marginal distributions. It is known that the convergence of probability measures under $W_2$ is equivalent to the convergence in distribution plus the convergence of the second moment. Moreover, for two mean-zero random variables $X$ and $Y$ defined on the same probability space, it follows directly from the definition of the distance that  
\[
W_2(X, Y)^2 \le \Ex[(X - Y)^2] = \Var[X - Y].
\]
Assume that the functional $f$ satisfies the conditions in Theorem~\ref{thm:main}. 
Let 
\[
Z_n(\omega_1, \omega_2)=\frac{f(T(\hat \eta|_{W_n})) - \Ex_2[f(T(\hat \eta|_{W_n}))]}{\sqrt{n}},
\]
where $W_n := [-n^{1/d}/2, n^{1/d}/2)^d$. Then for each fixed $\omega_1 \in \Omega_1$, $Z_n$ is a random variable on $\Omega_2$ of mean zero.
We assume in addition that the $(2+\delta)$th moment of $Z_n$ is finite for any $n > 0$, that is, for some $\delta > 0$,
\[
\Ex[|Z_n|^{2 + \delta}] < \infty, \quad \text{for all $n$}.
\]
Then there exists $\sigma_q^2 \ge 0$ such that for any $\varepsilon > 0$,
\begin{equation} \label{qclt-zn}
\Prob_1\left( \omega_1 :  W_2(Z_n(\omega_1, \cdot), \Normal(0,\sigma_q^2)) \ge \varepsilon  \right)   \to 0\quad \text{as}\quad n \to \infty.
\end{equation}
In particular, w.h.p. $(Z_n(\omega_1, \cdot))_{n\geq 1}$ converges weakly to $\Normal(0,\sigma_q^2)$.  

Let us prove the above statement. We will use the notations in the proof of Theorem~\ref{thm:main}. Define $Y_{n,L} = Y_{n,L}(\omega_1, \omega_2)$ as
\[ 
Y_{n,L} := \frac{1}{\sqrt n}\sum_{i = 1}^{\ell_n} \Big(   f(T(\hat \eta|_{C_i})) - \Ex_2[f(T(\hat \eta|_{C_i}))] \Big) =: \frac{1}{\sqrt n}\sum_{i = 1}^{\ell_n} f_i.
\]
Then for fixed $\omega_1$, under $\Prob_2$,  $Y_{n, L}$ is a sum of independent random variables with
\[
\Var_2[Y_{n, L}] = \frac{1}{n} \sum_{i = 1}^{\ell_n} \Var_2[f_i].
\]
Note that the sequence $\{\Var_2[f_i]\}_{i = 1}^{\ell_n}$ is i.i.d.\ under $\Prob_1$. Then the strong law of large numbers implies that for almost surely $\omega_1 \in \Omega_1$, as $n \to \infty$,
\[
\Var_2[Y_{n, L}] = \frac{\ell_n}{n} \frac{1}{\ell_n} \sum_{i = 1}^{\ell_n} \Var_2[f_i] \to \frac{\sigma_L'^2}{L}, \quad \sigma_L'^2 := \Ex_1[\Var_2[f_i]].
\]
Similarly, by the finiteness of the $(2+\delta)$th moment, we obtain that for almost surely $\omega_1 \in \Omega_1$, as $n \to \infty$,
\[
\frac{1}{n}\sum_{i = 1}^{\ell_n}  \Ex_2[|f_i|^{2+\delta}] \to \frac{1}{L} \Ex_1[\Ex_2[|f_i|^{2+\delta}]] = \Ex[|f_i|^{2+\delta}] < \infty.
\]
Then for almost surely $\omega_1 \in \Omega_1$ (those $\omega_1$ such that the above two equations hold), by using Lyapunov's central limit theorem (see \cite[Theorem 27.3]{Billingsley}), we obtain that
\begin{equation}\label{CLT-fixed-L}
W_2(Y_{n, L}(\omega_1, \cdot), \Normal(0, \sigma_L'^2/L) ) \to 0 \quad \text{as}\quad n \to \infty.
\end{equation}
Note that $\sigma_L'^2$ may be zero.

Observe that for any random variable $X$ defined on $\Omega$ with finite second moment, 
\begin{equation}\label{conditional-variance}
\Ex_1[\Var_2[X - \Ex_2[X]]] = \Ex_1[\Ex_2[X^2] - \Ex_2[X]^2] \le \Ex[X^2] - \Ex[X]^2 = \Var[X].
\end{equation}
This implies that $\sigma_L'^2 \le \sigma_L^2$, and thus the sequence $\{\sigma_L'^2/L\}$ is bounded. Let $\sigma_q^2$ be a limit of $\{\sigma_L'^2/L\}$, that is, for some subsequence $\{L_k\}$ tending to infinity, 
\[
\sigma_q^2 = \lim_{k \to \infty}  \frac{\sigma_{L_k}'^2} {L_k}.
\]
We are going to show that for this $\sigma_q^2$, the quenched central limit theorem~\eqref{qclt-zn} holds. (And thus $\sigma_q^2$ is unique as a consequence.)  
It follows from the observation~\eqref{conditional-variance} and the estimate~\eqref{CLT-approximation} that 
\begin{equation}\label{approximation-2}
\lim_{L \to \infty} \limsup_{n \to \infty}\Ex_1 \left[ \Var_2\left[ Z_n  - {Y_{n, L}} \right]	 \right]  = 0.
\end{equation}
This is a key estimate to show our result.

Next, by the triangle inequality, we see that 
\begin{align*}
W_2(Z_n(\omega_1, \cdot), \Normal(0, \sigma_q^2)) &\le W_2(Z_n, Y_{n, L} ) + W_2(Y_{n, L} , \Normal(0, \sigma_L'^2 / L)) \\
&\quad + W_2(\Normal(0, \sigma_L'^2 / L),   \Normal(0, \sigma_q^2)).
\end{align*}
Here for simplicity, we have removed $(\omega_1, \cdot)$ in formulae.
Let $\varepsilon > 0$ be given. By the definition of $\sigma_q^2$, when $k$ is large enough, for $L = L_k$,
\[
W_2(\Normal(0, \sigma_L'^2 / L),   \Normal(0, \sigma_q^2)) <\frac \varepsilon 3.
\]
For those $L$, the above triangle inequality implies that
\begin{align*}
&\Prob_1(W_2(Z_n, \Normal(0, \sigma_q^2)) \ge \varepsilon) \\
&\quad  \le \Prob_1\left(W_2(Z_n, Y_{n, L}) \ge \frac\varepsilon 3 \right) + \Prob_1 \left(W_2(Y_{n, L}, \Normal(0, \sigma_L'^2 / L)) \ge \frac\varepsilon 3\right). 
\end{align*}
Since as $n \to \infty$,  the second term goes to zero by the almost sure convergence~\eqref{CLT-fixed-L}, it follows that
\begin{align*}
\limsup_{n \to \infty} \Prob_1(W_2(Z_n, \Normal(0, \sigma_q^2)) \ge \varepsilon) &\le \limsup_{n \to \infty} \Prob_1\left(W_2(Z_n, Y_{n, L}) \ge \frac\varepsilon 3 \right) \\
&\le \limsup_{n \to \infty} \Prob_1\left(\Var_2 [Z_n - Y_{n, L}] \ge \frac{\varepsilon^2} 9 \right) \\
&\le \limsup_{n \to \infty} \frac{9}{\varepsilon^2} \Ex_1[\Var_2 [Z_n - Y_{n, L}]].
\end{align*}
Here we have used the inequality $W_2(X, Y) \le \Var[X-Y]$ for mean zero random variables $X$ and $Y$ defined on the same probability in the second line and Markov's inequality in the last line. The desired result immediately follows from the estimate~\eqref{approximation-2}.  \hfill $\square$

\section{Proof of Lemma \ref{lem:comp}}\label{appendix_proof}
\begin{proof}
For the convenience, let us recall the construction of the connected component $\cC_{\delta}(\Lambda)$. For each $\delta>0$, we tessellate the whole space $\R^d$ to cubes of size $\delta$ and call $\Gamma$  the collection of cubes.   Let $G_{\delta}$ be the random graph obtained from $G(\cP)$ by deleting the edges between vertices in non-adjacent cubes. 
		For each cube $B \in \Gamma$, when $B \cap \cP \neq \varnothing$, we take arbitrarily a point in $B \cap \cP \neq \varnothing$, say $x_B$, to be the representation of $B$.  Let $\textrm{Per}(\delta)$ be the induced subgraph of  $G_{\delta}$ restricted on the vertex set  $V=\{x_B: \, B \in \Gamma,  B \cap \cP =\varnothing \}$. Then for each cube $\Lambda$, we define
		\[  \cC^{{\rm per}}_{\delta} (\Lambda) = \textrm{ the biggest cluster of $\rm{Per}(\delta)|_{\Lambda}$},\]
		and
		\[ \cC_{\delta} (\Lambda) = \textrm{ the connected component of  $G_{\delta}|_{\Lambda}$ containing $\cC^{{\rm per}}_{\delta} (\Lambda)$}. \]

	If we consider each cube $B \in \Gamma$ as a point in $\Z^d$ and then we obtain a percolation. More precisely, each cube $B \in \Gamma$ is called open if $B \cap \cP \neq \varnothing$ and thus 
\[
	p=\Prob(\textrm{a cube is open}) =1-\exp(-\lambda \delta^d).
\]
	For any two adjacent open cubes $B_1$ and $B_2$, we draw an edge between them if there is an edge between their representations. In fact, the probability that the two open cubes are connected is
\[
	\varphi(x_{B_1}-x_{B_2}) \geq  q:= \inf _{x_1 \in B_1, x_2 \in B_2}\varphi (x_1-x_2).
\]
	Then,   $\textrm{Per}(\delta)$ can be viewed as   a bond percolation on $\Z^d$ with the following rule.  An edge $e=\{a,b\}$ is called open if both $a$ and $b$ are open and the edge between $a$ and $b$ is drawn. So $\textrm{Per}(\delta)$ is indeed a locally-dependent percolation (since the statuses of $e=\{a,b\}$ and $f=\{c,d\}$ are independent if $\{a,b\} \cap \{c,d\}=\varnothing$) with parameters 
	\[ p_e = \Prob(e \textrm{ is open}) \geq p^2q, \]
	for all the edges $e$.  	By \cite{LSS},  there exists $p'$, such that whence $p_e > p'$ for all edges $e$,  $\textrm{Per}(\delta)$ stochastically dominate the supercritical standard bond percolation on $\Z^d$. 
	
	By (C1),  we have $q \rightarrow 1$ as $\delta \rightarrow 0$. Moreover, $p\rightarrow 1$ when $\lambda \rightarrow \infty$ and $\delta$ is fixed. Therefore, we can find $\delta_0, \lambda_0$, such that if $\lambda > \lambda_0$, and $\delta=\delta_0$,
	\begin{equation} \label{pepp}
	p_e\geq p^2q \geq \tfrac{1+ p'}{2} > p', 
	\end{equation}
	for all edges $e$,	and thus  $\textrm{Per}(\delta)$ stochastically dominates the supercritical bond percolation. Thus  for all $t>0$
	\begin{align*}
		\beta_{\delta}(t) &= \Prob(\cC_{\delta}(B_o(t)) \not \subset \cC_{\delta}(\R^d)) = \Prob(\cC_{\delta}(B_o(t)) \cap  \cC_{\delta}(\R^d) = \varnothing)\\
		&\leq \Prob(\cC^{{\rm per}}_{\delta}(B_o(t)) \cap \cC^{{\rm per}}_{\delta}(\R^d) =\varnothing)   \leq \exp(-ct),   
	\end{align*}
	for some universal constant $c>0$. Here for the first inequality, we used the fact that ${\rm Per}(\delta)$ is a subgraph of $G_{\delta}$ and for  the last inequality we used  stochastic domination obtained above  and standard estimates in Bernoulli percolation on $\Z^d$, see e.g.\ \cite{G}.  By the same arguments, we can also prove the other estimate in (i). 
	
	We turn to prove (ii). Observe that $|G(\cP|_{\Lambda})|=|\cC_{\delta}(\Lambda)|$ when for any pair of adjacent small cubes, there exists an edge connecting them and all the subgraphs constrained in  cubes are connected. Hence, 
	\begin{align} 
	&\Prob(|G(\cP|_{\Lambda})| \neq |\cC_{\delta}(\Lambda)|) \leq \Prob(\exists B \in \Gamma|_{\Lambda}: G(\cP|_{B}) \textrm{ is not connected}) \notag\\
	& \qquad +\Prob(\exists B, B' \in \Gamma|_{\Lambda}: B, B'\textrm{ are adjacent and there is no edge between them}), \label{GnC}
	\end{align}
	where $\Gamma|_{\Lambda}$ is the set of  cubes that intersect with $\Lambda$. 
	
	Given two adjacent cubes $B$ and $B'$, let $X$ and $X'$ denote the number of vertices of $\cP$ in $B$ and $B'$ respectively. Then $X$ and $X'$ are i.i.d.\ random variables with the Poisson distribution of  mean $\lambda \delta^d$. Notice that each pair $\{x, y\in \cP|_B\}$, or each pair $\{z\in B, z' \in B'\}$ is connected with probability larger than $q \in (0,1]$.
	 Therefore,
	\begin{align*}
		& \Prob(G(\cP|_{B}) \textrm{ is not connected} \mid X) \leq \Prob(\exists \, x, y \in \cP_B: d(x,y) >2 \mid X) \\
		&\leq \Prob(\exists \, x, y \in \cP_B: \forall z \in \cP_B, x\not \sim z \textrm{ or } y \not \sim z \mid X) \\
		&\leq X(X-1)(1- q^2)^{(X-2)},
	\end{align*}
	where $d$ is the graph distance in $G(\cP|_{B})$. Moreover,  
	\begin{align*}
		&& \Prob(\textrm{there is no edge between } B, B' \mid X,X') \leq (1- q)^{XX'}.
	\end{align*}
	Combining the above inequalities with \eqref{GnC}, we obtain 
	\begin{align*}
		\Prob(|G(\cP|_{\Lambda})| \neq |\cC_{\delta}(\Lambda)|) \leq |\Gamma|_{\Lambda}| \E \left[X(X-1)(1-p_1^2)^{(X-2)} \right] + |\Gamma|_{\Lambda}|^2 \E\left[(1-p_2)^{XX'}\right].
	\end{align*}
	The right hand side of the above tends to $0$ as  $\lambda \rightarrow \infty$, since 
	$X$ and $X'$ are i.i.d.\ random variables with law $\textrm{Poi}(\lambda \delta^d)$.
\end{proof}

\bigskip
\noindent\textbf{Acknowledgment.} K.D.T is partially supported by JST CREST Mathematics (15656429) and JSPS KAKENHI Grant Number JP19K14547. The work of V.H.C is supported by NUS Research Grant R-155-000-208-112.

\begin{footnotesize}

\end{footnotesize}

%

\end{document}